\documentclass[12pt]{amsart}
\usepackage{epsfig,comment}
\usepackage{mathtools}
\usepackage[subnum]{cases}
\usepackage{enumerate}
\usepackage{bbm,bm}
\usepackage{amsmath,amssymb,mathrsfs}
\usepackage{cite}
\usepackage{mathtools}
\usepackage[font=small,labelfont=bf]{caption}
\usepackage{color}
\usepackage[bookmarks=true,
bookmarksnumbered=true, breaklinks=true,
pdfstartview=FitH, hyperfigures=false,
plainpages=false, naturalnames=true,
colorlinks=true,pagebackref=true,
pdfpagelabels,colorlinks=true,
	linkcolor=blue,
	citecolor=black,
	filecolor=black,
	urlcolor=black]{hyperref}
\usepackage{wasysym}
\usepackage{graphicx}

\usepackage{pgfplots}
\pgfplotsset{width=10cm,compat=1.9}
\usepgfplotslibrary{fillbetween}

\newcommand\xqed[1]{%
  \leavevmode\unskip\penalty9999 \hbox{}\nobreak\hfill
  \quad\hbox{#1}}
\newcommand\dssymb{\xqed{\small //}}

\newtheorem{theorem}{Theorem}[section]
\newtheorem{definition}[theorem]{Definition}
\newtheorem{lemma}[theorem]{Lemma}
\newtheorem{proposition}[theorem]{Proposition}
\newtheorem{corollary}[theorem]{Corollary}

\newtheorem{thmx}{Theorem}
\theoremstyle{remark}
\newtheorem{remark}[theorem]{Remark}

\newcommand{\N}{\mathbb{N}} 
\newcommand{\R}{\mathbb{R}}
\newcommand{\Rn}{\mathbb{R}^n}

\newcommand{\Sp}{\mathbb{S}^{n-1}}

\newcommand{\K}{\mathcal{K}}
\newcommand{\Kn}{\mathcal{K}^n}

\newcommand{\Ycoll}{\mathscr{Y}}
\newcommand{\SOn}{{\rm SO}(n)}
\newcommand{\SOo}{{\rm SO}(1)}
\newcommand{\On}{{\rm O}(n)}
\newcommand{\Oo}{{\rm O}(1)}
\newcommand{\dint}{\,\mathrm{d}}
\newcommand{\ind}{{\mathbf{I}}} 
\newcommand{\rot}{{\rm rot}}
\DeclareMathOperator{\conv}{conv}

\DeclareMathOperator{\dom}{dom}
\DeclareMathOperator{\epi}{epi}

\DeclareMathOperator{\vol}{vol}

\DeclareMathOperator{\supp}{supp}
\DeclareMathOperator{\pos}{pos}

\DeclareMathOperator*{\epilim}{epi-lim}
\DeclareMathOperator{\interior}{int}

\DeclareMathOperator{\infconv}{\mathbin{\Box}} 
\newcommand*\sq{\mathbin{\vcenter{\hbox{\rule{.4ex}{.4ex}}}}}

\newcommand{\Convc}{{\mbox{\rm Conv}_{{\rm c}}(\R^n)}}

\newcommand{\ConvcT}{{\mbox{\rm Conv}_{{\rm c}}(\R^2)}}
\newcommand{\LCc}{{\mbox{\rm LC}_{{\rm c}}(\R^n)}}

\newcommand{\Leg}{\mathcal{L}}

\DeclareMathOperator*{\bigsquare}{\scalerel*{\square}{\textstyle\sum}}

\usepackage{scalerel}

\newcommand\blfootnote[1]{%
  \begingroup
  \renewcommand\thefootnote{}\footnote{#1}%
  \addtocounter{footnote}{-1}%
  \endgroup
}

\title[Generalized outer linearizations and rotational epi-symmetrizations]{Generalized outer linearizations and extremal properties of rotational epi-symmetrizations}

\author{Steven Hoehner and Fabian Mussnig}
\date{}

\begin{document}

\begin{abstract}\noindent
We develop a functional extension of an extremal principle by Schneider\linebreak({\it Monatsh.\ Math.}, 1967) by introducing generalized outer linearizations of convex functions. Given a coercive convex function on $\Rn$, a generalized outer linearization is defined as a convex minorant represented by a general but function-dependent set of slopes, thereby extending classical outer representations of convex bodies by supporting halfspaces. This representation converts geometric outer approximations by supporting halfspaces into functional approximations by supporting affine functions, and replaces outer normal data by a dual sampling problem in the domain of the Legendre--Fenchel transform.

On a standard class of coercive convex functions, we derive a general extremal principle, showing that the rotational epi-symmetrization maximizes best approximations under outer linearizations of any monotone, concave functional that is upper semicontinuous with respect to epi-convergence. A central feature of the analysis is that it is carried out in the natural class of coercive, but not necessarily super-coercive, convex functions. Working in this setting introduces intricate topological and variational difficulties, which are addressed using refined duality and epi-convergence arguments.

As an application of our main results, we derive a functional version of Urysohn's inequality, as well as an analytic extension of a classical covering result of Firey and Groemer ({\it J. London Math. Soc.}, 1964). Finally, we prove an extremal inequality related to the piecewise affine approximation of convex functions.
\end{abstract}

\blfootnote{\emph{2020 Mathematics Subject Classification}: 52A40 (26B25, 39B62, 49J45, 49J52, 52A27, 52A41, 90C25)}
\blfootnote{\emph{Key words and phrases}: convex function, outer linearization, rotation epi-mean, rotational epi-symmetrization, log-concave function, mean width, Urysohn inequality}

\maketitle


\section {Introduction}
\subsection{An extremal principle for convex bodies}
One of the central themes in convex geometry is the representation and approximation of convex bodies by intersections of halfspaces with prescribed outer unit normals. A systematic study of such representations was initiated by Schneider \cite{Schneider-1967}, who obtained a far-reaching generalization of Urysohn's inequality. The latter asserts that among all convex bodies in $\Rn$ with fixed volume, the Euclidean ball uniquely maximizes mean width. More specifically, Urysohn's inequality states that for any \textit{convex body} $K$ in $\Rn$ (i.e.\ a non-empty, compact, convex subset),
\begin{equation}\label{urysohn-ineq}
\left(\frac{\vol_n(K)}{\vol_n(B_n)}\right)^{1/n} \leq \frac{w(K)}{w(B_n)}
\end{equation}
with equality if and only if $K$ is a Euclidean ball. Here and throughout the paper, $\vol_n$ denotes the $n$-dimensional volume (Lebesgue measure), $B_n$ and $\Sp$ are the Euclidean unit ball and unit sphere in $\Rn$, respectively, 
\begin{equation}
\label{eq:mean_width}
w(K)\coloneq\frac{2}{n\vol_n(B_n)}\int_{\Sp}h_K(u)\dint u
\end{equation}
is the \textit{mean width} of $K$, and $h_K(u)\coloneq\max_{x\in K}\langle x,u\rangle$ is the \textit{support function} of $K$, where $\langle \cdot,\cdot\rangle$ denotes the usual inner product on $\Rn$. Since the support function encodes the supporting halfspaces of a given convex body, Urysohn's inequality together with \eqref{eq:mean_width} clearly
highlights the role of support functions and families of outer unit normals in extremal problems.

\medskip

In its general form, Schneider's extremal principle from \cite{Schneider-1967}--formulated in Theorem~\ref{Schneider-main-result-A} and Theorem~\ref{Schneider-main-result-B} below--concerns convex bodies $K$ in $\Rn$ represented as intersections of halfspaces with outer normals belonging to a set $U\subseteq\Sp$. Schneider proved that, for a broad class of geometric functionals, the extremizers among all such representations exhibit strong structural properties, and that sharp inequalities can be derived without a priori finiteness assumptions on the set $U$. Classical results on circumscribed polytopes or circumscribed cylinders, for example, arise as special cases when $U$ is finite. This point of view suggests that Schneider’s theory in \cite{Schneider-1967} is not fundamentally about polytopes, but rather about outer representations of convex bodies controlled by sets of normal vectors, with finiteness playing the role of an additional constraint when one seeks quantitative approximation results.

\medskip

To formulate Schneider's result, let us first fix some of the notation that will be used throughout the paper. Let $\Kn$ denote the set of convex bodies in $n$-dimensional Euclidean space $\Rn$, where, for simplicity, we assume $n\geq 2$ throughout this introduction.
For a convex body  $K\in\Kn$ and a direction $u\in\Sp$, we denote by $H^-(K,u)$ the closed \textit{supporting halfspace} of $K$ with outer unit normal vector $u$, that is,
\[
H^-(K,u)\coloneq\{x\in\Rn: \, \langle x,u\rangle \leq h_K(u)\}.
\]
In addition, let
\[
\mathscr{U}_n\coloneq\{U\subseteq \Sp:\,\pos(U)=\R^n\},
\]
where $\pos(U)$ is the \textit{positive hull} of $U$.

Given $K\in\Kn$ and $U\in\mathscr{U}_n$, we consider the circumscribed set
\begin{equation}\label{schneider-def-1}
    P(K,U)\coloneq\bigcap_{u\in U}H^-(K,u).
\end{equation}
Since $\pos(U)=\Rn$, the set  $P(K,U)$ is bounded, and therefore it is a convex body that contains $K$. In particular, if $U\in\mathscr{U}_n$ is finite, then $P(K,U)$ is a convex polytope that circumscribes $K$, and the set of outer normal unit vectors of its facets (i.e., $(n-1)$-dimensional faces) is a subset of $U$.

Next, let $\SOn$ denote the rotation group of $\Rn$, and for $K\in\Kn$ and $\vartheta\in\SOn$, let $\vartheta K\in\Kn$ be the image of $K$ under the rotation $\vartheta$. Given a set $\mathfrak{U}\subseteq\mathscr{U}_n$ and a functional $\Phi\colon \Kn\to\R$, Schneider \cite{Schneider-1967} defined the quantity
\begin{equation}\label{schneider-eqn2}
    \Phi^*(K,\mathfrak{U})\coloneq\inf\{\Phi(P(\vartheta K,U)):\,\vartheta\in\SOn,\, U\in \mathfrak{U}\}
\end{equation}
for $K\in\Kn$. We assume in the following that $\Phi^*(K,\mathfrak{U})>-\infty$.

Schneider's main result in \cite{Schneider-1967} is the following general extremal property, where for $K\in\Kn$ we write
\[
K_{\rot}\coloneq\frac{w(K)}{2}B_n
\]
for the ball with center $o$ (the origin in $\Rn$) that has the same mean width as $K$. In addition, continuity and semicontinuity of functionals defined on $\Kn$ are understood with respect to the Hausdorff metric, and for convexity and concavity of functionals on $\Kn$, we consider the Minkowski addition of convex bodies.

\begin{thmx}[Schneider \cite{Schneider-1967}]\label{Schneider-main-result-A}
If $\Phi\colon \Kn\to\R$ is upper semicontinuous,
concave and increasing under set inclusion, then 
\[
\Phi^*(K,\mathfrak{U}) \leq \Phi^*(K_{\rot},\mathfrak{U})
\]
for every $K\in\Kn$ and $\mathfrak{U}\subseteq\mathscr{U}_n$.
\end{thmx}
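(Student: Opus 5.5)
The plan is to use rotation means and Hadwiger's theorem: suitable Minkowski averages of rotated copies of $K$ converge in the Hausdorff metric to the ball $K_{\rot}$. Concavity and monotonicity of $\Phi$ then carry the inequality through these averages, and upper semicontinuity carries it through the limit.

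\textbf{Step 1: subadditivity of $P$.} For $K,L\in\Kn$ and $U\in\mathscr{U}_n$ we first show
\[
P(K,U)+P(L,U)\subseteq P(K+L,U).
\]
This holds because $h_{K+L}=h_K+h_L$: if $x\in P(K,U)$ and $y\in P(L,U)$, then $\langle x+y,u\rangle\le h_K(u)+h_L(u)$ for every $u\in U$. Scaling is immediate, since $P(\lambda K,U)=\lambda P(K,U)$ for $\lambda\ge0$. Hence for rotations $\vartheta_1,\dots,\vartheta_m$ and the rotation mean $M=\frac1m\sum_i\vartheta_iK$ we get
\[
\frac1m\sum_{i=1}^m P(\vartheta_i\vartheta K,U)\subseteq P(\vartheta M,U)
\]
for every $\vartheta\in\SOn$, using $\vartheta M=\frac1m\sum_i \vartheta\vartheta_i K$.

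\textbf{Step 2: one averaging step does not decrease $\Phi^*$.} Fix $\vartheta\in\SOn$ and $U\in\mathfrak U$. By monotonicity and then concavity of $\Phi$ (concavity for several summands follows by induction),
\[
\Phi(P(\vartheta M,U))\ \ge\ \Phi\Big(\frac1m\sum_i P(\vartheta\vartheta_iK,U)\Big)\ \ge\ \frac1m\sum_i\Phi(P(\vartheta\vartheta_iK,U))\ \ge\ \Phi^*(K,\mathfrak U).
\]
The last inequality holds because $\vartheta\vartheta_i\in\SOn$. Taking the infimum over $\vartheta$ and $U$ gives $\Phi^*(M,\mathfrak U)\ge\Phi^*(K,\mathfrak U)$. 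Iterating, the same inequality holds for every body $M$ obtained from $K$ by finitely many rotation means.

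\textbf{Step 3: passage to the limit.} By Hadwiger's theorem there is a sequence of iterated rotation means $M_j$ of $K$ converging in the Hausdorff metric to a ball centred at $o$. Rotation means preserve mean width, and mean width is continuous, so the limit is $K_{\rot}$.

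It remains to show $\Phi^*(K_{\rot},\mathfrak U)\ge\limsup_j\Phi^*(M_j,\mathfrak U)$. This is the main obstacle, because $\Phi^*$ is an infimum and upper semicontinuity of $\Phi$ does not directly pass to infima. Two observations resolve it.
\begin{enumerate}[(i)]
\item Since $K_{\rot}$ is rotation invariant, $\Phi^*(K_{\rot},\mathfrak U)=\inf_{U\in\mathfrak U}\Phi(P(K_{\rot},U))$. So it suffices to fix a single $U\in\mathfrak U$ and show $\Phi(P(K_{\rot},U))\ge\Phi^*(K,\mathfrak U)$.
\item For this fixed $U$, we have $\Phi^*(K,\mathfrak U)\le\Phi^*(M_j,\mathfrak U)\le\Phi(P(M_j,U))$ for every $j$, taking $\vartheta$ to be the identity.
\end{enumerate}
We then need $P(M_j,U)\to P(K_{\rot},U)$ in the Hausdorff metric, or at least a comparison that still lets upper semicontinuity apply. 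Let $\varepsilon_j$ be the Hausdorff distance between $M_j$ and $K_{\rot}$. Then $h_{M_j}\le h_{K_{\rot}}+\varepsilon_j$, and since $K_{\rot}+\varepsilon_jB_n$ is a ball, this gives
\[
P(M_j,U)\subseteq P(K_{\rot}+\varepsilon_jB_n,U)=\Big(1+\frac{2\varepsilon_j}{w(K)}\Big)P(K_{\rot},U),
\]
where we assume $w(K)>0$; the case of a point is trivial. By monotonicity, $\Phi(P(M_j,U))\le\Phi((1+\delta_j)P(K_{\rot},U))$ with $\delta_j=2\varepsilon_j/w(K)\to0$.

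Now $(1+\delta_j)P(K_{\rot},U)\to P(K_{\rot},U)$ in the Hausdorff metric; $P(K_{\rot},U)$ is a convex body because $\pos(U)=\Rn$. Upper semicontinuity of $\Phi$ therefore yields
\[
\Phi^*(K,\mathfrak U)\le\limsup_j\Phi((1+\delta_j)P(K_{\rot},U))\le\Phi(P(K_{\rot},U)).
\]
Taking the infimum over $U\in\mathfrak U$ gives $\Phi^*(K,\mathfrak U)\le\Phi^*(K_{\rot},\mathfrak U)$, which is the claim.
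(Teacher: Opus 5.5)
Your proof is correct and is essentially the paper's (i.e.\ Schneider's) argument specialized to convex bodies. Steps~1--2 are Lemma~\ref{mainLem}, with the inclusion $\frac1m\sum_i P(\vartheta\vartheta_iK,U)\subseteq P(\vartheta M,U)$ as the counterpart of \eqref{mainLem-ineq}; note that the first display of your Step~1 should read $\vartheta\vartheta_i$, not $\vartheta_i\vartheta$. Hadwiger's theorem plays the role of Lemma~\ref{epi-symm-convergence}, and Step~3 is the proof of Proposition~\ref{upper-semi-lem} carried out only at the limit, with $K_{\rot}+\varepsilon_j B_n$ in place of $\psi\infconv b_r(\psi)$.

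The one simplification is that at a ball the outer parallel body is a dilate, so $P(K_{\rot}+\varepsilon_j B_n,U)=(1+\delta_j)P(K_{\rot},U)\to P(K_{\rot},U)$ is immediate. In the functional setting, by contrast, $\psi_{\rot}\infconv b_r(\psi_{\rot})$ is not an epi-multiple of $\psi_{\rot}$, which is why the paper needs Lemma~\ref{le:p_psi_infconv_epi-conv}.
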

Choosing $\Phi(K)=\vol_n(K)^{\frac{1}{n}}$ and $\mathfrak{U}=\{\Sp\}$, one immediately retrieves Urysohn's inequality, aside from its equality cases. However, Theorem~\ref{Schneider-main-result-A} has much more wide-ranging consequences. For example, Schneider used his result to elegantly demonstrate that every convex body $K$ is contained in a cylinder whose volume is at most $\frac{\vol_{n-1}(B_{n-1})}{2^{n-1}}w(K)^n$. Furthermore, a previous result of Firey and Groemer \cite{Firey-Groemer-1964} on weighted mean distances is retrieved, which, in particular, implies that any convex body of mean width one can be covered (up to translations) by any simplex whose inscribed sphere has diameter one.

\medskip

In addition to the aforementioned results, Schneider also proved the following inequality.

\begin{thmx}[Schneider \cite{Schneider-1967}]\label{Schneider-main-result-B}
If $\Psi\colon\Kn\to\R$ is lower semicontinuous,
convex and invariant under rotations, then
\[
\Psi(K) \geq \Psi(K_{\rot})
\]
for every $K\in\Kn$.
\end{thmx}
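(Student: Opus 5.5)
The plan is to realize $K_{\rot}$ as a limit of Minkowski averages of rotated copies of $K$. Then convexity and rotation invariance of $\Psi$ bound $\Psi$ of each average by $\Psi(K)$, and lower semicontinuity carries the bound to the limit.

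The key classical fact is Hadwiger's rotation mean theorem. There exist finitely many rotations $\vartheta_1,\dots,\vartheta_m\in\SOn$ (with $m$ depending on the step) such that the sequence of Minkowski averages
\[
K_j\coloneq\frac{1}{m_j}\bigl(\vartheta^{(j)}_1K+\dots+\vartheta^{(j)}_{m_j}K\bigr)
\]
converges in the Hausdorff metric to a ball. This ball is necessarily $K_{\rot}$, for three reasons. First, the support function of a Minkowski average is the average of support functions. Second, $w$ is rotation invariant and Minkowski linear, so $w(K_j)=w(K)$ for all $j$. Third, $w$ is Hausdorff continuous, so the limit ball has mean width $w(K)$. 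To pin the center at $o$, one may first translate $K$ so its Steiner point is at the origin. The Steiner point is rotation equivariant and Minkowski additive, so every $K_j$ then has Steiner point $o$, and hence so does the limit.

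The translation step needs care, since $\Psi$ is only assumed invariant under rotations, not translations. I would avoid it by proving the convergence for $K$ itself. The averages converge to $\frac{w(K)}{2}B_n+s_j$, where $s_j$ is the average of $\vartheta_i^{(j)}s(K)$ and $s(K)$ is the Steiner point. The rotations can be chosen so that these averages of rotated copies of $s(K)$ tend to $o$; for instance, one iterates averaging steps that include rotations spreading $s(K)$ evenly, such as $\vartheta$ and $-\vartheta$-type pairs in $\SOn$ composed with a reflection-free symmetrization. Then $K_j\to K_{\rot}$.

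Given this, the inequality follows directly. Convexity of $\Psi$ with respect to Minkowski addition, iterated (Jensen with equal weights), gives
\[
\Psi(K_j)\le\frac{1}{m_j}\sum_i\Psi\bigl(\vartheta_i^{(j)}K\bigr)=\Psi(K),
\]
where the equality uses rotation invariance. Lower semicontinuity then yields $\Psi(K_{\rot})\le\liminf_j\Psi(K_j)\le\Psi(K)$.

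The main obstacle is the approximation step: producing finite rotation means converging to the centered ball. I would cite Hadwiger's theorem (Schneider's book, Theorem 3.3.5) for convergence to some ball. Alternatively, I would argue directly: the minimal circumradius about the Steiner point can be strictly decreased by a suitable finite rotation average unless the body is already a ball, and a compactness argument (Blaschke selection) finishes it. The identification of the limit via mean width and Steiner point is routine.
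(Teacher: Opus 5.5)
Your argument has the same structure as the paper's. For $\psi=\ind_K$, the rotation epi-means used to prove Theorem~\ref{mainThmB} are exactly $\ind_{K_j}$ for Minkowski rotation means $K_j$, and the proof there is your chain: Jensen plus rotation invariance give $\Psi(K_j)\le\Psi(K)$, and lower semicontinuity passes this to the limit $K_{\rot}$.

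The only real difference is where the approximating sequence comes from. In Lemma~\ref{epi-symm-convergence}, the paper picks finite sets $\mathbf{\Theta}_m$ whose uniform averages discretize Haar measure. Then $h_{K_m}(u)=\frac1m\sum_i h_K(\vartheta_i^{-1}u)\to\int_{\SOn}h_K(\vartheta^{-1}u)\dint\vartheta=\frac{w(K)}{2}$, which identifies the limit as the centered ball at once. Hadwiger's compactness argument instead leaves you to identify the radius and the center afterwards.

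That identification is where your write-up is shaky. You need a single sequence of rotations for which both $K_j-s_j\to\frac{w(K)}{2}B_n$ and $s_j\to o$. (The phrase ``the averages converge to $\frac{w(K)}{2}B_n+s_j$'' has a limit that depends on $j$.) Your sketch does not explain why the extra averaging that pushes $s_j$ to $o$ keeps the first convergence intact. Also, pairing $\vartheta$ with $-\vartheta$ is impossible for odd $n$, since $\det(-\vartheta)=-1$.

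A one-line repair works as follows. Let $M_j$ be rotation means of $K$ with $M_j-s(M_j)\to\frac{w(K)}{2}B_n$; this is Hadwiger applied to $K-s(K)$, whose limit ball has Steiner point $o$. Let $\rho_j\in\SOn$ be the half-turn in a $2$-plane containing $s(M_j)$. Then $N_j=\frac12(M_j+\rho_jM_j)=\frac12\bigl((M_j-s(M_j))+\rho_j(M_j-s(M_j))\bigr)$ is again a rotation mean of $K$. Since $\rho_jB_n=B_n$, its Hausdorff distance to $\frac{w(K)}{2}B_n$ is at most that of $M_j-s(M_j)$.

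Simpler still, in your direct argument minimize the circumradius about $o$, namely $\max_{u\in\Sp}h_M(u)$, over the closure of the set of rotation means of $K$. The covering-by-rotated-caps step then forces the minimizer to be a ball centered at $o$, so the Steiner point is never needed. With either repair the proof is complete.
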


Schneider's combined proof of Theorem~\ref{Schneider-main-result-A} and Theorem~\ref{Schneider-main-result-B} is based on a symmetrization procedure involving Hadwiger rotation means of a convex body $K$, which are used to approximate $K_{\rm rot}$ (see \cite[\S 4.5.3]{hadwiger1957} or \cite[Theorem 3.3.5]{SchneiderBook}). His argument was
inspired by an analogous method of Macbeath \cite{Macbeath}, where Steiner symmetrizations were used to derive an extremal property of the ball involving inscribed polytopes with a given number of vertices. To the best of our knowledge, the equality cases of Theorems \ref{Schneider-main-result-A} and \ref{Schneider-main-result-B} have not been characterized yet.

\subsection{Outer linearizations of convex functions}
In recent years, there has been growing interest in extending geometric principles to a functional setting, motivated by developments in functional inequalities, information theory, and high-dimensional probability (see, for example, \cite{AGA-book-II, Colesanti-inbook} and the references therein). In this framework, a convex body is replaced by a convex function $\psi$ on $\Rn$ (or, equivalently, by a log-concave function $f=e^{-\psi}$ on $\Rn$), and halfspace representations are replaced by representations of $\psi$ as a supremum of affine functions. The natural language for this transition is provided by epi-convergence, infimal convolution, and Legendre--Fenchel duality.

\medskip

The purpose of this paper is twofold: on the one hand, we develop a functional generalization of Schneider’s results. At the same time, we introduce a dual paradigm for outer approximations. Instead of prescribing outer unit normals of supporting halfspaces, our approximations are controlled by slope sets in the domain of the Legendre--Fenchel transform. In this sense, the passage from convex bodies to convex functions represents a conceptual shift from geometric sampling on the sphere to dual sampling in the domain of the conjugate function. By doing so, we introduce \textit{generalized outer linearizations} of a given convex function $\psi$, which are convex minorants $q_{\psi,Y}\leq \psi$ that can be written as
\[
q_{\psi,Y}(x)\coloneq\sup\{\ell(x) :\, \ell \text{ is affine with slope } y, \ell\leq \psi, y\in Y\},\quad x\in\Rn,
\]
where $Y\subseteq \Rn$ is a (possibly infinite) set of admissible slopes. For finite slope sets, this yields classical outer linearizations from convex optimization, which are used in cutting plane methods (see, for example, \cite{Bertsekas2015,Bertsekas2011}). A crucial difference to the geometric setting is that the set of admissible slopes depends on $\psi$ itself. In particular, it turns out that the exact dependence cannot be expressed in terms of the subdifferential of $\psi$, but rather by the domain of the convex conjugate of $\psi$, resulting in a predominantly analytic and less geometric approach. This dependence makes the associated extremal problems inherently variational with respect to the dual variables.

\medskip

While it is common in functional approximation problems to impose super-coercivity on the convex functions, thereby ensuring that the convex conjugate is finite-valued everywhere, such an assumption excludes many natural examples and obscures the underlying structure. Instead, we work in the broader class of coercive convex functions, which introduces substantial analytical and topological difficulties that do not arise under super-coercivity: the domain of the Legendre--Fenchel transform may be a proper convex set, epi-convergent sequences may lose interior domain points, and pointwise convergence of conjugates generally fails on the boundary of the domain.

To address these issues, we develop a collection of tools specifically adapted to the family of coercive convex functions, which furthermore allow us to tackle stability questions associated with generalized outer linearizations. We also develop a rotational epi-symmetrization procedure for convex functions (previously introduced in a simplified setting in \cite{CLM-Hadwiger1}) and analyze its interplay with general outer linearizations.

\subsection{Overview of the paper}
Next, in Section \ref{sec:main-results}, we formulate our main results, stated as Theorem \ref{mainThmB} and Theorem \ref{mainThmA}. In Section \ref{sec:prelim}, we introduce the relevant background and notation that will be used throughout the paper, and in Section \ref{sec:epi-means-section}, we study rotation epi-means and rotational epi-symmetrizations of convex functions. In Section \ref{sec:existence}, we determine the explicit conditions needed for the existence of generalized outer linearizations of coercive convex functions. Then, in Section \ref{seq:topo}, we study topological properties and stability of generalized outer linearizations with respect to epi-convergence, for which we introduce a family of function-dependent ``balls" $b_r(\psi)$. Armed with the tools from these sections, in Section \ref{sec:proofs-main-thms} we give the full proofs of Theorem~\ref{mainThmB} and Theorem~\ref{mainThmA} and show how Theorem~\ref{Schneider-main-result-A} can be retrieved from Theorem~\ref{mainThmA}. In Section \ref{sec:log-concave}, we develop an analogous framework for generalized outer linearizations of log-concave functions and their corresponding extremal results. Finally, in Section~\ref{sec:applications}, we present several applications of our main results, including: a new functional Urysohn-type inequality; an extension of a classical covering result of Firey and Groemer \cite{Firey-Groemer-1964} to the functional setting; and an extremal result on the approximation of convex functions by piecewise affine minorants.


\section{Main results}\label{sec:main-results}

Let $\Convc$ denote the set of convex functions $\psi\colon \Rn\to (-\infty,\infty]$ that are:
\begin{itemize}
    \item \textit{coercive}: $\lim_{|x|\to\infty} \psi(x)=\infty$,
    \item \textit{proper}: there exists $x\in\Rn$ such that $\psi(x)<\infty$,
    \item and lower semicontinuous (l.s.c.).
\end{itemize}
We write $\psi_1\infconv \psi_2\in\Convc$ for the \textit{infimal convolution} or \textit{epi-sum} of $\psi_1,\psi_2\in\Convc$, that is
\[
(\psi_1\infconv \psi_2)(x)\coloneq\inf\nolimits_{y\in\Rn} \big(\psi_1(x-y)+\psi_2(y)\big),\quad x\in\Rn,
\]
and $(\lambda \sq \psi)(x)\coloneq\lambda\,\psi(x/\lambda)$, $x\in\Rn$, is the \textit{epi-multiplication} of $\psi\in\Convc$ with $\lambda >0$.

We say that $\Psi\colon \Convc\to [-\infty,\infty]$ is \emph{convex} if
\begin{equation}\label{convexity-def}
\Psi(\lambda\sq\psi_1 \infconv (1-\lambda)\sq\psi_2)\leq\lambda \Psi(\psi_1) + (1-\lambda) \Psi(\psi_2)
\end{equation}
for all $\psi_1,\psi_2\in\Convc$ and every $\lambda\in(0,1)$. For this definition to make sense, we do not allow $\Psi$ to attain both $-\infty$ and $\infty$ at the same time. Therefore, when we write that $\Psi$ takes values in $[-\infty,\infty]$, we mean that $\Psi$ takes values either in $(-\infty,\infty]$ or in $[-\infty,\infty)$.

If the inequality in \eqref{convexity-def} is reversed, then we say that $\Psi$ is \textit{concave}. Furthermore, $\Psi$ is \textit{invariant under rotations} if  $\Psi(\psi\circ\vartheta^{-1}) = \Psi(\psi)$ for every $\psi\in\Convc$ and every $\vartheta\in \SOn$. In the one-dimensional case, we generally require invariance under $\Oo=\{\pm 1\}$, and we then call a corresponding functional \textit{invariant under reflections}. Lastly, when we say that $\Psi$ is upper (or lower) semicontinuous, then this is understood with $\Convc$ being equipped with the topology associated with epi-convergence (see Section~\ref{sec:prelim} for the precise definitions).

\medskip

Our first result is the following functional version of Theorem~\ref{Schneider-main-result-B}.
\begin{theorem}
\label{mainThmB}
Let $n\geq 2$. If $\Psi\colon\Convc\to[-\infty,\infty]$ is a lower semicontinuous, convex function on $\Convc$ that is invariant under rotations, then
\[
\Psi(\psi) \geq \Psi(\psi_\rot) 
\]
for every $\psi\in\Convc$. For $n=1$, the same inequality is true if we assume $\Psi$ to be convex on $\Convc$ and invariant under reflections.
\end{theorem}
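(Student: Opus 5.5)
We will follow Schneider's strategy and replace Hadwiger rotation means of convex bodies by rotation epi-means of convex functions. Recall that $\psi_\rot$ is the radially symmetric function determined by averaging the conjugate over $\SOn$. Concretely, we expect $\psi_\rot^*(y)=\int_{\SOn}\psi^*(\vartheta^{-1}y)\dint\vartheta$ with respect to the Haar probability measure, taken in the form developed in Section~\ref{sec:epi-means-section}. For rotations $\vartheta_1,\dots,\vartheta_m\in\SOn$, consider the rotation epi-mean
\[
\frac1m\sq(\psi\circ\vartheta_1^{-1})\infconv\cdots\infconv\frac1m\sq(\psi\circ\vartheta_m^{-1}).
\]
Its conjugate is the arithmetic mean $\frac1m\sum_i\psi^*\circ\vartheta_i^{-1}$, because conjugation turns epi-sums into sums and epi-multiplication into multiplication.

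The first step is the elementary inequality. Iterating the convexity inequality \eqref{convexity-def} gives
\[
\Psi\Big(\bigsquare_{i=1}^m \tfrac1m\sq(\psi\circ\vartheta_i^{-1})\Big)\le \tfrac1m\sum_{i=1}^m\Psi(\psi\circ\vartheta_i^{-1}).
\]
We split a mean with $m$ terms as $\frac1m\sq\psi_1\infconv\frac{m-1}{m}\sq(\text{mean of the remaining } m-1)$ and use the identity $\lambda\sq(\mu\sq\phi)=(\lambda\mu)\sq\phi$ together with associativity of $\infconv$. By rotation invariance, the right-hand side equals $\Psi(\psi)$. The values $\pm\infty$ cause no trouble, since $\Psi$ does not take both.

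The second step is approximation. We need a sequence of rotation epi-means $\phi_k$ that epi-converges to $\psi_\rot$. Then lower semicontinuity gives
\[
\Psi(\psi_\rot)\le\liminf_k\Psi(\phi_k)\le\Psi(\psi).
\]
The natural route is to take Haar-random (or equidistributed) rotations. By a strong law of large numbers, or by Riemann-sum approximation of the Haar integral, the averages $\frac1m\sum\psi^*\circ\vartheta_i^{-1}$ converge to $\psi_\rot^*$ pointwise on $\interior\dom\psi_\rot^*$. Coercivity of $\psi$ means $o\in\interior\dom\psi^*$, so $\psi^*$ is bounded near $o$. Each average is finite on a common ball around $o$ that is contained in every rotated domain, and by Wijsman's theorem epi-convergence of the conjugates is equivalent to epi-convergence of the functions. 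Alternatively, we could invoke the Hadwiger-type result from Section~\ref{sec:epi-means-section}, if it is stated there as $\psi_\rot$ being an epi-limit of rotation means, and the proof becomes immediate.

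The main obstacle is exactly this convergence in the merely coercive setting. The domain $\dom\psi^*$ need not be rotation invariant, and the domain of the average is the intersection of the rotated domains, which can shrink with $m$. Convergence on the boundary of $\dom\psi_\rot^*$ fails in general. The plan is to use the fact that for convex functions finite near a common point, pointwise convergence on a dense subset of the interior of the limit's domain, together with convergence of the domains, yields epi-convergence.

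To secure the domain convergence, we first treat $\psi$ with $\dom\psi^*$ equal to a ball, or replace $\psi$ by an approximating function. One option is to restrict $\psi^*$ to rotation-invariant sublevel sets; another is a Hadwiger-type iterative scheme, as in Schneider's proof, where successive rotation means make $\dom\psi^*$ converge in the Hausdorff sense to its rotational symmetrization $r B_n$, with $r$ the inradius-like quantity in the definition of $\psi_\rot$. The iterative scheme has an advantage: each step again involves only rotation means, so the inequality from the first step persists, and we can pass to limits by lower semicontinuity and a diagonal argument.

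For $n=1$, $\SOo$ is trivial, so we use $\Oo=\{\pm1\}$ instead. The single mean $\frac12\sq\psi\infconv\frac12\sq\psi(-\cdot)$ has conjugate $\frac12(\psi^*(y)+\psi^*(-y))$, which is exactly $\psi_\rot^*$. No limit is needed, and this explains why semicontinuity is not required in that case.
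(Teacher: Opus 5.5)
Your proposal is correct and is essentially the paper's proof: iterated convexity plus rotation invariance gives $\Psi(T_{\mathbf{\Theta}_m}(\psi))\le\Psi(\psi)$, and Lemma~\ref{epi-symm-convergence} (which states exactly that some sequence of rotation epi-means epi-converges to $\psi_\rot$) together with lower semicontinuity settles $n\geq 2$, while for $n=1$ the single mean over $\Oo$ already equals $\psi_\rot$. The domain issue you flag is resolved directly in that lemma: for a suitably dense sequence of rotations, the averaged conjugates converge to $\Leg\psi_\rot$ at every $x$ with $|x|\neq r(\psi)$ (becoming $+\infty$ when $|x|>r(\psi)$), which suffices by Lemmas~\ref{le:epi_conv_pointwise} and~\ref{le:conj_continuous}; by contrast, your fallback of first replacing $\psi$ by approximating functions $\psi_k$ would not work, since lower semicontinuity only yields $\Psi(\psi)\le\liminf_k\Psi(\psi_k)$, the wrong direction for transferring the inequality back to $\psi$.
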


\noindent
Here, $\psi_\rot\in\Convc$ is the \emph{rotational epi-symmetrization} of $\psi$, which we define in Lemma~\ref{le:conj_rot_epi-symm} and which, for $n\geq 2$, can be obtained as a limit of a sequence of \emph{rotation epi-means} of $\psi$ (see Lemma~\ref{epi-symm-convergence}). The latter are given by
\begin{equation}
\label{eq:def_rot_epi-mean}
T_{\mathbf{\Theta}_m}(\psi)\coloneq\frac{1}{m} \sq \bigsquare_{i=1}^m(\psi\circ\vartheta_i^{-1})\in\Convc,
\end{equation}
where $\mathbf{\Theta}_m\coloneq\{\vartheta_1,\ldots,\vartheta_m\}\subseteq\SOn$, $m\in\N$, is a set of rotations. For $n=1$, we define
\begin{equation}
\label{eq:def_rot_onedim}
\psi_\rot \coloneq \frac 12 \sq \big(\psi \infconv \psi^-\big)\in\Convc,
\end{equation}
where $\psi^-(x)\coloneq\psi(-x)$ for $x\in\Rn$. These symmetrizations, which generalize the classical Hadwiger rotation means,
were previously considered by Colesanti, Ludwig, and Mussnig in \cite[Section 4.1]{CLM-Hadwiger1}. There, the smaller space of super-coercive convex functions on $\Rn$ was used, and we will show in Section~\ref{sec:epi-means-section} that this definition extends to $\Convc$.

\medskip

Next, let us introduce our functional analogue of Theorem~\ref{Schneider-main-result-A}. Given $\psi\in\Convc$ and $y\in\Rn$, we write $\ell_{\psi,y}\colon\Rn\to [-\infty,\infty)$ for the largest affine function with gradient $y$ that is bounded from above by $\psi$. This means that
\[
\ell_{\psi,y}(x)=\langle x,y\rangle +c,\quad x\in\Rn,
\]
where $c=c_\psi(y)\in [-\infty,\infty)$ is the maximal number such that $\ell_{\psi,y}\leq \psi$ pointwise. In the case no real $c$ satisfies this condition, we set $c=-\infty$ (and thus $\ell_{\psi,y}\equiv -\infty$), and we remark that $c=\infty$ cannot occur since $\psi$ is proper. For nonempty $Y\subseteq \Rn$, we now define $q_{\psi,Y}\colon \Rn\to [-\infty,\infty]$ as the pointwise supremum
\begin{equation}
\label{eq:def_outer_lin}
q_{\psi,Y}(x)\coloneq\sup\nolimits_{y\in Y} \ell_{\psi,y}(x),\quad x\in\Rn.
\end{equation}
Note that $q_{\psi,Y}$ is convex and $q_{\psi,Y}\leq \psi$ pointwise. Furthermore, if $Y$ is finite, then $q_{\psi,Y}$ is piecewise affine. Observe that if $y$ is an element of the \textit{subdifferential}
\[
\partial \psi(x)=\{y\in\Rn:\, \psi(z)\geq \psi(x)+\langle y,z-x\rangle \text{ for all } z\in\Rn\}
\]
for some $x\in\Rn$, then trivially $\ell_{\psi,y}>-\infty$. When $Y$ is a finite set consisting of such vectors, the function $q_{\psi,Y}$ is also known as the \textit{outer linearization} of $\psi$, which is closely related to cutting plane methods from convex optimization; see, for example, \cite[Section 4.1]{Bertsekas2015}. Given that we allow a more general construction here (cf.\ also Remark~\ref{rem:outer_lin_subdiff} below), we call \eqref{eq:def_outer_lin} the \textit{generalized outer linearization} of $\psi$. Note that in particular, since $\psi\in\Convc$, we have \begin{equation}\label{Y-equals-Rn}
    q_{\psi,\R^n}=\psi
\end{equation}which follows from Lemma~\ref{le:q_psi_rn_psi} below. Related results on the reconstruction of a convex function from slope data were, for example, obtained in \cite{benoist_daniilidis_2002,benoist_daniilidis_2005} using cyclically monotone operators (cf.\ \cite[Theorem 24.9]{RockafellarBook}).

We will show in Proposition~\ref{prop:C_intersection} that for any given $\psi\in\Convc$,  the function $q_{\psi,Y}$ is also an element of $\Convc$ if and only if $Y\subseteq\Rn$ contains a subset $Y_o$ such that
\begin{equation}
\label{eq:cond_existence}
\pos(Y_o)=\Rn \qquad \text{and} \qquad Y_o\subseteq \dom(\Leg \psi).
\end{equation}
Here, $\dom(\Leg \psi)$ denotes the \textit{domain} of the Legendre--Fenchel transform $\Leg \psi$ of $\psi$ (see Section~\ref{sec:prelim}). Let us note that $\dom(\Leg \psi)=\Rn$ when $\psi$ is \textit{super-coercive}, that is $\lim_{|x|\to\infty} \psi(x)/|x|=\infty$, or equivalently, when the range of the subdifferential of $\psi$ is $\Rn$. In this case, the conditions \eqref{eq:cond_existence} are therefore equivalent to $\pos(Y)=\Rn$. We furthermore note that exact conditions on $Y$ so that $q_{\psi,Y}\in\Convc$ cannot be formulated in terms of the subdifferential $\partial \psi$ and that it is necessary to consider the convex conjugate $\Leg \psi$ (see Remark~\ref{rem:outer_lin_subdiff}).

To ensure that all expressions are well-defined in the following (see Lemma~\ref{le:existence_consequences} and Remark~\ref{re:conditions_necessary}), we have to further restrict the admissible sets $Y\subseteq\Rn$. Given $\psi\in\Convc$, we define
\[
\Ycoll_\psi\coloneq\{Y\subseteq\Rn :\, \exists Y_o\subseteq Y \text{ s.t. } \pos(Y_o)=\Rn, Y_o\subseteq \interior(r(\psi) B_n)\},
\]
where $\interior(A)$ denotes the interior of $A\subseteq\Rn$, and $r(\psi)\in(0,\infty]$ is the radius of the largest open, centered ball that is contained in $\dom(\Leg \psi)$ (see Section~\ref{sec:prelim}). Now, for $\Phi\colon\Convc\to [-\infty,\infty]$, a convex function $\psi\in\Convc$, and an admissible collection  
$\mathcal{Y}\subseteq \Ycoll_\psi$,  we define
\begin{equation}
\label{mainDef} 
\Phi^*(\psi, \mathcal{Y}) \coloneq \inf\{ \Phi(q_{\psi\circ \vartheta^{-1},Y}) : \,\vartheta\in \SOn,\, Y\in \mathcal{Y}\},
\end{equation}
in case $n\geq 2$. For $n=1$, we define $\Phi^*$ analogously, considering $\Oo$ in the infimum instead.

\medskip

Our following main result corresponds to Theorem~\ref{Schneider-main-result-A}. For this, we say that a functional $\Phi$ on $\Convc$ is \textit{monotone decreasing} if $\Phi(\psi_1)\leq \Phi(\psi_2)$ whenever $\psi_1,\psi_2\in\Convc$ are such that $\psi_1\geq\psi_2$ pointwise. Under such a monotonicity assumption on $\Phi$ and given that $q_{\psi,Y} \leq \psi$ always holds, the above definition of $\Phi^*(\psi)$ can also be viewed as a kind of best approximation of $\Phi(\psi)$ via outer linearizations. This becomes particularly evident if $\Phi$ is also invariant under rotations.

\begin{theorem}
\label{mainThmA}
If $\Phi\colon \Convc\to[-\infty,\infty]$ is upper semicontinuous, concave, and monotone decreasing, then
\[
\Phi^*(\psi,\mathcal{Y}) \leq \Phi^*(\psi_{\rot},\mathcal{Y})
\]
for every $\psi\in\Convc$ and $\mathcal{Y}\subseteq \Ycoll_\psi$. In particular, $q_{\psi\circ \vartheta^{-1},Y},q_{\psi_{\rot}\circ \vartheta^{-1},Y}\in\Convc$ for every $Y\in\mathcal{Y}$ and $\vartheta\in\On$, which ensures that the above expressions are well-defined.
\end{theorem}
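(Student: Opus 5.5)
Following Schneider's proof of Theorem~\ref{Schneider-main-result-A}, I will approximate $\psi_\rot$ by rotation epi-means and show that $\Phi^*(\cdot,\mathcal{Y})$ does not decrease along this approximation. Everything rests on one pointwise inequality for outer linearizations. In the conjugate picture, $\Leg(\lambda\sq\psi_1\infconv(1-\lambda)\sq\psi_2)=\lambda\Leg\psi_1+(1-\lambda)\Leg\psi_2$. Moreover $\ell_{\psi,y}(x)=\langle x,y\rangle-\Leg\psi(y)$, that is $c_\psi(y)=-\Leg\psi(y)$, and so
\[
q_{\psi,Y}=\Leg\big(\Leg\psi+\ind_Y\big)
\]
(convex indicator). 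Since $\ind_Y=\lambda\ind_Y+(1-\lambda)\ind_Y$ and $\Leg$ turns sums into epi-sums when no closure is needed, this gives
\[
q_{\lambda\sq\psi_1\infconv(1-\lambda)\sq\psi_2,\,Y}\;\ge\;\lambda\sq q_{\psi_1,Y}\infconv(1-\lambda)\sq q_{\psi_2,Y}.
\]
This is the functional analogue of $P(K+L,U)\subseteq P(K,U)+P(L,U)$. The same statement holds for $m$ summands.

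\textbf{Well-definedness.} First I will check the last assertion of Theorem~\ref{mainThmA}. I will show $r(\psi\circ\vartheta^{-1})=r(\psi)$, which holds because $\Leg(\psi\circ\vartheta^{-1})=\Leg\psi\circ\vartheta^{-1}$ and $r$ is defined through a centred ball. I will also show $r(\psi_\rot)\ge r(\psi)$, using the description of $\Leg\psi_\rot$ from Lemma~\ref{le:conj_rot_epi-symm} as a rotation average of $\Leg\psi$. Then $\mathcal{Y}\subseteq\Ycoll_\psi\subseteq\Ycoll_{\psi_\rot}$, and Proposition~\ref{prop:C_intersection} gives membership in $\Convc$.

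\textbf{Inequality for one epi-mean.} Fix $\varepsilon>0$, $\vartheta\in\SOn$ and $Y\in\mathcal{Y}$, and set $\bar\psi=T_{\mathbf\Theta_m}(\psi)$. By the pointwise inequality, $q_{\bar\psi\circ\vartheta^{-1},Y}\ge\frac1m\sq\bigsquare_i q_{\psi\circ(\vartheta\vartheta_i)^{-1},Y}$. Monotonicity and concavity of $\Phi$ (iterated to $m$ terms) then give
\[
\Phi(q_{\bar\psi\circ\vartheta^{-1},Y})\ge\tfrac1m\sum_i\Phi(q_{\psi\circ(\vartheta\vartheta_i)^{-1},Y})\ge\Phi^*(\psi,\mathcal{Y}).
\]
Taking the infimum yields $\Phi^*(T_{\mathbf\Theta_m}\psi,\mathcal{Y})\ge\Phi^*(\psi,\mathcal{Y})$. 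Iterating, the same holds for every epi-mean in the approximating sequence $\psi_k\to\psi_\rot$ of Lemma~\ref{epi-symm-convergence}. For $n=1$ the definition \eqref{eq:def_rot_onedim} is itself one such mean with $\Oo$, so the argument ends here.

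\textbf{Passage to the limit, the main obstacle.} Fix $\vartheta$ and $Y$. I need
\[
\Phi(q_{\psi_\rot\circ\vartheta^{-1},Y})\ge\limsup_k\Phi(q_{\psi_k\circ\vartheta^{-1},Y}).
\]
By upper semicontinuity and monotonicity it suffices to produce functions $g_k\le q_{\psi_k\circ\vartheta^{-1},Y}$ with $g_k\to q_{\psi_\rot\circ\vartheta^{-1},Y}$ epi-convergently. The difficulty is that $\Leg\psi_k$ need not converge to $\Leg\psi_\rot$ at boundary points of the domain, so $q_{\psi_k,Y}$ need not epi-converge to $q_{\psi_\rot,Y}$.

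My first attempt is to truncate $Y$ to $Y\cap b_r$, using the function-dependent balls $b_r(\psi)$ of Section~\ref{seq:topo}. Choosing $r<r(\psi)$, these sets lie uniformly inside all domains; the conjugates $\Leg\psi_k$ are rotation averages of $\Leg\psi$ and hence have domains containing $\interior(r(\psi)B_n)$. By Wijsman's theorem, $\Leg\psi_k\to\Leg\psi_\rot$ locally uniformly on that open set. I will take $g_k=q_{\psi_k\circ\vartheta^{-1},Y\cap b_r}$, which lies in $\Convc$ because $Y_o$ can be kept. Then $g_k\to q_{\psi_\rot\circ\vartheta^{-1},Y\cap b_r}$ by the stability results of Section~\ref{seq:topo}.

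Finally I let $r\uparrow$. The functions $q_{\psi_\rot\circ\vartheta^{-1},Y\cap b_r}$ increase to $q_{\psi_\rot\circ\vartheta^{-1},Y}$ (after l.s.c.\ closure, with equal epi-limit), so upper semicontinuity gives the claim. Since the bound holds for each $\vartheta$ and $Y$, taking the infimum finishes the proof.
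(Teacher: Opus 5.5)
Your architecture matches the paper's: monotonicity of $\Phi^*(\cdot,\mathcal{Y})$ under epi-means, approximation of $\psi_\rot$, then upper semicontinuity. Two steps fail as written, however. The first is easily repaired. The key pointwise inequality is reversed; what holds is
\[
q_{\lambda\sq\psi_1\infconv(1-\lambda)\sq\psi_2,Y}\le\lambda\sq q_{\psi_1,Y}\infconv(1-\lambda)\sq q_{\psi_2,Y}.
\]
Indeed, for $x=\lambda x_1+(1-\lambda)x_2$ and $y\in Y$,
$\langle x,y\rangle-\lambda\Leg\psi_1(y)-(1-\lambda)\Leg\psi_2(y)\le\lambda q_{\psi_1,Y}(x_1)+(1-\lambda)q_{\psi_2,Y}(x_2)$.
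In general $\Leg(f+g)\le\Leg f\infconv\Leg g$, and equality fails. For example, take $n=2$, $Y=\{\pm e_1,\pm e_2\}$, $\Leg\psi_1(y)=y_1^2$, $\Leg\psi_2(y)=y_2^2$ and $\lambda=\frac12$; at $o$ the two sides are $-\frac12$ and $0$. The geometric analogue is $P(K,U)+P(L,U)\subseteq P(K+L,U)$, not the inclusion you wrote. With your ``$\ge$'', a monotone decreasing $\Phi$ would give $\Phi(q_{\bar\psi\circ\vartheta^{-1},Y})\le\Phi\bigl(\frac1m\sq\bigsquare_i q_{\psi\circ(\vartheta\vartheta_i)^{-1},Y}\bigr)$, which is the useless direction. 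With ``$\le$'' your chain is exactly Lemma~\ref{mainLem}, where the paper writes $q_{T_{\mathbf{\Theta}_m}(\psi)\circ\vartheta^{-1},Y}=q_{P,Y}\le q_{P,\Rn}=P$.

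The second problem is more serious: the limit $r\uparrow r(\psi)$ does not reach $q_{\psi_\rot\circ\vartheta^{-1},Y}$. Here I read $Y\cap b_r$ as $Y\cap rB_n$, since $b_r(\psi)$ is a function. For fixed $r<r(\psi)$ your step is fine, directly from $\sup_{rB_n}|\Leg\psi_k-\Leg\psi_\rot|\to0$; note that Lemma~\ref{le:p_psi_contin} itself only covers finite $Y$.

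The increasing limit, however, is $q_{\psi_\rot\circ\vartheta^{-1},Y\cap\interior(r(\psi)B_n)}$, which can be strictly smaller than $q_{\psi_\rot\circ\vartheta^{-1},Y}$. Membership in $\Ycoll_\psi$ only constrains $Y_o$, while $\Leg\psi_\rot$ may be finite on $r(\psi)\Sp$. For $\psi=h_{[-1,1]^n}$ one has $\Leg\psi_\rot=\ind_{B_n}$. Take $Y=Y_o\cup\{e_1\}$, with $Y_o$ the vertices of a simplex in $\frac12 B_n$ having $o$ in its interior. Your limit is $h_{\conv Y_o}$, while $q_{\psi_\rot,Y}=\max\{h_{\conv Y_o},\langle\cdot,e_1\rangle\}$. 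Monotonicity points the wrong way, so you only get $\Phi^*(\psi,\mathcal{Y})\le\Phi(q_{\psi_\rot\circ\vartheta^{-1},Y\cap\interior(r(\psi)B_n)})$.

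The argument is complete when $r(\psi)=\infty$, when $Y\subseteq\interior(r(\psi)B_n)$, or for $Y=\Rn$, since boundary values of $\Leg\psi_\rot$ are radial limits by Lemma~\ref{le:conv_onedim_cont}. These cases cover all applications in Section~\ref{sec:applications}.

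The paper's proof has the same hole. Its device is really the same truncation, since $q_{\psi\infconv b_r(\psi),Y}=q_{\psi,Y\cap C_r(\psi)}-r\to q_{\psi,Y\cap\interior(\dom(\Leg\psi))}$. Proposition~\ref{upper-semi-lem} invokes Lemma~\ref{le:p_psi_infconv_epi-conv} for arbitrary $Y\in\mathcal{Y}$, although that lemma assumes $Y\subseteq\interior(\dom(\Leg(\psi\circ\vartheta^{-1})))$. The proposition in fact fails as stated: take $\psi=|\cdot|$, $\psi_j=(1-\frac1j)|\cdot|$, $\mathcal{Y}=\{Y_o\cup\Sp\}$ and $\Phi(\varphi)=\log J(e^{-\varphi})$.

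One way to close the gap is to adapt the rotations to the boundary slopes.
\begin{enumerate}
\item Fix $Y$. By separability, choose a countable $Y'\subseteq Y\cap\dom(\Leg\psi_\rot)$ containing a finite positively spanning $Y_o'\subseteq Y_o$, with $q_{\psi_\rot,Y'}=q_{\psi_\rot,Y}$.
\item Take i.i.d.\ Haar rotations $\vartheta_1,\vartheta_2,\ldots$. For $y\in Y'$ the map $\vartheta\mapsto\Leg\psi(\vartheta^{-1}y)$ is bounded below with integral $\Leg\psi_\rot(y)<\infty$. The strong law of large numbers therefore gives, almost surely, $\Leg(T_{\mathbf{\Theta}_m}(\psi))(y)\to\Leg\psi_\rot(y)$ for all $y\in Y'$.
\item For finite $F$ with $Y_o'\subseteq F\subseteq Y'$ this yields $q_{T_{\mathbf{\Theta}_m}(\psi),F}\to q_{\psi_\rot,F}$ uniformly. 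Lemma~\ref{mainLem}, monotonicity and upper semicontinuity then give $\Phi^*(\psi,\mathcal{Y})\le\limsup_m\Phi(q_{T_{\mathbf{\Theta}_m}(\psi),F})\le\Phi(q_{\psi_\rot,F})$.
\item Let $F\uparrow Y'$. An increasing sequence of l.s.c.\ functions epi-converges to its supremum, so upper semicontinuity gives $\Phi^*(\psi,\mathcal{Y})\le\Phi(q_{\psi_\rot,Y})$. Since $\psi_\rot\circ\vartheta^{-1}=\psi_\rot$, this finishes the proof.
\end{enumerate}
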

We note that in several applications, the above restriction to function-dependent slope sets in $\Ycoll_\psi$ can be bypassed relatively easily. An example is presented in Section~\ref{suse:approx}.

\medskip

The main results above show that rotational epi-symmetrization plays the same extremal role in the functional setting considered here as the Euclidean ball does in Schneider’s principle. Broadly speaking, the proof strategies for Theorem~\ref{mainThmB} and Theorem~\ref{mainThmA} roughly follow Schneider's ideas in \cite{Schneider-1967}. Our proof method uses a functional symmetrization process involving rotation epi-means \cite{CLM-Hadwiger1}, which are functional analogues of Hadwiger's classical rotation means of convex bodies used by Schneider in his original proofs of Theorems \ref{Schneider-main-result-A} and \ref{Schneider-main-result-B}. However, considering the commonly used space of coercive convex functions leads to considerable complications, mostly concerning topological properties of generalized outer linearizations. We emphasize that these issues would not occur in the more restrictive setting of super-coercive functions. This is already evident from the fact that for super-coercive, convex $\psi$, the conditions on $\mathcal{Y}$ so that $\Phi^*(\psi,\mathcal{Y})$ exists no longer depend on $\psi$. An essential step in proving Theorem~\ref{mainThmA} is therefore found in Proposition~\ref{upper-semi-lem}, where we show that $\Phi^*(\cdot,\mathcal{Y})$ is upper semicontinuous. Note that Proposition~\ref{upper-semi-lem} is more general than is necessary for the proof of Theorem~\ref{mainThmA}, and that we consider the upper semicontinuity of $\Phi^*(\cdot,\mathcal{Y})$ to be of independent interest.
 
By embedding convex bodies as indicator functions within our functional framework and restricting to a corresponding subclass of admissible linearizations, we recover Schneider’s Theorems \ref{Schneider-main-result-A} and \ref{Schneider-main-result-B} (see Subsection \ref{geometric-special-case} for the details). For results on geometric or functional extremal problems that are most closely related to those of this paper, we refer the reader to \cite{BucurFragalaLamboley,Ting-Chen,Hoehner-2023,Paouris-Pivovarov,PB-2020,Rinott,Schneider-1967} and the references therein.

\medskip

For the reader's convenience, the main correspondences between convex bodies and convex functions that arise in this paper are summarized in Table~\ref{tab:corr}.

\begin{table}[!ht]
\centering
\footnotesize
\setlength{\tabcolsep}{5pt}
\renewcommand{\arraystretch}{1.2}

\begin{tabular}{c|c}
\textbf{Convex bodies} & \textbf{Convex functions} \\
\hline

Convex body $K \in\Kn$ 
& Convex function $\psi\in\Convc$ \\

Minkowski sum $K+L$ & Infimal convolution $\psi\square\varphi$\\

Volume $\vol_n(K)=\int_{\Rn}e^{-\ind_K(x)}\dint x$ & Total mass $J(\psi)=\int_{\Rn}e^{-\psi(x)}\dint x$\\

Outer normal directions $U\subseteq\Sp$
& Slope set $Y\subseteq \Rn$ \\

Support function $h_K(u)$ 
& Legendre--Fenchel transform $\Leg\psi(y)$ \\

Intersection of halfspaces 
$\displaystyle K=\bigcap_{u\in U} H^{-}(K,u)$
& Supremum of affine functions  
$\displaystyle \psi=\sup_{y\in Y} (\langle x,y\rangle-\Leg\psi(y))$ \\

Circumscribed ``polyhedral" set  $P(K,U)$
& Generalized outer linearization $q_{\psi,Y}$ \\

$\pos(U)=\Rn$
& $\exists Y_o\subseteq Y$ such that $\pos(Y_o)=\Rn$ and $Y_o\subseteq\dom(\Leg\psi)$ \\

Hausdorff convergence
& Epi-convergence \\

Euclidean balls $r B_n$
& Function-dependent balls $b_r(\psi)$ \\

Outer parallel body $K+rB_n$
& Outer parallel function $\psi \square b_r(\psi)$ \\

Hadwiger rotation means
& Rotation epi-means $T_{\mathbf{\Theta}_m}(\psi)$ \\

Ball $K_{\rot}$
& Rotational epi-symmetrization $\psi_{\rot}$ \\

Monotone concave functionals on $\Kn$
& Monotone concave functionals on $\Convc$ \\

Theorem~\ref{Schneider-main-result-A}
& Theorem~\ref{mainThmB} \\

Theorem~\ref{Schneider-main-result-B} & Theorem~\ref{mainThmA}\\
Urysohn's inequality & Theorem~\ref{urysohn-2}
\end{tabular}

\caption{\label{tab:corr}Dictionary between outer approximation theory for convex bodies and generalized outer linearizations of convex functions.}
\end{table}


\section{Background and preliminaries}
\label{sec:prelim}
For a convex function $\psi\colon \Rn\to(-\infty,\infty]$, let
\[
\Leg \psi(x)\coloneq\sup\nolimits_{y\in\Rn} \big(\langle x,y \rangle - \psi(y) \big),\quad x\in\Rn,
\]
denote the \textit{convex conjugate} or \textit{Legendre--Fenchel transform} of $\psi$. The result below can be found, for example, in \cite[Theorem 1.6.13]{RockafellarBook}.

\begin{lemma}
\label{le:conj_is_nice}
If $\psi\colon \Rn\to (-\infty,\infty]$ is a proper, lower semicontinuous, convex function, then $\Leg \psi$ defines a proper, lower semicontinuous, convex function on $\Rn$ and $\Leg\Leg \psi = \psi$.
\end{lemma}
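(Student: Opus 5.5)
The plan is the standard Fenchel--Moreau argument: realize $\psi$ as a supremum of affine minorants, and use that description twice — once to get properness, convexity and lower semicontinuity of $\Leg\psi$, and once to get $\Leg\Leg\psi=\psi$.

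\textbf{Convexity and lower semicontinuity of $\Leg\psi$.} By definition, $\Leg\psi$ is the pointwise supremum over $y\in\dom(\psi)$ of the affine functions $x\mapsto\langle x,y\rangle-\psi(y)$. A pointwise supremum of affine functions is convex and lower semicontinuous.

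\textbf{$\Leg\psi>-\infty$.} Since $\psi$ is proper, pick $y_0$ with $\psi(y_0)<\infty$. Then $\Leg\psi(x)\ge \langle x,y_0\rangle-\psi(y_0)>-\infty$ for every $x$.

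\textbf{$\Leg\psi\not\equiv\infty$.} This is equivalent to $\psi$ having an affine minorant: if $\psi\ge\langle\cdot,x_0\rangle-c$, then $\Leg\psi(x_0)\le c$. To produce such a minorant, I use separation on the epigraph $\epi\psi\subseteq\Rn\times\R$, which is a nonempty, closed, convex set because $\psi$ is lower semicontinuous, convex and proper. Take $y_0\in\dom\psi$ and a real $t<\psi(y_0)$. Then $(y_0,t)\notin\epi\psi$, so strict separation gives $(a,\beta)\in\Rn\times\R$ and $\gamma\in\R$ with
\[
\langle a,y\rangle+\beta s<\gamma<\langle a,y_0\rangle+\beta t \quad\text{for all }(y,s)\in\epi\psi .
\]
Letting $s\to\infty$ forces $\beta\le 0$. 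Testing the point $(y_0,\psi(y_0))$ gives $\beta(\psi(y_0)-t)<0$, so $\beta<0$. Dividing by $-\beta$ yields the desired affine minorant. Together with the previous step, $\Leg\psi$ is proper.

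\textbf{$\Leg\Leg\psi=\psi$.} The Fenchel--Young inequality $\langle x,y\rangle\le\psi(y)+\Leg\psi(x)$ gives $\Leg\Leg\psi\le\psi$ immediately. For the reverse inequality, it suffices to show that $\psi$ is the supremum of its affine minorants, because each affine minorant $\langle\cdot,x\rangle-c\le\psi$ satisfies $c\ge\Leg\psi(x)$ and is therefore $\le\Leg\Leg\psi$. Fix $y_0$ and any real $t<\psi(y_0)$, and separate $(y_0,t)$ from $\epi\psi$ as above. There are two cases.
\begin{enumerate}
\item If $y_0\in\dom\psi$, then $\beta<0$ as shown, and the normalized separating function is an affine minorant of $\psi$ whose value at $y_0$ exceeds $t$.
\item If $\psi(y_0)=\infty$, then $\beta$ may vanish (a vertical hyperplane). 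In that case $\langle a,y\rangle<\gamma<\langle a,y_0\rangle$ on $\dom\psi$. Let $\ell$ be any affine minorant, which exists by the previous step. For large $\lambda>0$, the function $\ell+\lambda(\langle a,\cdot\rangle-\gamma)$ is still an affine minorant of $\psi$, since the added term is negative on $\dom\psi$. Its value at $y_0$ tends to $\infty$ as $\lambda\to\infty$, so it eventually exceeds $t$.
\end{enumerate}
In both cases $\Leg\Leg\psi(y_0)>t$ for every $t<\psi(y_0)$, hence $\Leg\Leg\psi=\psi$.

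The main obstacle is the vertical-hyperplane case in the last step, which is exactly where the existence of at least one affine minorant must be used.
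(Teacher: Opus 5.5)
Your argument is correct. It is the standard Fenchel--Moreau proof: you show that a closed proper convex function is the supremum of its affine minorants by separating points from the closed epigraph, including the vertical-hyperplane case. The paper does not prove this lemma at all; it cites the textbook result, whose proof is this same argument. The only step you leave implicit is that in your case 2 a nonvertical separator ($\beta<0$) is normalized exactly as in case 1, which still goes through since $\beta\le 0$ only requires $\dom\psi\neq\emptyset$.
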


Lemma~\ref{le:conj_is_nice} shows that $\Leg$ is an involution, and we note that this map is also order-reversing (considering the partial order given by pointwise inequalities between functions). In fact, the Legendre--Fenchel transform is, essentially, the only order-reversing involution on the space of convex functions considered above \cite{Artstein-Milman2009}.

We also need the following criterion for a convex function to be coercive, which can be found in \cite[Theorem 11.8]{Rockafellar-Wets}. Here and throughout the paper,
\[
\dom(\psi)\coloneq\{x\in\Rn :\, \psi(x)<\infty\}
\]
denotes the \textit{domain} of a convex function $\psi$ and we write $o$ for the origin in $\Rn$.
\begin{lemma}
\label{le:conj_coercive}
A proper, lower semicontinuous, convex function $\psi\colon \Rn\to (-\infty,\infty]$ is coercive if and only if $o\in \interior(\dom(\Leg \psi))$.
\end{lemma}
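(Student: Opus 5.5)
We prove the two implications separately, using only the definitions of coercivity and of $\Leg$ together with Lemma~\ref{le:conj_is_nice}, so that $\Leg\psi$ is proper, lower semicontinuous and convex and $\Leg\Leg\psi=\psi$. The key identity is
\[
\psi(x)=\Leg\Leg\psi(x)=\sup_{y\in\Rn}\big(\langle x,y\rangle-\Leg\psi(y)\big),
\]
which gives lower bounds on $\psi$ from finiteness of $\Leg\psi$ at chosen points $y$.

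\emph{Sufficiency.} Assume $o\in\interior(\dom(\Leg\psi))$. Then there is $\varepsilon>0$ with $\varepsilon B_n\subseteq\dom(\Leg\psi)$. A finite convex function on an open set is continuous, so $\Leg\psi$ is bounded above on the compact ball $\tfrac{\varepsilon}{2}B_n$, say by $M<\infty$. For $x\ne o$, choose $y=\tfrac{\varepsilon}{2}\,x/|x|$ in the identity above. This gives
\[
\psi(x)\ge \tfrac{\varepsilon}{2}|x|-M,
\]
which tends to $\infty$ as $|x|\to\infty$. So $\psi$ is coercive, and in fact has at least linear growth.

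\emph{Necessity.} Assume $\psi$ is coercive. We first upgrade coercivity to linear growth. Since $\psi$ is lower semicontinuous and coercive, it attains its minimum $m$, so $\psi\ge m$ everywhere. Coercivity also gives $R>0$ with $\psi(x)\ge m+1$ for $|x|\ge R$. Fix $x_0\in\dom\psi$ with $\psi(x_0)=m$, and write $x=x_0+t u$ with $|u|=1$ and $t\ge 2R+2|x_0|$. Apply convexity along the segment from $x_0$ to $x$, at the point $z=x_0+s u$ with $s=R+|x_0|$, so that $|z|\ge R$. This yields
\[
m+1\le\psi(z)\le (1-s/t)\,m+(s/t)\,\psi(x),
\]
hence $\psi(x)\ge m+t/s$. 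Combining this with $\psi\ge m$ everywhere, we obtain constants $a>0$ and $b\in\R$ such that $\psi(x)\ge a|x|-b$ for all $x\in\Rn$.

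Now let $|y|<a$. Then
\[
\Leg\psi(y)\le\sup_x\big(|y||x|-a|x|+b\big)=b<\infty.
\]
Hence $\interior(aB_n)\subseteq\dom(\Leg\psi)$, and so $o\in\interior(\dom(\Leg\psi))$.

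The main obstacle is the upgrade from mere coercivity to linear growth. It needs existence of a minimizer, or at least a lower bound on $\psi$, which follows from lower semicontinuity and coercivity via compactness of sublevel sets. It also needs careful uniform choices in the convexity estimate so that the constants do not depend on the direction $u$. The remaining steps are routine consequences of conjugate duality.
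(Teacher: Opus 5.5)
Your proof is correct, and it takes a different route from the paper in the sense that the paper gives no argument of its own: it simply quotes \cite[Theorem 11.8]{Rockafellar-Wets}.

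In Rockafellar--Wets that equivalence is stated for \emph{level-coercivity}, i.e.\ $\liminf_{|x|\to\infty}\psi(x)/|x|>0$. Passing to the paper's weaker-looking notion $\lim_{|x|\to\infty}\psi(x)=\infty$ needs a separate fact from Chapter~3 of that book: for proper, lower semicontinuous, convex functions, level-boundedness already forces linear growth. Your convexity estimate along segments issuing from a minimizer $x_0$ is a direct proof of exactly this bridging fact. Your two duality steps replace the horizon-function machinery behind the cited theorem: the Fenchel--Young bound with $y=\tfrac{\varepsilon}{2}x/|x|$ for sufficiency, and the bound $\Leg\psi(y)\le b$ for $|y|<a$ for necessity.

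The citation buys brevity and places the lemma in a general framework. Your route is self-contained and yields explicit constants. From the necessity part you get $\interior(aB_n)\subseteq\dom(\Leg\psi)$ with $a=1/(R+|x_0|)$, i.e.\ the quantitative bound $r(\psi)\ge 1/(R+|x_0|)$ for the radius $r(\psi)$ used throughout the paper. From the sufficiency part you get the linear minorant $\psi(x)\ge\tfrac{\varepsilon}{2}|x|-M$ whenever $\varepsilon B_n\subseteq\dom(\Leg\psi)$.

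A minor remark: in the sufficiency direction you only use the Fenchel--Young inequality $\psi(x)+\Leg\psi(y)\ge\langle x,y\rangle$, which follows directly from the definition of $\Leg$. The biconjugation identity $\Leg\Leg\psi=\psi$ is not needed there.
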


For $\psi\in\Convc$, we define
\begin{equation}
\label{eq:def_r_psi}
r(\psi)\coloneq\sup\{r\geq 0:\, rB_n \subseteq \dom(\Leg \psi)\}\in (0,\infty].
\end{equation}
If $r(\psi)=\infty$, then clearly $\dom(\Leg \psi)=\Rn$. If  $r(\psi)$ is finite, then it is the radius of the largest, open, centered ball that is contained in $\dom(\Leg \psi)$. In particular, Lemma~\ref{le:conj_coercive} shows that $r(\psi)$ is well-defined and strictly positive. We will repeatedly need to consider the Euclidean unit ball with radius $r(\psi)$, and we use the convention $r(\psi)B_n=\Rn$ in the case $r(\psi)=\infty$.

\medskip

Next, we consider the interplay between convex conjugation and infimal convolution. The following is a consequence of \cite[Theorem 1.6.17]{SchneiderBook} together with the definition of $\Convc$.
\begin{lemma}
\label{le:infconv_conj}
If $\psi_1,\psi_2\in \Convc$, then $\Leg(\psi_1 \infconv \psi_2) = \Leg \psi_1 + \Leg \psi_2$.
\end{lemma}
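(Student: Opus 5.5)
The identity itself comes from a direct computation with the definition of $\Leg$; the only care needed is with infinite values. Fix $x\in\Rn$. By definition,
\[
\Leg(\psi_1\infconv\psi_2)(x)=\sup_{y\in\Rn}\Big(\langle x,y\rangle-\inf_{z\in\Rn}\big(\psi_1(y-z)+\psi_2(z)\big)\Big)=\sup_{y,z\in\Rn}\Big(\langle x,y\rangle-\psi_1(y-z)-\psi_2(z)\Big).
\]
Writing $\langle x,y\rangle=\langle x,y-z\rangle+\langle x,z\rangle$ and substituting $w=y-z$ turns the double supremum into
\[
\sup_{w,z\in\Rn}\Big(\big(\langle x,w\rangle-\psi_1(w)\big)+\big(\langle x,z\rangle-\psi_2(z)\big)\Big).
\]
The variables now separate, so this equals $\Leg\psi_1(x)+\Leg\psi_2(x)$.

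The step that needs justification is pulling the infimum out as a supremum and splitting the sum without meeting $\infty-\infty$. Since $\psi_1,\psi_2$ are proper, $\Leg\psi_i(x)>-\infty$ for every $x$. Hence the sum $\Leg\psi_1(x)+\Leg\psi_2(x)$ is well defined in $(-\infty,\infty]$, and the supremum of a sum of two independent families equals the sum of the suprema. Moreover, each $\psi_i\in\Convc$ is bounded from below: it is coercive and lower semicontinuous, so it attains a minimum. Consequently $\psi_1\infconv\psi_2>-\infty$ everywhere, and each term $\psi_1(y-z)+\psi_2(z)$ lies in $(-\infty,\infty]$. Thus $-\inf$ may be rewritten as $\sup$ of the negatives, and the first displayed equality is legitimate.

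I would also check that $\psi_1\infconv\psi_2\in\Convc$, as the notation of the paper presumes. Convexity of an infimal convolution is standard. Properness follows from $(\psi_1\infconv\psi_2)(x_1+x_2)\le\psi_1(x_1)+\psi_2(x_2)<\infty$ for $x_i\in\dom(\psi_i)$. For lower semicontinuity and coercivity, I would go through the conjugate. By Lemma~\ref{le:conj_coercive}, $o\in\interior(\dom(\Leg\psi_1))\cap\interior(\dom(\Leg\psi_2))$. Hence the function $\Leg\psi_1+\Leg\psi_2$, which is lower semicontinuous and convex by Lemma~\ref{le:conj_is_nice}, is proper and has $o$ in the interior of its domain. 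Its conjugate is therefore coercive by Lemma~\ref{le:conj_coercive}. I would then show directly that $\psi_1\infconv\psi_2$ is lower semicontinuous, so that by Lemma~\ref{le:conj_is_nice} and the identity above it coincides with that conjugate.

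The lower semicontinuity is the main obstacle. I would argue it as follows. Take $x_k\to x$ with $(\psi_1\infconv\psi_2)(x_k)$ bounded from above. By boundedness from below and coercivity of both functions, near-minimizers $z_k$ in the definition of the infimal convolution stay bounded. Passing to a convergent subsequence and using lower semicontinuity of $\psi_1,\psi_2$ then gives the required inequality.
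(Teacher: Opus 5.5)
Your proof is correct, but it takes a different route from the paper. The paper gives no argument of its own: it invokes \cite[Theorem 1.6.17]{SchneiderBook}, the standard duality between infimal convolution and addition of conjugates, ``together with the definition of $\Convc$.'' You instead verify the identity by hand, rewriting $-\inf$ as $\sup$ and separating variables via $w=y-z$. This shows that the formula itself uses only properness of $\psi_1,\psi_2$. Your appeal to boundedness from below at that step is harmless but not needed: $-\inf_z a_z=\sup_z(-a_z)$ holds in $[-\infty,\infty]$, and each $\psi_1(y-z)+\psi_2(z)$ already lies in $(-\infty,\infty]$.

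Your additional work shows that $\Convc$ is closed under $\infconv$. You get lower semicontinuity from boundedness of near-minimizers, and coercivity by applying Lemma~\ref{le:conj_coercive} to the conjugate of $\Leg\psi_1+\Leg\psi_2$. The paper takes this closure for granted when it writes $\psi_1\infconv\psi_2\in\Convc$, and it is presumably what the phrase ``the definition of $\Convc$'' refers to. This is also where coercivity genuinely matters. For general closed proper convex functions the infimal convolution need not be lower semicontinuous, for instance for indicators of two closed unbounded convex sets whose Minkowski sum is not closed, even though the conjugate identity still holds.

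So the citation is shorter. Your version is self-contained, isolates which hypotheses each part of the statement actually uses, and as a byproduct shows that the infimum defining $\psi_1\infconv\psi_2$ is attained.
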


We furthermore need to consider the action of the Legendre--Fenchel transform on support functions of convex bodies. For every $K\in\Kn$, we have
\[
\Leg h_K (x) = \ind_K(x) \coloneq\begin{cases}
    0\quad &\text{if } x\in K,\\
    \infty \quad &\text{else},
\end{cases}
\]
which is the \emph{convex indicator function} of $K$. The special case $K=\{y\}$ with $y\in\Rn$ gives
\begin{equation}
\label{eq:conj_linear_fct}
\Leg(x\mapsto \langle x,y\rangle) = \Leg h_{\{y\}}= \ind_{\{y\}}.
\end{equation}

We consider convex functions on $\Rn$ together with the topology associated with \textit{epi-convergence} (see, for example, \cite[Chapter 7]{Rockafellar-Wets}). Here, we say that a sequence of functions $\psi_j\colon\Rn\to (-\infty,\infty]$, $j\in\N$, \emph{epi-converges} to $\psi\colon\Rn\to(-\infty,\infty]$ if for every $x\in\Rn$:
\begin{itemize}
    \item $\psi(x)\leq \liminf_{j\to\infty}\psi_j(x_j)$ for every sequence $x_j\in\Rn$, $j\in\N$, such that $x_j\to x$, and
    \item $\psi(x)=\lim_{j\to\infty}\psi_j(x_j)$ for some sequence $x_j\in\Rn$, $j\in\N$, such that $x_j\to x$.
\end{itemize}
In this case, we write $\psi=\epilim_{j\in\N} \psi_j$. On $\Convc$, epi-convergence is roughly equivalent to Hausdorff convergence of level sets (see \cite[Lemma 5]{CLM-IMRN} and \cite[Theorem 3.1]{Beer-Rockafellar-Wets-1992}). More generally, we need the following result, which can be found in \cite[Theorem 7.17]{Rockafellar-Wets}.
\begin{lemma}
\label{le:epi_conv_pointwise}
Let $\psi_j\colon \Rn\to(-\infty,\infty]$, $j\in\N$, be a sequence of convex functions. If $\psi\colon \Rn\to(-\infty,\infty]$ is a lower semicontinuous, convex function such that $\dom(\psi)$ has nonempty interior, then $\psi=\epilim_{j\in\N} \psi_j$ if and only if $\psi_j$ converges pointwise to $\psi$ on a dense subset of $\Rn$. Equivalently, $\psi_j$ converges uniformly to $\psi$ on every compact set that does not contain a boundary point of $\dom(\psi)$.
\end{lemma}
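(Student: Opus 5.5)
This is \cite[Theorem 7.17]{Rockafellar-Wets}, so we only need to recall why it holds. The argument rests on two facts about convex functions. First, they are locally bounded above near any point lying in the interior of the convex hull of finitely many points where they are bounded. Second, a family of convex functions that is locally uniformly bounded above and below is locally equi-Lipschitz. Throughout, set $C=\dom(\psi)$. Since $C$ is convex with nonempty interior, $\partial C$ is nowhere dense. Hence $D_0:=\interior(C)\cup(\Rn\setminus \overline{C})$ is dense, and any dense set $D$ meets both open pieces densely.

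\emph{Epi-convergence $\Rightarrow$ pointwise convergence on $D_0$.} For $x\notin\overline{C}$, the liminf condition with $x_j\equiv x$ gives $\psi_j(x)\to\infty=\psi(x)$. For $x\in\interior(C)$, the liminf condition gives $\liminf\psi_j(x)\ge\psi(x)$. For the reverse inequality:
\begin{itemize}
\item choose a simplex with vertices $v_0,\dots,v_n\in\interior(C)$ containing $x$ in its interior;
\item take recovery sequences $v_i^j\to v_i$ with $\psi_j(v_i^j)\to\psi(v_i)$;
\item for large $j$, write $x=\sum_i\lambda_i^j v_i^j$ with $\lambda_i^j\to\lambda_i$;
\item convexity then gives $\limsup\psi_j(x)\le\sum_i\lambda_i\psi(v_i)$.
\end{itemize}
Shrinking the simplex toward $x$ and using continuity of $\psi$ on $\interior(C)$ yields $\limsup\psi_j(x)\le\psi(x)$.

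\emph{Pointwise convergence on a dense $D$ $\Rightarrow$ epi-convergence and locally uniform convergence.}

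On a compact $K\subseteq\interior(C)$, the simplex argument with vertices in $D\cap\interior(C)$ bounds $\psi_j$ uniformly from above on a neighborhood of $K$ for large $j$. A lower bound follows by writing points of $D$ as midpoints and using convexity. The family is therefore equi-Lipschitz on $K$, and pointwise convergence on the dense set $D$ upgrades to uniform convergence on $K$.

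On a compact $K\subseteq\Rn\setminus\overline{C}$, we need $\inf_K\psi_j\to\infty$. Fix $z\in D\cap\interior(C)$, where the $\psi_j(z)$ are bounded. For $p\in K$, pick a small simplex of points of $D$ outside $\overline{C}$ surrounding a point $w$ slightly beyond $p$ on the ray from $z$. Then $p$ is a convex combination of $z$ and a point of that simplex, and conversely each simplex vertex is a convex combination of $p$ and points near $z$. Were $\psi_j(p_j)$ to stay bounded along a subsequence, convexity would bound $\psi_j$ at some vertex of the simplex, contradicting divergence at points of $D$.

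Together these give the liminf condition at every $x\in D_0$, and recovery sequences there are constant.

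\emph{Boundary points $x\in\partial C$.} Fix $z\in\interior(C)$ and set $x_t=x+t(z-x)\in\interior(C)$ for $t\in(0,1]$. Lower semicontinuity and convexity give $\psi(x_t)\to\psi(x)$ as $t\downarrow0$, with the value $+\infty$ allowed. Combining this with the locally uniform convergence near each $x_t$, a diagonal choice $t_j\downarrow0$ slowly produces a recovery sequence $\psi_j(x_{t_j})\to\psi(x)$.

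The liminf inequality at $x$ is the step I expect to be the main obstacle. The idea is to propagate bounds from nearby interior points through convexity. Given $x_j\to x$, for each fixed $t$ we have $\psi_j(x_j+t(z-x_j))\le(1-t)\psi_j(x_j)+t\psi_j(z)$. The left side converges to $\psi(x_t)$ by uniform convergence near $x_t$. Letting $j\to\infty$ and then $t\to0$ gives $\psi(x)\le\liminf\psi_j(x_j)$.

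The same estimates also give the "equivalently" clause about uniform convergence on compact sets avoiding $\partial C$.
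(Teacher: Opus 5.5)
The paper gives no argument of its own for this lemma; it simply quotes \cite[Theorem 7.17]{Rockafellar-Wets}. Your outline reconstructs that theorem, and most of it is sound:
\begin{itemize}
\item the simplex/recovery-sequence argument giving $\limsup_j\psi_j(x)\le\psi(x)$ on $\interior(C)$;
\item the upper and lower bounds and the equi-Lipschitz upgrade on compact subsets of $\interior(C)$;
\item the diagonal recovery sequence along $x_t$ at boundary points;
\item the boundary liminf estimate via $\psi_j(x_j+t(z-x_j))\le(1-t)\psi_j(x_j)+t\psi_j(z)$, which is in fact routine rather than the main obstacle.
\end{itemize}
For the ``equivalently'' clause, you should say explicitly that on compacts outside $\overline{C}$ uniform convergence means $\inf_K\psi_j\to\infty$. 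You should also note that the converse direction is immediate by taking singletons $\{x\}$ with $x\in D_0$.

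The step that fails as written is the uniform divergence on a compact $K\subseteq\Rn\setminus\overline{C}$. You place the simplex around a point $w$ \emph{beyond} $p$ on the ray from $z$, so that $p\in[z,w]$.
\begin{itemize}
\item Convexity then only yields $\psi_j(p)\le(1-\lambda)\psi_j(z)+\lambda\psi_j(w)$. This is an upper bound on $\psi_j(p)$, which is the wrong direction.
\item Your ``conversely'' claim is false. Convex combinations of $p$ with points near $z$ lie on the $z$-side of $p$: writing a vertex $v$ near $w$ as $(1-\mu)p+\mu q$ with $q$ near $z$ forces $\mu<0$.
\end{itemize}
So no vertex value is controlled by $\psi_j(p_j)$.

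The geometry has to be reversed, as follows.
\begin{itemize}
\item By your interior step, fix $\rho>0$ and $M$ with $\psi_j\le M$ on $B(z,\rho)\subseteq\interior(C)$ for all large $j$. Boundedness at the single point $z$ is not enough.
\item Since $p\notin\overline{C}$, for small $s\in(0,1)$ the ball $B((1-s)p+sz,s\rho/2)$ misses $\overline{C}$. By density it contains some $d\in D$, i.e.\ $d=(1-s)p+sq$ with $q\in B(z,\rho/2)$, and $\psi_j(d)\to\infty$.
\item Suppose $p_j\in K$ with $\psi_j(p_j)\le M'$ along a subsequence, and pass to a further subsequence with $p_j\to p$.
\item Once $|p_j-p|<s\rho/(2(1-s))$, we have $d=(1-s)p_j+sq'$ with $q'=q+\frac{1-s}{s}(p-p_j)\in B(z,\rho)$.
\item Hence $\psi_j(d)\le\max\{M',M\}$, contradicting $\psi_j(d)\to\infty$.
\end{itemize}
With this fix your proof is complete; a single point $d$ suffices, and no simplex is needed.
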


Lastly, we state \cite[Theorem 11.34]{Rockafellar-Wets}, which shows that convex conjugation is continuous with respect to epi-convergence.
\begin{lemma}
\label{le:conj_continuous}
If $\psi,\psi_j\colon \Rn\to(-\infty,\infty]$, $j\in\N$, are lower semicontinuous, proper, convex functions, then $\psi=\epilim_{j\in\N} \psi_j$ if and only if $\Leg \psi = \epilim_{j\in\N} \Leg \psi_j$.
\end{lemma}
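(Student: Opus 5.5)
The plan is to reduce the statement to a characterization of epi-convergence via Moreau envelopes, where conjugation acts in an explicit and symmetric way. Since $\Leg$ is an involution on proper, l.s.c., convex functions (Lemma~\ref{le:conj_is_nice}), it suffices to prove one implication. Indeed, if $\psi_j\to\psi$ epi implies $\Leg\psi_j\to\Leg\psi$ epi, then applying this to $\Leg\psi_j$ and $\Leg\psi$ gives the converse, because $\Leg\Leg=\mathrm{id}$.

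For $\lambda>0$ and a proper, l.s.c., convex $\varphi$, let
\[
e_\lambda\varphi(x)\coloneq\inf_{z\in\Rn}\Big(\varphi(z)+\tfrac{1}{2\lambda}|x-z|^2\Big).
\]
This is the Moreau envelope, i.e.\ the infimal convolution of $\varphi$ with $\frac{1}{2\lambda}|\cdot|^2$. It is finite and convex, and in fact $C^1$. The key algebraic identity is Moreau's decomposition
\[
e_1\varphi(x)+e_1(\Leg\varphi)(x)=\tfrac12|x|^2,\qquad x\in\Rn,
\]
and more generally $e_\lambda\varphi(x)+e_{1/\lambda}(\Leg\varphi)(x/\lambda)=\frac{|x|^2}{2\lambda}$. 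I would derive this from $\Leg(\varphi\infconv\tfrac12|\cdot|^2)=\Leg\varphi+\tfrac12|\cdot|^2$, which is the conjugate-of-infimal-convolution rule used in Lemma~\ref{le:infconv_conj}, valid here since $\frac12|\cdot|^2$ is finite everywhere. One then evaluates the resulting sup at the proximal point.

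The second step is the Attouch-type characterization: for proper, l.s.c., convex $\psi_j,\psi$, epi-convergence $\psi_j\to\psi$ is equivalent to pointwise convergence $e_\lambda\psi_j\to e_\lambda\psi$ on $\Rn$ for every (equivalently, one) $\lambda>0$. Combined with the identity, this immediately gives
\[
e_1\psi_j\to e_1\psi \ \text{pointwise} \iff e_1\Leg\psi_j\to e_1\Leg\psi\ \text{pointwise},
\]
which proves the lemma.

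For the characterization I would argue as follows. If $\psi_j\to\psi$ epi, first show that the $\psi_j$ admit a common affine minorant for large $j$. Since the limit is proper, the liminf inequality applied near a point of $\dom\psi$ prevents escape to $-\infty$, and a compactness argument on level sets gives a uniform quadratic lower bound $\psi_j(z)\geq -a-b|z|$. Consequently, the infima defining $e_\lambda\psi_j(x)$ can be restricted to a fixed bounded set. The upper bound $\limsup e_\lambda\psi_j(x)\leq e_\lambda\psi(x)$ comes from recovery sequences at the minimizer. The lower bound comes from the liminf inequality along (subsequences of) minimizers, which stay bounded by the uniform minorant.

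Conversely, suppose the envelopes converge pointwise. They are finite convex functions, so by Lemma~\ref{le:epi_conv_pointwise} they converge locally uniformly. One then recovers $\psi$ from the envelopes via $\psi=\sup_\lambda e_\lambda\psi$, using a standard $\Gamma$-type argument: the liminf inequality follows from $\psi_j(x_j)\geq e_\lambda\psi_j(x_j)$ and letting $\lambda\to0$, while recovery sequences are obtained from the proximal points $\mathrm{prox}_{\lambda}\psi_j(x)$.

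The main obstacle I expect is the uniform minorant and boundedness of the minimizers in the forward direction, i.e.\ ruling out mass escaping to $-\infty$ or to infinity. This is exactly where properness of the limit is essential. My first attempt would use the fact that the limit is proper and convex: pick $x_0$ with $\psi(x_0)$ finite, and use the liminf condition on the sphere $x_0+S^{n-1}$ together with convexity to get a uniform linear lower bound outside a ball.
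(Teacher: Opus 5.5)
The paper does not prove this lemma; it quotes it from Rockafellar--Wets (Theorem~11.34, Wijsman's theorem). Your proposal is correct and is essentially the standard proof of that theorem, as given in the cited source. It combines Attouch's characterization of epi-convergence of convex functions by pointwise convergence of Moreau envelopes with the identity $e_\lambda\varphi(x)+e_{1/\lambda}(\Leg\varphi)(x/\lambda)=|x|^2/(2\lambda)$. Compared with the paper's bare citation, your route is self-contained, at the cost of having to prove Attouch's theorem.

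Several points need tightening.

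\emph{(i) One $\lambda$ versus all $\lambda$.} Your displayed equivalence uses only $\lambda=1$, which is the one-parameter form of Attouch's theorem. But your sketch of the converse lets $\lambda\to0^+$, both in $\psi_j(x_j)\ge e_\lambda\psi_j(x_j)$ and in the proximal recovery sequences, so it needs convergence of the envelopes for all small $\lambda$. The fix is to run everything with all $\lambda>0$: the general identity turns convergence of $e_\lambda\psi_j$ for all $\lambda$ into convergence of $e_\mu\Leg\psi_j$ for all $\mu>0$.

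\emph{(ii) Deriving the identity.} The rule $\Leg(\varphi\infconv\tfrac12|\cdot|^2)=\Leg\varphi+\tfrac12|\cdot|^2$ holds for every proper $\varphi$ directly from the definitions. Do not justify it via Lemma~\ref{le:infconv_conj}, which is stated only on $\Convc$, while $\varphi$ here need not be coercive. Conjugating back is legitimate because $e_1\varphi$ is finite and convex, and completing the square then gives the identity; no evaluation at the proximal point is needed.

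\emph{(iii) The uniform minorant.} The liminf condition plus compactness gives a uniform lower bound for $\psi_j$ on a ball around $x_0$. Pushing this outward by convexity also requires an upper bound at the center, namely $\psi_j(x_j)\le\psi(x_0)+1$ along a recovery sequence $x_j\to x_0$. What you obtain is the linear bound $\psi_j(z)\ge -a-b|z|$, not a quadratic one. Likewise, the uniform boundedness of the minimizers for $e_\lambda\psi_j(x)$ uses $e_\lambda\psi_j(x)\le\psi_j(x_j)+|x-x_j|^2/(2\lambda)$.

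\emph{(iv) Recovery sequences in the converse.} Before diagonalizing in $(j,\lambda)$, you need control of $\mathrm{prox}_\lambda\psi_j(x)$ for large $j$. One way: $\nabla e_\lambda\psi_j(x)=(x-\mathrm{prox}_\lambda\psi_j(x))/\lambda\to\nabla e_\lambda\psi(x)$, since pointwise convergent differentiable convex functions have convergent gradients. Combine this with $\mathrm{prox}_\lambda\psi(x)\to x$ as $\lambda\to0^+$ for $x\in\dom\psi$, and with $\limsup_j\psi_j(\mathrm{prox}_\lambda\psi_j(x))\le e_\lambda\psi(x)\le\psi(x)$.
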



\section{Rotation epi-means and rotational epi-symmetrizations}\label{sec:epi-means-section}
Rotation epi-means and rotational epi-symmetrizations were introduced in \cite[Section 4.1]{CLM-Hadwiger1} for super-coercive convex functions on $\Rn$. We will treat these constructions thoroughly in the following, since additional technicalities arise on the larger space of coercive convex functions, and since in \cite{CLM-Hadwiger1} no detailed proofs were given.

In the next result, we use integration with respect to the Haar probability measure on $\SOn$. 

\begin{lemma}
    \label{le:conj_rot_epi-symm}
    Let $n\geq 2$. For every $\psi\in\Convc$, there exists a unique function $\psi_\rot\in\Convc$, the \emph{rotational epi-symmetrization} of $\psi$, such that
    \begin{equation}
    \label{eq:rot_mean_conj}
    \Leg\psi_\rot(x)= \int_{\SOn} \Leg\psi(\vartheta^{-1} x) \dint \vartheta
    \end{equation}
    for $x\in\Rn$.
\end{lemma}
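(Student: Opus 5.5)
Define $\varphi(x)\coloneq\int_{\SOn}\Leg\psi(\vartheta^{-1}x)\dint\vartheta$ and show that $\varphi$ is a proper, l.s.c., convex function with $o\in\interior(\dom\varphi)$. Then set $\psi_\rot\coloneq\Leg\varphi$. By Lemma~\ref{le:conj_is_nice}, $\psi_\rot$ is proper, l.s.c.\ and convex, and $\Leg\psi_\rot=\Leg\Leg\varphi=\varphi$, which is \eqref{eq:rot_mean_conj}. By Lemma~\ref{le:conj_coercive}, $\psi_\rot$ is coercive, so $\psi_\rot\in\Convc$. Uniqueness holds because any two functions in $\Convc$ with the same conjugate coincide, again by $\Leg\Leg=\mathrm{id}$ (Lemma~\ref{le:conj_is_nice}).

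The first step is to check that the integral makes sense. By Lemma~\ref{le:conj_is_nice}, $\Leg\psi$ is l.s.c.\ and proper, and $\Leg\psi(y)\ge \langle y,x_0\rangle-\psi(x_0)$ for any $x_0\in\dom\psi$. Hence, for fixed $x$, the integrand $\vartheta\mapsto\Leg\psi(\vartheta^{-1}x)$ is l.s.c.\ (so Borel) on $\SOn$ and bounded below by $-|x||x_0|-\psi(x_0)$. The integral therefore exists in $(-\infty,\infty]$.

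Convexity of $\varphi$ follows by integrating the pointwise convexity inequality. Lower semicontinuity follows from Fatou's lemma: if $x_j\to x$, then $\liminf_j\Leg\psi(\vartheta^{-1}x_j)\ge\Leg\psi(\vartheta^{-1}x)$ for each $\vartheta$, and the integrands are uniformly bounded below on bounded sets by the affine minorant above.

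The remaining point, which I expect to be the main obstacle, is finiteness of $\varphi$ near $o$. By Lemma~\ref{le:conj_coercive}, $o\in\interior\dom(\Leg\psi)$, so $\varepsilon B_n\subseteq\dom(\Leg\psi)$ for some $0<\varepsilon<r(\psi)$. A convex function finite on a neighbourhood of the compact set $\varepsilon B_n$ is continuous there, hence bounded by some $M$ on $\varepsilon B_n$. Since $\vartheta^{-1}$ preserves $\varepsilon B_n$, we get $\varphi\le M$ on $\varepsilon B_n$. Thus $\varphi$ is proper and $o\in\interior\dom\varphi$; in fact $\dom\varphi\supseteq r(\psi)\interior B_n$.

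The subtle part is that $\varphi$ may be $+\infty$ at points where $\Leg\psi$ is finite for some but not all rotations. This causes no difficulty: we only need properness and an interior point at $o$, never finiteness everywhere. The construction is the dual, integral form of rotation averaging: by Lemma~\ref{le:infconv_conj}, the conjugate of the rotation epi-mean \eqref{eq:def_rot_epi-mean} is the finite average $\frac1m\sum_i\Leg\psi(\vartheta_i^{-1}\cdot)$, and $\varphi$ is its continuous counterpart.
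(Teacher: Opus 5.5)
Your proposal is correct and follows the paper's proof essentially step by step. Both arguments define $\varphi$ by the integral, check that it is well defined with values in $(-\infty,\infty]$, convex, lower semicontinuous via Fatou's lemma, and finite on $\interior(r(\psi)B_n)$, and then conclude with Lemma~\ref{le:conj_is_nice} and Lemma~\ref{le:conj_coercive}. The differences are cosmetic: you get the uniform lower bound from the affine minorant $y\mapsto\langle y,x_0\rangle-\psi(x_0)$ of $\Leg\psi$ rather than from $\min_{RB_n}\Leg\psi$, and you spell out the continuity-plus-compactness argument for boundedness near $o$ and the uniqueness via $\Leg\Leg=\mathrm{id}$, both of which the paper leaves implicit.
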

\begin{proof}
Throughout the proof, fix $\psi\in \Convc$ and let $\varphi\colon \Rn\to(-\infty,\infty]$ denote the map defined by the integral on the right-hand side of \eqref{eq:rot_mean_conj}. By Lemma~\ref{le:conj_is_nice} and Lemma~\ref{le:conj_coercive}, we need to show that $\varphi$ is well-defined and that it is a proper, lower semicontinuous, convex function with $o\in \interior(\dom(\varphi))$.

Since $\Leg \psi$ is a proper convex function, it does not attain the value $-\infty$. Thus, for every $x\in\Rn$, the map $\vartheta\mapsto \Leg\psi(\vartheta^{-1} x)$ is measurable and takes values in $(-\infty,\infty]$. Since $\SOn$ is compact, the negative part of this function has finite integral and $\eqref{eq:rot_mean_conj}$ is well-defined with values in $(-\infty,\infty]$.

Next, let $\lambda\in(0,1)$ and $x,y\in\Rn$. Assume first that $\varphi(x),\varphi(y)<\infty$, which is only possible if $\vartheta^{-1}x,\vartheta^{-1}y\in \dom(\Leg \psi)$ for a.e.\ $\vartheta\in\SOn$. Since $\dom(\Leg \psi)$ is convex, it follows that also $\vartheta^{-1}(\lambda x + (1-\lambda)y))\in \dom(\Leg \psi)$ for a.e.\ $\vartheta\in\SOn$. Thus, by the convexity of $\Leg \psi$,
\begin{align*}
\varphi(\lambda x + (1-\lambda)y) &= \int_{\SOn} \Leg \psi (\vartheta^{-1}[\lambda x + (1-\lambda)y]) \dint\vartheta\\
&= \int_{\SOn} \Leg \psi(\lambda \vartheta^{-1}x + (1-\lambda) \vartheta^{-1}y) \dint \vartheta\\
&\leq \int_{\SOn} \Big( \lambda \Leg \psi(\vartheta^{-1} x) + (1-\lambda) \Leg \psi(\vartheta^{-1} y) \Big) \dint \vartheta\\
&= \lambda \varphi(x) + (1-\lambda) \varphi(y).
\end{align*}
In the remaining case $\varphi(x)=\infty$ or $\varphi(y)=\infty$, and we trivially have 
\[
\varphi(\lambda x + (1-\lambda)y) \leq \lambda \varphi(x)+(1-\lambda)\varphi(y)=\infty.
\]
Hence, $\varphi$ is convex.

Since $\psi$ is coercive, it follows from Lemma~\ref{le:conj_coercive} that $o\in\interior(\dom(\Leg \psi))$ and $r(\psi)>0$ (cf.\ \eqref{eq:def_r_psi}). Thus, it  follows from \eqref{eq:rot_mean_conj} that $\varphi$ is finite on $\interior(r(\psi) B_n)$. In particular, we have $o\in \interior(\dom(\varphi))$ and $\varphi$ is proper.

It remains to show that $\varphi$ is lower semicontinuous. Fix $\bar{x}\in \Rn$ and let $x_j$, $j\in\N$, be a sequence in $\Rn$ with limit $\bar{x}$. We can find $R>0$ such that $x_j\in R B_n$ for every $j\in\N$, and by the properties of $\Leg \psi$ there exists $m=\min_{x\in R B_n} \Leg \psi(x)\in\R$. It is easy to see that $\Leg \psi-m$ is nonnegative on $R B_n$, and by \eqref{eq:rot_mean_conj} also $\varphi-m$ is nonnegative on $R B_n$. For $j\in\N$, we now define $f_j\colon \SOn\to \R$ as
\[
f_j(\vartheta)=\Leg \psi(\vartheta^{-1} x_j) - m.
\]
Observe that since $\vartheta^{-1}x_j\in R B_n$ for every $\vartheta\in \SOn$ and $j\in\N$, the functions $f_j$ are nonnegative due to our choice of $m$. Thus, by Fatou's lemma and the lower semicontinuity of $\Leg \psi$, we obtain
\begin{align*}
\liminf_{j\in\N} \varphi(x_j) &= \liminf_{j\in\N} \int_{\SOn} f_j(\vartheta) \dint \vartheta + m\\
&\geq \int_{\SOn} \liminf_{j\in\N} f_j(\vartheta) \dint\vartheta + m\\
&= \int_{\SOn} \liminf_{j\in\N} \left(\Leg \psi(\vartheta^{-1} x_j) - m \right) \dint \vartheta + m\\
&\geq \int_{\SOn} \left(\Leg \psi(\vartheta^{-1} \bar{x}) - m \right)\dint\vartheta + m\\
&= \varphi(\bar{x}),
\end{align*}
which shows that $\varphi$ is lower semicontinuous.
\end{proof}

\begin{remark}
    The statement of Lemma~\ref{le:conj_rot_epi-symm} remains valid in the one-dimensional case if we replace Haar averaging on ${\rm SO}(1)$ with averaging over $\Oo=\{\pm 1\}$. Indeed, for $n=1$, it follows from \eqref{eq:def_rot_onedim} together with Lemma~\ref{le:infconv_conj} that for $x\in\R$,
    \[
    \Leg \psi_{\rot}(x)=\frac{\Leg \psi(x)+\Leg \psi(-x)}{2}.
    \]
\dssymb
\end{remark}

Recall that for $\psi\in\Convc$ and $\mathbf{\Theta}_m=\{\vartheta_1,\ldots,\vartheta_m\}\subseteq\SOn$ with $m\in\N$, the \emph{rotation epi-mean} $T_{\mathbf{\Theta}_m}(\psi)\in\Convc$ is defined as
\begin{equation*}
T_{\mathbf{\Theta}_m}(\psi)\coloneq\frac{1}{m} \sq \bigsquare_{i=1}^m(\psi\circ\vartheta_i^{-1}).
\end{equation*}

\begin{lemma}\label{epi-symm-convergence}
    Let $n\geq 2$. For every $\psi\in\Convc$, there exists a sequence of rotation epi-means of $\psi$ which epi-converges to $\psi_{\rot}\in\Convc$.
\end{lemma}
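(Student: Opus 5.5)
By Lemma~\ref{le:conj_continuous}, it suffices to find rotations $\mathbf{\Theta}_m$ with $\Leg T_{\mathbf{\Theta}_m}(\psi)$ epi-converging to $\Leg\psi_\rot$. By Lemma~\ref{le:infconv_conj}, together with $\Leg(\lambda\sq\varphi)=\lambda\Leg\varphi$ and $\Leg(\psi\circ\vartheta^{-1})=\Leg\psi\circ\vartheta^{-1}$, we have
\[
\Leg T_{\mathbf{\Theta}_m}(\psi)(x)=\frac1m\sum_{i=1}^m \Leg\psi(\vartheta_i^{-1}x).
\]
This is a Riemann-type sum for the Haar integral in \eqref{eq:rot_mean_conj}. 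So the task is to choose the rotations so that these empirical averages epi-converge to $\varphi\coloneq\Leg\psi_\rot$.

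The function $\varphi$ is lower semicontinuous and convex, and $\dom(\varphi)$ has nonempty interior because it contains $\interior(r(\psi)B_n)$. Hence Lemma~\ref{le:epi_conv_pointwise} applies, and it suffices to prove pointwise convergence on a dense set. The natural dense set is a countable dense subset $D$ of $\Rn\setminus \partial\dom(\varphi)$.

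The set $\dom(\varphi)$ is rotation invariant and convex, so it is a centered ball, possibly open or closed, or all of $\Rn$. Since $r(\psi)B_n\subseteq\dom(\Leg\psi)$ and $\dom(\varphi)\subseteq\bigcap_\vartheta\vartheta\,\dom(\Leg\psi)$ up to null sets, I expect $\interior\dom(\varphi)=\interior(r(\psi)B_n)$.

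For points $x\in D$ in the interior, the function $\vartheta\mapsto\Leg\psi(\vartheta^{-1}x)$ need not be bounded: it may blow up near the boundary of $\dom(\Leg\psi)$ for some rotations. It is, however, bounded below and Haar integrable. To handle this, I choose the rotations randomly. Take $\vartheta_1,\vartheta_2,\dots$ i.i.d.\ Haar distributed and set $\mathbf{\Theta}_m=\{\vartheta_1,\dots,\vartheta_m\}$. By the strong law of large numbers, for each fixed $x\in D$ the averages converge almost surely to $\varphi(x)$. Since $D$ is countable, the convergence holds almost surely simultaneously for all $x\in D$, and we fix one such realization.

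The points of $D$ outside $\overline{\dom(\varphi)}$ need separate care. There $\varphi(x)=\infty$ and $\Leg\psi(\vartheta^{-1}x)=\infty$ for a set of $\vartheta$ of positive measure. If that set has positive measure, almost surely some $\vartheta_i$ eventually lands in it, so the averages are eventually $\infty$. If the integral is infinite only through a non-integrable finite part, the strong law still gives divergence to $\infty$ for nonnegative variables after shifting by a lower bound on a ball.

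The main obstacle is justifying the law of large numbers with values in $(-\infty,\infty]$ and measurability in this setting. I will use the version for variables bounded below: truncate by $\min(\cdot,k)$ and let $k\to\infty$. This gives $\liminf$ of the averages $\ge\int\min(\cdot,k)$ for every $k$, hence convergence to $\infty$ when the integral is infinite.

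A fully deterministic alternative would partition $\SOn$ into small cells and approximate the Haar measure by equal-weight atoms. That requires uniform control of $\Leg\psi(\vartheta^{-1}x)$ in $\vartheta$, which fails near the domain boundary, so the probabilistic route is preferable. Once convergence on the dense set $D$ is established, Lemma~\ref{le:epi_conv_pointwise} gives epi-convergence of the conjugates, and Lemma~\ref{le:conj_continuous} gives $T_{\mathbf{\Theta}_m}(\psi)\to\psi_\rot$ in the epi sense.
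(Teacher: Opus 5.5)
Your argument is correct, but it takes a genuinely different route from the paper.

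\emph{The paper's route.} The paper chooses the rotations deterministically. It first locates the domain: it takes a direction $u_o\in\Sp$ with $\dom(\Leg\psi)\subseteq\{x:\langle x,u_o\rangle\le r(\psi)\}$, and open neighborhoods $U(r)$ of $u_o$ on which $ru\notin\dom(\Leg\psi)$. From this it derives $\interior(r(\psi)B_n)\subseteq\dom(\Leg\psi_\rot)\subseteq r(\psi)B_n$. It then picks $\mathbf{\Theta}_m$ whose averages approximate the Haar integral and which become uniformly dense in $\SOn$. For $|x|<r(\psi)$ the sums converge by continuity. For $|x|>r(\psi)$, some $\vartheta\in\mathbf{\Theta}_m$ eventually sends $x$ outside $\dom(\Leg\psi)$, so the average is $\infty$. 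Convergence on the dense set $\{|x|\neq r(\psi)\}$ then suffices.

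\emph{Your route.} You sample i.i.d.\ Haar rotations and use the strong law for variables bounded below. This covers the case $P(X=\infty)>0$ and, via truncation, the case of an infinite mean. It gives a.s.\ convergence to $\varphi(x)\in(-\infty,\infty]$ at every fixed $x$, with no information about $\dom(\Leg\psi_\rot)$ beyond its nonempty interior. So your case distinction is not really needed, and your unproved expectation $\interior\dom(\varphi)=\interior(r(\psi)B_n)$ is never used.

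\emph{What each buys.} Your approach is shorter and shows that almost every Haar sequence works. The paper's approach gives an explicit deterministic sufficient condition on the rotations. It also yields the identification of $\dom(\Leg\psi_\rot)$ up to its boundary, which the paper relies on later: $r(\psi_\rot)=r(\psi)$ in Corollary~\ref{cor:covering_functions}, and $\interior(\dom(\Leg\psi_\rot))=\interior(r(\psi)B_n)$ in Theorem~\ref{thm:outer_log_urysohn}.

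\emph{A correction to your motivation.} Your stated reason for rejecting the deterministic route is mistaken. For $|x|<r(\psi)$, the orbit $\{\vartheta^{-1}x:\vartheta\in\SOn\}$ is the sphere of radius $|x|$, which is a compact subset of $\interior(\dom(\Leg\psi))$. Hence $\vartheta\mapsto\Leg\psi(\vartheta^{-1}x)$ is continuous and bounded on $\SOn$. Since these are exactly the interior points of $\dom(\Leg\psi_\rot)$, no blow-up occurs there.
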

\begin{proof}
Fix $\psi\in \Convc$ and assume first that $r(\psi)<\infty$. By Lemma~\ref{le:conj_coercive} and \eqref{eq:def_r_psi}, there exists $u_o\in\Sp$ such that $ru_o \in \dom(\Leg \psi)$ for every $r\in[0,r(\psi))$ and $ru_o \notin \dom(\Leg \psi)$ for every $r\in (r(\psi),\infty)$. By convexity, this implies that
\[
\dom(\Leg \psi) \subseteq \{x\in\Rn :\,\langle x,u_o\rangle \leq r(\psi)\}.
\]
Thus, for every $r\in(r(\psi),\infty)$ there exists an open neighborhood $U(r)\subseteq \Sp$ of $u_o$ such that
\begin{equation}
\label{eq:ru_notin_dom}
r u \notin \dom(\Leg \psi)
\end{equation}
for every $u\in U(r)$ and every $r\in (r(\psi),\infty)$. It now follows from \eqref{eq:rot_mean_conj} that the inclusion
$$\interior (r(\psi) B_n) \subseteq \dom (\Leg \psi_{\rot})\subseteq r(\psi) B_n$$
holds true.

Next, observe that for every $m\in\N$ and every set of rotations $\mathbf{\Theta}_m=\{\vartheta_1,\ldots,\vartheta_m\}\subseteq\SOn$, $m\in\N$, we have
\begin{equation}
\label{eq:rot_mean_seq_conj}
\Leg(T_{\mathbf{\Theta}_m}(\psi))(x)=\frac{1}{m} \sum_{i=1}^m \Leg \psi(\vartheta_i^{-1} x)
\end{equation}
for $x\in\Rn$. We now want $\mathbf{\Theta}_m\subseteq\SOn$, $m\in\N$, to be a sequence such that \eqref{eq:rot_mean_seq_conj} approximates the integral \eqref{eq:rot_mean_conj} as $m\to\infty$. The existence of such a sequence can be seen from the representation of the Haar measure on $\SOn$ using hyperspherical coordinates (see, for example, \cite[Section 1.2]{Meckes-2019}). Since $\SOn$ is compact and acts transitively on $\Sp$, we may in fact choose such a sequence so that for every $\varepsilon >0$, there exists $m_o(\varepsilon)\in\N$ such that for every $u\in\Sp$ and $m\geq m_o(\varepsilon)$, there exists $\vartheta\in \mathbf{\Theta}_m$ with
\begin{equation}
\label{eq:close_to_u_o}
|u_o-\vartheta^{-1}u|<\varepsilon.
\end{equation}
Since convex functions are continuous on the interior of their domains (see, e.g., \cite[Theorem 25.5]{RockafellarBook}), it follows from \eqref{eq:def_r_psi} that $\Leg(T_{\mathbf{\Theta}_m}(\psi))$ converges pointwise to $\Leg \psi_{\rot}$ on $\interior( r(\psi) B_n)$ as $m\to\infty$. Furthermore, if $x\in\Rn$ is such that $|x|>r(\psi)$, then by \eqref{eq:close_to_u_o} we can find $m_o\in\N$ such that for every $m\geq m_o$, there exists $\vartheta\in \mathbf{\Theta}_m$ so that $\vartheta^{-1}x/|x|$ lies in the neighborhood $U(|x|)$ of $u_o$. By \eqref{eq:ru_notin_dom} this implies $\Leg(T_{\mathbf{\Theta}_m}(\psi))(x)=\infty$ for every $m\geq m_o$. We conclude that $\Leg(T_{\mathbf{\Theta}_m}(\psi))$ converges pointwise to $\psi_{\rot}$ on
\[
\{x\in\Rn :\, |x|\neq r(\psi)\},
\]
which is a dense subset of $\Rn$. Thus, it follows from Lemma~\ref{le:epi_conv_pointwise} that $\Leg(T_{\mathbf{\Theta}_m}(\psi))$ epi-converges to $\Leg \psi_{\rot}$, which, by Lemma~\ref{le:conj_continuous} is equivalent to the epi-convergence of $T_{\mathbf{\Theta}_m}(\psi)$ to $\psi_{\rot}$ as $m\to\infty$.

In the case $r(\psi)=\infty$, we have $\dom (\psi)=\dom (\psi_{\rot})=\Rn$. We can therefore proceed using a simplified version of the argument above and only need to choose $\mathbf{\Theta}_m\subseteq \SOn$, $m\in\N$, such that \eqref{eq:rot_mean_seq_conj} approximates \eqref{eq:rot_mean_conj}.
\end{proof}


\section{Existence of generalized outer linearizations}\label{sec:existence}
In this section, we analyze conditions for the existence of generalized outer linearizations, as defined in \eqref{eq:def_outer_lin}. More precisely, in Proposition~\ref{prop:C_intersection} we establish exact conditions on $Y\subseteq \Rn$ so that $q_{\psi,Y}$ is an element of $\Convc$. The succeeding Lemma~\ref{le:existence_consequences} then deals with conditions so that all of the quantities that (implicitly) appear in the statement of Theorem~\ref{mainThmA} are well-defined. 

\medskip

For a family of lower semicontinuous, convex functions $\psi_i\colon \Rn\to (-\infty,\infty]$, $i\in I$, we denote by
\[
\hat{\inf}_{i\in I}\{\psi_i\} = \sup \{\psi\colon \Rn \to \R :\, \psi \text{ is l.s.c.\ and convex and } \psi\leq \psi_i,\; \forall i\in I\}
\]
its \emph{regularized infimum}, which is the largest convex function that is pointwise bounded from above by the functions of the family. The following result is a consequence of \cite[Lemma 2]{Artstein-Milman2009}.
\begin{lemma}
\label{le:conj_wedge}
If $\psi_i\colon \Rn\to (-\infty,\infty]$, $i\in I$, is a family of lower semicontinuous, convex functions, then
$$\Leg\left(\hat{\inf}_{i\in I} \{\psi_i\}\right) = \sup\nolimits_{i\in I}\left\{\Leg \psi_i\right\}.$$
\end{lemma}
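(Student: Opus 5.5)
The identity follows from Legendre--Fenchel duality applied to epigraphs. The idea is to identify the regularized infimum as the biconjugate of the pointwise infimum, and then to compute the conjugate of a pointwise infimum directly.

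\emph{Step 1: the conjugate of the pointwise infimum.} Set $g\coloneq\inf_{i\in I}\psi_i$, taken pointwise. For every $y\in\Rn$, exchanging the two suprema gives
\[
\Leg g(y)=\sup\nolimits_{x}\sup\nolimits_{i}\big(\langle x,y\rangle-\psi_i(x)\big)=\sup\nolimits_{i}\Leg\psi_i(y).
\]
This uses no convexity at all. Write $h\coloneq\sup_i\Leg\psi_i$.

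\emph{Step 2: $\hat{\inf}_{i\in I}\{\psi_i\}=\Leg\Leg g$.} The functional $\Leg\Leg g$ is the supremum of all affine minorants of $g$. Any l.s.c.\ convex $\varphi\leq g$ is, by Lemma~\ref{le:conj_is_nice} (or by the trivial cases $\varphi\equiv\pm\infty$), itself the supremum of its affine minorants. Those affine minorants are also affine minorants of $g$, so $\varphi\leq\Leg\Leg g$. Conversely, $\Leg\Leg g$ is l.s.c., convex and $\leq g\leq\psi_i$ for every $i$, so it is a competitor in the supremum defining the regularized infimum. Hence the two coincide.

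\emph{Step 3: conclude.} By Steps 1 and 2,
\[
\Leg\big(\hat{\inf}_{i\in I}\{\psi_i\}\big)=\Leg\Leg\Leg g=\Leg g=h,
\]
using the standard identity $\Leg\Leg\Leg=\Leg$. This identity holds for arbitrary functions, since $\Leg\Leg g\leq g$ gives $\Leg\Leg\Leg g\geq\Leg g$, while applying $\Leg\Leg f\leq f$ to $f=\Leg g$ gives the reverse inequality.

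The main obstacle is Step 2, specifically the degenerate cases. These are when $g$ has no affine minorant, so that $\Leg g\equiv\infty$ and the regularized infimum is $-\infty$, and when some $\psi_i$ are improper. In these cases the convention for the supremum in the definition of $\hat{\inf}$, which is over possibly $(-\infty,\infty]$-valued functions, must be read consistently with $\Leg(-\infty)\equiv\infty$. I would treat them separately, checking directly that both sides equal $+\infty$ identically. In all other cases, the l.s.c.\ convex minorants are proper and Lemma~\ref{le:conj_is_nice} applies verbatim.
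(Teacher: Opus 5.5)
Your argument is correct. It is a genuinely different route only in the sense that the paper gives no argument at all: it simply cites \cite[Lemma 2]{Artstein-Milman2009}. Your Steps 1--3 are the standard proof behind that citation. The conjugate of the pointwise infimum $g$ is $\sup_i\Leg\psi_i$, and conjugation does not see the passage from $g$ to its closed convex hull $\Leg\Leg g$. What your version buys is self-containedness. The price is that you must fix conventions for extended values, and that is where your write-up slips slightly.

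\emph{Improper $\psi_i$.} Your degenerate-case paragraph is off here. Since the $\psi_i$ take values in $(-\infty,\infty]$, an improper l.s.c.\ convex $\psi_i$ is $\equiv+\infty$. Then $\Leg\psi_i\equiv-\infty$, and that index simply drops out of both sides. If all $\psi_i$ are improper (or $I=\emptyset$), both sides are $\equiv-\infty$, not $+\infty$.

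\emph{No case analysis is needed.} Steps 1 and 3 hold verbatim for arbitrary extended-real-valued functions. Step 2 holds with $\sup\emptyset=-\infty$: if $g\not\equiv+\infty$ has no affine minorant, then there is no $(-\infty,\infty]$-valued l.s.c.\ convex minorant of $g$ at all. Indeed, any such minorant would be proper, hence possess an affine minorant, which would also be an affine minorant of $g$.

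\emph{Real-valued competitors.} The paper's definition of $\hat{\inf}$, read literally, takes the supremum over real-valued competitors $\psi\colon\Rn\to\R$. Under that reading $\Leg\Leg g$, which typically takes the value $+\infty$, is not itself a competitor, so your converse inequality in Step 2 needs one extra sentence. Every affine minorant of $g$ is a competitor, and $\Leg\Leg g$ is the supremum of these, so $\hat{\inf}_{i\in I}\{\psi_i\}\geq\Leg\Leg g$ follows just as well.
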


Recall that for $\psi\in\Convc$ and $y\in\Rn$, we define $\ell_{\psi,y}\colon\Rn\to [-\infty,\infty)$ as the largest affine function with gradient $y$ that is bounded from above by $\psi$. Furthermore, for $Y\subseteq \Rn$, we consider the pointwise supremum $q_{\psi,Y}\coloneq\sup_{y\in Y} \ell_{\psi,y}$. In addition, we will denote by $\conv(A)$ the \textit{convex hull} of $A\subseteq \R^k$. The next result can essentially also be found in \cite[Section 2]{Bertsekas2011}, where it was shown in a slightly more restrictive setting; see also \cite[Section 4.3]{Bertsekas2015}. 

\begin{lemma}
\label{le:vector_dom}
If $\psi\in\Convc$ and $y\in\Rn$, then
\[
\ell_{\psi,y}(x)=\langle x,y\rangle -\Leg \psi(y)
\]
for every $x\in\Rn$. In particular, the function $\ell_{\psi,y}$ is proper if and only if
\begin{equation}
\label{eq:y_dom_leg_psi}
y\in\dom(\Leg \psi).
\end{equation}
Moreover, if $Y\subseteq \Rn$ is such that $Y\cap \dom(\Leg \psi)\neq \emptyset$, then
\[
\epi(\Leg q_{\psi,Y}) = \operatorname{cl}\big(\conv\{(y,t)\in\Rn\times\R:\, y\in Y \text{ s.t. } y\in \dom(\Leg \psi), \Leg \psi(y)\leq t\}\big).
\]
\end{lemma}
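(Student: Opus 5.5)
The proof has three parts, matching the three claims of Lemma~\ref{le:vector_dom}.

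\emph{Formula for $\ell_{\psi,y}$.} By definition, $\ell_{\psi,y}(x)=\langle x,y\rangle+c$, where $c$ is the largest constant with $\langle x,y\rangle+c\leq\psi(x)$ for all $x\in\Rn$. This condition is equivalent to
\[
-c\geq \sup_{x\in\Rn}\big(\langle x,y\rangle-\psi(x)\big)=\Leg\psi(y).
\]
Hence the maximal admissible value is $c=-\Leg\psi(y)$. If $\Leg\psi(y)=\infty$, no real $c$ works, so $c=-\infty$, which is consistent with the convention in the definition. The value $\Leg\psi(y)=-\infty$ cannot occur, because $\psi$ is proper (see Lemma~\ref{le:conj_is_nice}). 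The first formula follows. Since $\ell_{\psi,y}\equiv-\infty$ exactly when $\Leg\psi(y)=\infty$, and is real-valued affine otherwise, $\ell_{\psi,y}$ is proper if and only if \eqref{eq:y_dom_leg_psi} holds.

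\emph{Conjugate of $q_{\psi,Y}$.} Let $Y'\coloneq Y\cap\dom(\Leg\psi)$, which is nonempty by assumption. The functions $\ell_{\psi,y}$ with $y\notin\dom(\Leg\psi)$ are identically $-\infty$, so they do not affect the supremum, and
\[
q_{\psi,Y}=\sup_{y\in Y'}\ell_{\psi,y}.
\]
This is a supremum of real affine functions, hence it is convex and l.s.c.; it is also proper because $q_{\psi,Y}\leq\psi$. Moreover, $q_{\psi,Y}>-\infty$ everywhere since $Y'\neq\emptyset$. By \eqref{eq:conj_linear_fct} and the effect of adding a constant, each $\ell_{\psi,y}$ with $y\in Y'$ satisfies
\[
\Leg\ell_{\psi,y}=\ind_{\{y\}}+\Leg\psi(y)\eqqcolon g_y.
\]
Its epigraph is the vertical ray $\{y\}\times[\Leg\psi(y),\infty)$. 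Each $g_y$ is proper, l.s.c.\ and convex, and $\Leg g_y=\ell_{\psi,y}$ by Lemma~\ref{le:conj_is_nice}. Applying Lemma~\ref{le:conj_wedge} gives
\[
\Leg\big(\hat{\inf}_{y\in Y'} g_y\big)=\sup_{y\in Y'}\ell_{\psi,y}=q_{\psi,Y}.
\]
Taking conjugates once more (Lemma~\ref{le:conj_is_nice}, applicable once we know $\hat{\inf}_{y\in Y'} g_y$ is proper, l.s.c.\ and convex) yields
\[
\Leg q_{\psi,Y}=\hat{\inf}_{y\in Y'}g_y.
\]

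\emph{Epigraph of the regularized infimum.} It remains to identify the epigraph of the regularized infimum. Its epigraph is the smallest closed convex set containing every $\epi(g_y)$, so it equals
\[
\operatorname{cl}\conv\bigcup_{y\in Y'}\big(\{y\}\times[\Leg\psi(y),\infty)\big).
\]
This union is exactly $\{(y,t):y\in Y',\ \Leg\psi(y)\leq t\}$, which is the claimed set.

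\emph{Main obstacle.} The delicate point is justifying this last step, namely that $\hat{\inf}$ corresponds to the closed convex hull of the union of epigraphs. Two things need checking:
\begin{enumerate}[(i)]
\item The closed convex hull $C$ of the union is a genuine epigraph, i.e.\ it contains no full vertical line. Otherwise the regularized infimum would take the value $-\infty$.
\item The function whose epigraph is $C$ is the largest l.s.c.\ convex minorant of all the $g_y$.
\end{enumerate}
For (i), I would use that $q_{\psi,Y}$ is finite at some point $x_0$ (for instance any point of $\dom\psi$). Then every $(y,t)$ in the union satisfies $t\geq\langle x_0,y\rangle-q_{\psi,Y}(x_0)$. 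This affine lower bound passes to convex combinations and to limits, so $C$ lies above a nonvertical hyperplane and contains no vertical line. For (ii), the function defined by $C$ is l.s.c.\ and convex and lies below each $g_y$, since $C\supseteq\epi(g_y)$. Conversely, any l.s.c.\ convex minorant $h$ of all $g_y$ has a closed convex epigraph containing every $\epi(g_y)$, hence containing $C$, so $h$ lies below the function defined by $C$. Closedness of $C$ under vertical translation upward is clear, so $C$ is indeed an epigraph. Together these give the stated formula.
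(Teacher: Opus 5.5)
Your proposal is correct and follows essentially the same route as the paper. You identify $\ell_{\psi,y}$ through the conjugate, write $q_{\psi,Y}$ as the conjugate of the regularized infimum of the functions $\ind_{\{y\}}+\Leg\psi(y)$ via Lemma~\ref{le:conj_wedge}, and biconjugate to read off the epigraph as a closed convex hull. Your check that this hull lies above a nonvertical hyperplane, so that the regularized infimum is proper and Lemma~\ref{le:conj_is_nice} applies, supplies a detail the paper's proof leaves implicit.
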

\begin{proof}
By definition,
\[
\ell_{\psi,y}(x)=\langle x,y\rangle +c
\]
for $x\in\Rn$, where $c\in [-\infty,\infty)$ is chosen maximal so that $\ell_{\psi,y}\leq \psi$ pointwise. By the properties of convex conjugation, in particular \eqref{eq:conj_linear_fct}, we have
\[
\Leg \ell_{\psi,y} = \ind_{\{y\}} - c,
\]
where $c$ is maximal such that this function is pointwise greater than or equal to $\Leg\psi$. This shows that $c\neq -\infty$ (or equivalently, $c\in\R$) if and only if \eqref{eq:y_dom_leg_psi} is satisfied, in which case $c=-\Leg\psi(y)$.

Now let $Y\subseteq \Rn$ be nonempty and assume without loss of generality that all functions $\ell_{\psi,y}$ are proper (since $q_{\psi,Y}$ is defined as a supremum), or, equivalently, $Y\subseteq \dom(\Leg \psi)$. By the first part of the proof together with Lemma~\ref{le:conj_wedge}, we have
\[
q_{\psi,Y} = \Leg \left(\hat{\inf}_{y\in Y} \{\Leg \ell_{\psi,y}\} \right) = \Leg \left(\hat{\inf}_{y\in y} \left\{\ind_{\{y\}} + \Leg\psi(y)\right\} \right).
\]
The statement now follows if we apply the Legendre--Fenchel transform to the equation above and consider the fact that for any function $\varphi\colon \Rn \to (-\infty,\infty]$, the function $\Leg \Leg \varphi$ is the largest lower semicontinuous convex function that is pointwise dominated by $\varphi$ (i.e., $\Leg \Leg \varphi$ is the closed convex hull of $\varphi$).
\end{proof}

We point out the following simple consequence of Lemma~\ref{le:vector_dom}.

\begin{lemma}
\label{le:q_psi_rn_psi}
If $\psi\in\Convc$, then $q_{\psi,\Rn}=\psi$.
\end{lemma}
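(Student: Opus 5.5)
The plan is to read the identity off Lemma~\ref{le:vector_dom}, using Fenchel--Moreau (Lemma~\ref{le:conj_is_nice}) at the end. The first part of Lemma~\ref{le:vector_dom} says that for $y\in\dom(\Leg\psi)$ we have $\ell_{\psi,y}(x)=\langle x,y\rangle-\Leg\psi(y)$. For $y\notin\dom(\Leg\psi)$ we have $\ell_{\psi,y}\equiv-\infty$, so these slopes do not affect the supremum. Hence, for every $x\in\Rn$,
\[
q_{\psi,\Rn}(x)=\sup_{y\in\dom(\Leg\psi)}\big(\langle x,y\rangle-\Leg\psi(y)\big)=\sup_{y\in\Rn}\big(\langle x,y\rangle-\Leg\psi(y)\big)=\Leg\Leg\psi(x).
\]
The second equality holds because the omitted terms are $-\infty$. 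The domain $\dom(\Leg\psi)$ is nonempty, since $\Leg\psi$ is proper by Lemma~\ref{le:conj_is_nice}; indeed $o\in\interior(\dom(\Leg\psi))$ by Lemma~\ref{le:conj_coercive}. So the supremum is not taken over an empty set.

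Since $\psi\in\Convc$ is proper, lower semicontinuous and convex, Lemma~\ref{le:conj_is_nice} gives $\Leg\Leg\psi=\psi$, and therefore $q_{\psi,\Rn}=\psi$.

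As a cross-check, one can instead use the epigraph description in Lemma~\ref{le:vector_dom} with $Y=\Rn$. There the set inside the closed convex hull is exactly $\epi(\Leg\psi)$, which is already closed and convex. This gives $\Leg q_{\psi,\Rn}=\Leg\psi$, and conjugating again yields the claim, using that $q_{\psi,\Rn}$ is a supremum of affine functions and hence lower semicontinuous and convex.

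There is no real obstacle. The only point needing care is handling the slopes $y$ with $\ell_{\psi,y}\equiv-\infty$, which the formula $\langle x,y\rangle-\Leg\psi(y)=-\infty$ already covers consistently.
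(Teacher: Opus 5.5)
Your proposal is correct and is essentially the paper's proof: it rewrites $q_{\psi,\Rn}(x)=\sup_{y\in\Rn}(\langle x,y\rangle-\Leg\psi(y))$ via Lemma~\ref{le:vector_dom} and concludes with $\Leg\Leg\psi=\psi$ from Lemma~\ref{le:conj_is_nice}. Your extra remarks (slopes outside $\dom(\Leg\psi)$ contribute $-\infty$, and the epigraph cross-check) are valid but not needed.
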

\begin{proof}
Since $\psi\in\Convc$, it follows from Lemma~\ref{le:conj_is_nice} that $\Leg\Leg\psi=\psi$. Thus, by the definition of $q_{\psi,\Rn}$ in \eqref{eq:def_outer_lin} and Lemma~\ref{le:vector_dom} we have
\[
q_{\psi,\Rn}(x)=\sup\nolimits_{y\in\Rn}\left(\langle x,y\rangle-\Leg\psi(y)\right)=\Leg\Leg\psi(x)=\psi(x)
\]
for every $x\in\Rn$.
\end{proof}

The following result is a monotonicity property of the generalized outer linearization. 
\begin{lemma}\label{q-is-monotone-wrt-inclusion}
    If $Y_1,Y_2\subseteq\Rn$ are such that $Y_1\subseteq Y_2$, then $q_{\psi,Y_1}\leq q_{\psi,Y_2}$ for every $\psi\in\Convc$.
\end{lemma}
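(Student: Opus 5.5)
This follows directly from the definition \eqref{eq:def_outer_lin} of the generalized outer linearization as a pointwise supremum. Fix $\psi\in\Convc$ and $x\in\Rn$. For every $y\in Y_1$ we have $y\in Y_2$, hence
\[
\ell_{\psi,y}(x)\leq \sup\nolimits_{z\in Y_2}\ell_{\psi,z}(x)=q_{\psi,Y_2}(x).
\]
Taking the supremum over $y\in Y_1$ on the left-hand side gives $q_{\psi,Y_1}(x)\leq q_{\psi,Y_2}(x)$. Since $x$ was arbitrary, $q_{\psi,Y_1}\leq q_{\psi,Y_2}$ pointwise.

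The only point to address is the value $-\infty$. By Lemma~\ref{le:vector_dom}, the functions $\ell_{\psi,y}(x)=\langle x,y\rangle-\Leg\psi(y)$ may be identically $-\infty$ when $y\notin\dom(\Leg\psi)$. Such slopes contribute nothing to the supremum, and the inequality above remains valid in the extended reals $[-\infty,\infty]$.

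If $Y_1=\emptyset$ is allowed, then under the convention $\sup\emptyset=-\infty$ the inequality holds trivially. Otherwise both sets are nonempty, as the definition requires.

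There is no genuine obstacle here. The statement is the monotonicity of a supremum with respect to the index set, and neither the duality from Lemma~\ref{le:vector_dom} nor any coercivity assumption is needed beyond the fact that each $\ell_{\psi,y}$ is well-defined with values in $[-\infty,\infty)$.
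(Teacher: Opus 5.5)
Your proof is correct and is essentially the paper's own argument: the paper also notes that the set of values $\langle x,y\rangle-\Leg\psi(y)$ indexed by $Y_1$ is contained in the set indexed by $Y_2$, so the supremum can only increase. Your extra remarks on the value $-\infty$ and on an empty index set are harmless and consistent with the paper's conventions.
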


\begin{proof}
    Since $Y_1\subseteq Y_2$, for every $x\in\Rn$ we have 
    \[
\{\langle x,y\rangle -\Leg\psi(y):\,y\in Y_1\}\subseteq\{\langle x,y\rangle -\Leg\psi(y):\,y\in Y_2\}.
    \]
    Thus, by definition,
    \[
q_{\psi,Y_1}(x) = \sup\{\langle x,y\rangle -\Leg\psi(y):\,y\in Y_1\} \leq \sup\{\langle x,y\rangle -\Leg\psi(y):\,y\in Y_2\}=q_{\psi,Y_2}(x).
    \]
\end{proof}

\begin{remark}
\label{rem:outer_lin_subdiff}
It is a consequence of \cite[Proposition 11.3]{Rockafellar-Wets} that condition \eqref{eq:y_dom_leg_psi} is met for every $y\in\Rn$ such that $y\in \partial \psi(\Rn)$.
Conversely, \cite[Lemma 1.6.16]{SchneiderBook} shows that if $y$ is contained in the relative interior of $\dom(\Leg \psi)$, then $y\in\partial \psi(\Rn)$. If, however, $y$ is a boundary point of $\dom(\Leg \psi)$, then $y$ may or may not be an element of $\partial \psi(\Rn)$. An example of the latter is given by $\psi(x)\coloneq\sqrt{1+|x|^2}$, which satisfies
\[
\partial \psi(\Rn)=\{x\in\Rn :\, |x|<1\}.
\]
It is straightforward to check that $\Leg\psi(x)=-\sqrt{1-|x|^2}+\ind_{B_n}(x)$, and, in particular,\linebreak$\dom(\Leg \psi)=B_n$. Now if $y\in\Rn$ is such that $|y|=1$, then $y\notin \partial \psi(\Rn)$ but $\ell_{\psi,y}$ is proper. Thus, in order to determine whether $\ell_{\psi,y}$ is proper, it is not sufficient to verify whether $y\in\partial \psi(\Rn)$.

Let us also mention that the example above suggests that it may be sufficient to verify whether $y$ is in the closure of $\partial \psi(\Rn)$. However, this is also not the case, as the example
\[
\varphi(x)\coloneq\begin{cases}0 \qquad &\text{if } |x|\leq 1\\ |x|-\sqrt{|x|}\qquad &\text{if } 1<|x| \end{cases}
\]
shows. Again, $\partial \varphi(\Rn)=\{x\in \Rn :\,|x|<1\}$ but, in contrast to the example above, also $\dom(\Leg \varphi)=\{x\in \Rn : |x|<1\}$.\dssymb
\end{remark}

The following is obtained from \cite[Theorem 1.1.14]{SchneiderBook}; see also \cite[Section 1.3, Note 4]{SchneiderBook}.

\begin{lemma}
\label{le:pos_conv_origin}
If $B\subseteq \Rn$, then $\pos(B) = \Rn$ if and only if $\conv(B)$ contains the origin in its interior.
\end{lemma}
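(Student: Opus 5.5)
We prove both implications directly, using the positive hull and hyperplane separation. Here $\pos(B)$ denotes the set of all finite nonnegative linear combinations of elements of $B$, so in particular $o\in\pos(B)$.

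\emph{Sufficiency.} Assume that $\conv(B)$ contains a ball $\varepsilon B_n$ around $o$ for some $\varepsilon>0$. Let $x\in\Rn\setminus\{o\}$ be arbitrary. The point $\varepsilon x/|x|$ lies in $\varepsilon B_n\subseteq\conv(B)$, so it is a convex combination of finitely many points of $B$. Multiplying this convex combination by the positive scalar $|x|/\varepsilon$ writes $x$ as a nonnegative combination of the same points, hence $x\in\pos(B)$. Since also $o\in\pos(B)$, we conclude $\pos(B)=\Rn$.

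\emph{Necessity.} Assume $\pos(B)=\Rn$ and suppose, for contradiction, that $o\notin\interior(\conv(B))$. The set $C=\conv(B)$ is convex, and the origin lies either outside $C$ or on its boundary. In either case, the separation/supporting hyperplane theorem for convex sets in $\Rn$ (applied to $C$ and the point $o$, or to the closure of $C$ if one prefers) provides a vector $u\neq o$ such that $\langle b,u\rangle\le 0$ for all $b\in C$, and in particular for all $b\in B$. This inequality is preserved under nonnegative combinations, so $\langle x,u\rangle\le 0$ for every $x\in\pos(B)=\Rn$. Taking $x=u$ gives $|u|^2\le 0$, a contradiction. Hence $o\in\interior(\conv(B))$.

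The only nontrivial ingredient is the existence of a separating hyperplane when $o$ is not an interior point of $C$. For a convex set in finite dimensions, a point outside the interior is either outside the closure of $C$ (strict separation) or on the boundary of the closure of $C$ (a supporting hyperplane), because the interior of a convex set equals the interior of its closure. This is standard; see \cite[Theorem 1.1.14]{SchneiderBook}.
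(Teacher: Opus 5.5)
Your proof is correct, and it takes a different route from the paper only in that the paper gives no argument here: it takes the lemma from \cite[Theorem 1.1.14]{SchneiderBook} and \cite[Section 1.3, Note 4]{SchneiderBook}, whereas you prove it directly.

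In your proof, the ``if'' direction is a rescaling of a single convex combination. The ``only if'' direction rests on the dual observation that a nonzero $u$ with $\langle b,u\rangle\le 0$ on $B$ is incompatible with $\pos(B)=\Rn$. Such a $u$ exists whenever $o\notin\interior(\conv(B))$, by strict separation or by a supporting hyperplane at $o$.

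The citation is economical and fits the paper's later use of Steinitz's theorem from the same source (Lemma~\ref{le:pos_basis}). Your argument shows that the finite-dimensional supporting hyperplane theorem is the only real input. It also correctly handles the fact that $\conv(B)$ need not be closed when $B$ is infinite, via $\interior(\operatorname{cl}C)=\interior(C)$. This identity holds even when $\interior(C)=\emptyset$, since then $C$ and its closure lie in a common affine hyperplane. The infinite case matters here, because the paper applies the lemma to arbitrary, possibly infinite, slope sets in Proposition~\ref{prop:C_intersection}.

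One cosmetic point: you attribute the separation step to \cite[Theorem 1.1.14]{SchneiderBook}, which is the reference the paper uses for the lemma itself. The separation and supporting hyperplane theorems you actually need are in the support-and-separation section of that book (Section 1.3).
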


We can now state and prove the main result of this section.

\begin{proposition}
\label{prop:C_intersection}
Let $\psi\in\Convc$ and let $Y\subseteq \Rn$ be nonempty. The function $q_{\psi,Y}$ is an element of $\Convc$ if and only if there exists $Y_o\subseteq Y$ such that
\begin{equation}
\label{eq:cond_Y_o}
Y_o \subseteq \dom(\Leg \psi) \quad \text{and} \quad \pos(Y_o)=\Rn.
\end{equation}
\end{proposition}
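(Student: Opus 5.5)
The plan is to translate membership $q_{\psi,Y}\in\Convc$ into a statement about the conjugate $\Leg q_{\psi,Y}$, using Lemma~\ref{le:conj_coercive} together with the epigraph description of Lemma~\ref{le:vector_dom}, and then to apply Lemma~\ref{le:pos_conv_origin}.

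Set $Y' \coloneq Y\cap\dom(\Leg\psi)$. By Lemma~\ref{le:vector_dom}, every $\ell_{\psi,y}$ with $y\notin\dom(\Leg\psi)$ is identically $-\infty$. These terms therefore do not contribute to the supremum, so $q_{\psi,Y}=q_{\psi,Y'}$ whenever $Y'\neq\emptyset$. If $Y'=\emptyset$, then $q_{\psi,Y}\equiv-\infty$, which is not in $\Convc$; in that case no $Y_o$ as in \eqref{eq:cond_Y_o} exists either, since such a $Y_o$ would have to be nonempty. We may therefore assume $Y'\neq\emptyset$. Then $q_{\psi,Y}$ is a supremum of real affine functions, so it is convex, lower semicontinuous and never $-\infty$. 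Since $q_{\psi,Y}\le\psi$ and $\psi$ is proper, $q_{\psi,Y}$ is also proper. Consequently, $q_{\psi,Y}\in\Convc$ if and only if $q_{\psi,Y}$ is coercive. By Lemma~\ref{le:conj_coercive}, this holds if and only if $o\in\interior(\dom(\Leg q_{\psi,Y}))$.

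The next step is to identify $\dom(\Leg q_{\psi,Y})$ from the epigraph formula in Lemma~\ref{le:vector_dom}. Projecting the set $C\coloneq\conv\{(y,t):\, y\in Y',\ t\ge\Leg\psi(y)\}$ to $\Rn$ gives exactly $\conv(Y')$. Since $\epi(\Leg q_{\psi,Y})=\operatorname{cl}(C)$, we obtain
\[
\conv(Y')\subseteq \dom(\Leg q_{\psi,Y}) \subseteq \operatorname{cl}(\conv(Y')).
\]
A convex set and its closure have the same interior, so
\[
\interior(\dom(\Leg q_{\psi,Y}))=\interior(\conv(Y')).
\]
Hence $q_{\psi,Y}\in\Convc$ if and only if $o\in\interior(\conv(Y'))$. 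By Lemma~\ref{le:pos_conv_origin}, this is equivalent to $\pos(Y')=\Rn$.

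Both directions now follow. If $q_{\psi,Y}\in\Convc$, take $Y_o\coloneq Y'$. Conversely, suppose $Y_o\subseteq Y$ satisfies \eqref{eq:cond_Y_o}. Then $Y_o\subseteq Y'$, so $\pos(Y')\supseteq\pos(Y_o)=\Rn$, and therefore $q_{\psi,Y}\in\Convc$. (Alternatively, one can use Lemma~\ref{q-is-monotone-wrt-inclusion} to reduce to $Y_o$ directly.)

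The main point requiring care is the projection step. The closure in the epigraph formula could in principle enlarge the domain, and we must know that $\dom(\Leg q_{\psi,Y})$ contains $\conv(Y')$ and not merely that its closure does. The inclusion $\conv(Y')\subseteq\dom$ holds because a point of $\conv(Y')$ is a finite convex combination of points $y_i\in Y'$. The corresponding points $(y_i,\Leg\psi(y_i))$ have finite heights, so their convex combination lies in $C$ with a finite height. The upper inclusion into $\operatorname{cl}(\conv(Y'))$ follows because the projection of $\operatorname{cl}(C)$ lies in the closure of the projection of $C$. With both inclusions, the interior argument above goes through.
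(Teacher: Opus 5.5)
Your proof is correct and follows the same route as the paper. You restrict to $Y\cap\dom(\Leg\psi)$, note lower semicontinuity, convexity and properness, and then reduce coercivity to $\pos(Y\cap\dom(\Leg\psi))=\Rn$ via Lemma~\ref{le:conj_coercive}, the epigraph formula of Lemma~\ref{le:vector_dom}, and Lemma~\ref{le:pos_conv_origin}. Your sandwich $\conv(Y')\subseteq\dom(\Leg q_{\psi,Y})\subseteq\operatorname{cl}(\conv(Y'))$, together with the equality of interiors, spells out the step the paper compresses into a single citation of these lemmas.
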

\begin{proof}
Observe first that since $q_{\psi,Y}=\sup_{y\in Y} \ell_{\psi,y}$ is the supremum of (possibly nonproper) affine functions, it is always lower semicontinuous and convex. By Lemma~\ref{le:vector_dom}, we have $q_{\psi,Y}\not\equiv -\infty$ if and only if $Y\cap \dom(\Leg \psi)$ is nonempty. In this case, the function $q_{\psi,Y}$ is also proper since $q_{\psi,Y}\leq \psi$, which trivially follows from the definition of $\ell_{\psi,Y}$. Finally, by Lemma~\ref{le:conj_coercive}, Lemma~\ref{le:vector_dom}, and Lemma~\ref{le:pos_conv_origin}, the function $q_{\psi,Y}$ is coercive if and only if some subset of $Y\cap \dom(\Leg \psi)$ positively spans $\Rn$.
\end{proof}

\begin{lemma}
\label{le:existence_consequences}
Let $\psi\in\Convc$ and let $Y\subseteq\Rn$. If there exists $Y_o\subseteq Y$ such that
\[
Y_o\subseteq \interior(r(\psi) B^n)\quad \text{and} \quad \pos(Y_o)=\Rn,
\]
then the functions
\begin{itemize}
    \item $q_{\psi,Y}$,
    \item $q_{\psi\circ \vartheta^{-1},Y}$ for every $\vartheta\in\On$,
    \item $T_{\mathbf{\Theta}_m}(\psi)$ for every $\mathbf{\Theta}_m=\{\vartheta_1,\ldots,\vartheta_m\}\subseteq\SOn$, $m\in\N$,
    \item $q_{\psi_\rot,Y}$,
\end{itemize}
are well-defined and elements of $\Convc$. In particular, there exists a finite set $Y$ that has these properties.
\end{lemma}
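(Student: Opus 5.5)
The strategy is to reduce every item to Proposition~\ref{prop:C_intersection}. For each function $\varphi\in\{\psi,\psi\circ\vartheta^{-1},\psi_\rot\}$ we check that the given $Y_o$ satisfies $Y_o\subseteq\dom(\Leg\varphi)$. This follows once we know
\[
\interior(r(\psi)B_n)\subseteq \dom(\Leg \varphi),
\]
since $\pos(Y_o)=\Rn$ is already assumed. For $\varphi=\psi$ this is just the definition \eqref{eq:def_r_psi}: $rB_n\subseteq\dom(\Leg\psi)$ for all $r<r(\psi)$, and the union of these balls is the open ball of radius $r(\psi)$ (or $\Rn$ if $r(\psi)=\infty$). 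Proposition~\ref{prop:C_intersection} then gives $q_{\psi,Y}\in\Convc$.

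For rotations, we use the identity $\Leg(\psi\circ\vartheta^{-1})=(\Leg\psi)\circ\vartheta^{-1}$ for $\vartheta\in\On$, which follows from a change of variables in the supremum because $\vartheta$ is orthogonal. It gives $\dom(\Leg(\psi\circ\vartheta^{-1}))=\vartheta\,\dom(\Leg\psi)$. This set contains $\vartheta\,\interior(r(\psi)B_n)=\interior(r(\psi)B_n)$, so again $Y_o$ lies in the domain. Moreover, $\psi\circ\vartheta^{-1}$ is plainly proper, l.s.c., convex and coercive, so it lies in $\Convc$.

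For the rotation epi-mean, we want $T_{\mathbf{\Theta}_m}(\psi)\in\Convc$. Its conjugate is given by \eqref{eq:rot_mean_seq_conj}, namely an average of the conjugates $(\Leg\psi)\circ\vartheta_i^{-1}$. This requires Lemma~\ref{le:infconv_conj}, iterated, together with the elementary rule $\Leg(\lambda\sq\phi)=\lambda\Leg\phi$. The average is a proper, l.s.c., convex function that is finite on $\interior(r(\psi)B_n)$, so $o$ is an interior point of its domain. Lemma~\ref{le:conj_is_nice} and Lemma~\ref{le:conj_coercive} then show that its conjugate lies in $\Convc$. One subtlety needs care here: the infimal convolution must be l.s.c. and proper, so that it coincides with $\Leg$ of the sum. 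We handle this by defining the epi-mean, if necessary, as $\Leg$ of the averaged conjugate, which agrees with the infimal convolution since coercivity makes the infimum attained and l.s.c.

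For $\psi_\rot$ with $n\ge2$, Lemma~\ref{le:conj_rot_epi-symm} gives $\psi_\rot\in\Convc$, and \eqref{eq:rot_mean_conj} shows that $\Leg\psi_\rot$ is finite on $\interior(r(\psi)B_n)$, as noted in that proof. For $n=1$, the remark after Lemma~\ref{le:conj_rot_epi-symm} gives $\Leg\psi_\rot(x)=\tfrac12(\Leg\psi(x)+\Leg\psi(-x))$, which is likewise finite on $(-r(\psi),r(\psi))$. Proposition~\ref{prop:C_intersection} applies in both cases.

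Finally, for the existence of a finite $Y$, we take the vertices of a regular simplex centered at $o$, scaled to have norm $r(\psi)/2$ (or norm $1$ if $r(\psi)=\infty$). The origin is interior to their convex hull, so by Lemma~\ref{le:pos_conv_origin} they positively span $\Rn$, and they lie in $\interior(r(\psi)B_n)$. In dimension $1$ the points $\pm r(\psi)/2$ serve the same role.

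The main obstacle I expect is the epi-mean item: justifying the conjugate formula for infimal convolutions of coercive (not super-coercive) functions, including lower semicontinuity and exactness. Everything else is direct bookkeeping with Proposition~\ref{prop:C_intersection}.
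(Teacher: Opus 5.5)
Your proposal is correct and follows essentially the same route as the paper. It shows that $\interior(r(\psi)B_n)$ lies in the domain of the conjugate of each function, via the rotation identity, the averaged-conjugate formula \eqref{eq:rot_mean_seq_conj}, and \eqref{eq:rot_mean_conj}; it then invokes Proposition~\ref{prop:C_intersection} and uses a simplex inside $\interior(r(\psi)B_n)$ to produce the finite $Y$. Your extra justification that the infimal convolution of coercive functions is exact and lower semicontinuous is a detail the paper takes for granted (through Lemma~\ref{le:infconv_conj} and the assertion $\psi_1\infconv\psi_2\in\Convc$ in Section~\ref{sec:main-results}), so it is not a real obstacle.
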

\begin{proof}
It follows from the definition of $r(\psi)$ in \eqref{eq:def_r_psi} that $\interior(r(\psi) B_n)\subseteq \dom (\Leg (\psi\circ \vartheta^{-1}))$ for every $\vartheta\in \On$. Since
\[
\Leg(T_{\mathbf{\Theta}_m}(\psi))(x)=\frac{1}{m} \sum_{i=1}^m \Leg \psi(\vartheta_i^{-1} x)
\]
for $x\in\Rn$, this furthermore shows that $\interior(r(\psi) B_n)\subseteq \dom(\Leg(T_{\mathbf{\Theta}_m}(\psi)))$ for every $\mathbf{\Theta}_m=\{\vartheta_1,\ldots,\vartheta_m\}\subseteq\SOn$. Similarly, since $\psi_\rot$ is defined as the convex conjugate of \eqref{eq:rot_mean_conj}, we have $\interior(r(\psi) B_n)\subseteq \dom(\Leg(\psi_\rot))$. Thus, if $Y_o\subseteq Y$ is as in the assertion, then Proposition~\ref{prop:C_intersection} shows that all of the considered functions are well-defined and are elements of $\Convc$. Furthermore, it is easy to see that such a set $Y_o$ exists. For example, one may take $Y_o$ to be the set of vertices of a simplex $T$ such that $o\in\interior(T)$ and such that $T\subseteq \interior(r(\psi)B_n)$.
\end{proof}

\begin{remark}
\label{re:conditions_necessary}
For a given $\psi\in\Convc$, it is easy to see that if $Y\subseteq \Rn$ is such that $q_{\psi,Y}\in\Convc$, then $q_{\psi_\rot,Y}$ is not necessarily proper. For example, let $\psi\coloneq h_{[-1,1]^n}\in\Convc$, in which case we have $\Leg \psi = \ind_{[-1,1]^n}$ and, using Lemma~\ref{le:conj_rot_epi-symm}, $\Leg \psi_{\rot} = \ind_{B_n}$. If we choose $Y$ to be the set of vertices of $[-1,1]^n$ (a set that positively spans $\Rn$), then $Y\subseteq \dom(\Leg \psi)$ but $Y\cap \dom(\Leg \psi_{\rot})=\emptyset$. Thus, Proposition~\ref{prop:C_intersection} shows that $q_{\psi,Y}\in\Convc$, while it follows from Lemma~\ref{le:vector_dom} that $q_{\psi_\rot,Y}\equiv -\infty$.

Conversely, if $Y\subseteq \Rn$ is such that $q_{\psi_\rot,Y}\in\Convc$, then it is also not guaranteed that $q_{\psi,Y}$ is proper. We demonstrate this with the following example, where, for simplicity, we assume $n=2$.  Let $\psi\in \ConvcT$ be such that
\[
\Leg \psi(x)=\frac{1}{(x_1+1)^{\frac 14}}+\ind_{B_2}(x)
\]
for $x=(x_1,x_2)\in\R^2$. Clearly, $\dom(\Leg \psi)=B_2\setminus\{(-1,0)\}$. On the other hand, for $|x|=1$ we use Lemma~\ref{le:conj_rot_epi-symm} together with polar coordinates to obtain
\begin{align*}
\Leg \psi_{\rot}(x)&=\frac{1}{2\pi} \int_0^{2\pi} \frac{1}{(1+\cos(t))^{\frac 14}} \,\mathrm{d} t.
\end{align*}
Since at $t=\pi$ we have
\[
1+\cos(t) \simeq \frac 12 (t-\pi)^2 + \mathcal{O}((t-\pi)^4),
\]
the integral above converges and, therefore, $B_2\subseteq \dom(\Leg \psi_\rot)$. Since, trivially, $\dom(\Leg \psi_\rot)\subseteq \dom(\ind_{B_2})=B_2$, this shows that
\[
\dom(\Leg \psi_\rot)=B_2.
\]
Now choose
\[
Y=\{(-1,0),(1/\sqrt{2},1/\sqrt{2}),(1/\sqrt{2},-1/\sqrt{2})\}.
\]
Clearly, $\pos(Y)=\R^2$. Furthermore, $Y\subseteq B_2= \dom(\Leg \psi_{\rot})$. Thus, Proposition~\ref{prop:C_intersection} shows that $q_{\psi_{\rot},Y}\in \Convc$. However, only two points of $Y$ are also in the domain of $\Leg \psi$. Since two points are not enough to positively span $\R^2$, this means that $q_{\psi,Y}$ is not an element of $\Convc$. Furthermore, considering an appropriate rotation epi-mean of $\psi$ with a total of three suitable rotations results in a function $\varphi$ such that $Y\cap \dom(\Leg \varphi)=\emptyset$ and thus, $q_{\varphi,Y}\equiv-\infty$.\dssymb
\end{remark}


\section{Topological properties}
\label{seq:topo}
In order to establish Theorem~\ref{Schneider-main-result-A}, Schneider proved in \cite[Lemma 1]{Schneider-1967} that $K\mapsto \Phi^*(K,\mathfrak{U})$ is upper semicontinuous. The purpose of this section is to establish an analogous result for convex functions. In the course of this, we will show that for finite slope sets $Y\subseteq\Rn$, building generalized outer linearizations is continuous with respect to epi-convergence. Furthermore, we will introduce local families of functions playing the role of Euclidean unit balls ``around'' a given convex function.

\medskip

For the proof of Lemma~\ref{le:p_psi_contin} below, let us recall Carath\'eodory's theorem (see, for example, \cite[Theorem 1.1.4]{SchneiderBook}) which states that if $x\in \conv (A)$ for some $A\subseteq \Rn$, then $x$ can be written as a convex combination of $(n+1)$ or fewer points of $A$. We will also use the following consequence of Lemma~\ref{le:pos_conv_origin} and Steinitz's theorem; see \cite[Theorem 1.3.10]{SchneiderBook}, and \cite[Section 1.3, Note 4]{SchneiderBook}.

\begin{lemma}
	\label{le:pos_basis}
	If $B\subseteq\Rn$ is such that $\pos(B)=\Rn$, then there exists some subset $B_o\subseteq B$ with at most $2n$ points such that also $\pos(B_o)=\Rn$.
\end{lemma}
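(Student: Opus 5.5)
Since $\pos(B)=\Rn$, Lemma~\ref{le:pos_conv_origin} shows that $o\in\interior(\conv(B))$. We will produce a subset $B_o\subseteq B$ with $|B_o|\leq 2n$ and $o\in\interior(\conv(B_o))$. Applying Lemma~\ref{le:pos_conv_origin} again, now to $B_o$, then gives $\pos(B_o)=\Rn$.

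The main input is Steinitz's theorem in the form of \cite[Theorem 1.3.10]{SchneiderBook}: if $x\in\interior(\conv(A))$ for some $A\subseteq\Rn$, then $x\in\interior(\conv(A'))$ for some $A'\subseteq A$ with at most $2n$ points. Taking $A=B$ and $x=o$ finishes the argument, so in the write-up the lemma is a direct combination of Lemma~\ref{le:pos_conv_origin} with Steinitz's theorem.

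If a self-contained argument is preferred, we would reduce to Carath\'eodory's theorem as follows. Choose $\varepsilon>0$ with $\varepsilon B_n\subseteq\conv(B)$. Put $x_i^{\pm}=\pm\varepsilon e_i$ for the standard basis vectors $e_i$. By Carath\'eodory, each of these $2n$ points is a convex combination of at most $n+1$ points of $B$. This yields a finite $B_1\subseteq B$ with $\conv(B_1)\supseteq\conv\{\pm\varepsilon e_i\}$, so $o\in\interior(\conv(B_1))$. The set $B_1$ can have up to $2n(n+1)$ points, so the real content of Steinitz's theorem is the reduction of a finite interior-spanning set to $2n$ points.

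That reduction is the step we expect to be the main obstacle. Our plan for it is to pick a line $L$ through $o$ in general position with respect to $B_1$. The line meets the boundary of the polytope $\conv(B_1)$ in two points $p^{\pm}$ lying in relative interiors of facets. Each $p^{\pm}$ is a convex combination of at most $n$ vertices, by Carath\'eodory applied within the $(n-1)$-dimensional facet. Taking the union $B_o$ of these at most $2n$ vertices, one then argues by genericity that $o$ lies in the interior of $\conv(B_o)$. Since the paper cites Steinitz directly, we would just invoke it rather than reprove it.
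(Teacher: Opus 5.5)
Your proposal is correct and matches the paper's approach: the paper gives the lemma as a direct consequence of Lemma~\ref{le:pos_conv_origin} combined with Steinitz's theorem \cite[Theorem 1.3.10]{SchneiderBook}, applied to $o\in\interior(\conv(B))$ exactly as you do. Your optional self-contained sketch is also sound, though not needed. It first uses Carath\'eodory to pass to a finite set, then takes a generic line through $o$ that meets two facets in relative interior points; this is the standard proof of Steinitz's theorem, and genericity is what places those points in relative interiors of $(n-1)$-simplices spanned by vertices.
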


\begin{lemma}
	\label{le:p_psi_contin}
	Let $\psi\in\Convc$, $\vartheta\in\On$, and let $Y\subseteq\Rn$ be such that $\pos(Y)=\Rn$ and $Y\subseteq \interior(\dom(\Leg (\psi\circ \vartheta^{-1})))$. If $\psi_j\in\Convc$, $j\in\N$, is an epi-convergent sequence with limit $\psi$, then there exists $j_o\in\N$ such that $q_{\psi_j\circ\vartheta^{-1},Y}\in\Convc$ for every $j\geq j_o$. In addition,
	\begin{equation}
		\label{eq:dom_leg_q_psi_j_U_subseteq}
		\dom(\Leg q_{\psi_j\circ\vartheta^{-1},Y})\subseteq \dom(\Leg q_{\psi\circ\vartheta^{-1},Y})
	\end{equation}
	for every $j\geq j_o$ and
	\begin{equation}
		\label{eq:limsup_leg_q_psi_j_U}
		\limsup\nolimits_{j\to \infty} \Leg q_{\psi_j\circ\vartheta^{-1},Y}(x)\leq \Leg q_{\psi\circ\vartheta^{-1},Y}(x)
	\end{equation}
	for every $x\in\interior(\dom(\Leg q_{\psi\circ\vartheta^{-1},Y}))$. Furthermore, if $Y$ is finite, then also
	\begin{equation}
		\label{eq:lim_leq_psi_j_U}
		\lim\nolimits_{j\to\infty}\Leg q_{\psi_j\circ\vartheta^{-1},Y}(x)=\Leg q_{\psi\circ\vartheta^{-1},Y}(x)
	\end{equation}
	for every $x\in\Rn$ and, therefore, $q_{\psi_j\circ\vartheta^{-1},Y}$ epi-converges to $q_{\psi\circ\vartheta^{-1},Y}$ as $j\to\infty$.
\end{lemma}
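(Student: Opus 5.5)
We may assume without loss of generality that $\vartheta$ is the identity. Indeed, $\psi_j\circ\vartheta^{-1}$ epi-converges to $\psi\circ\vartheta^{-1}$, and $\Leg(\psi\circ\vartheta^{-1})=\Leg\psi\circ\vartheta^{-1}$, so it suffices to treat $\psi_j\to\psi$ with $Y\subseteq\interior(\dom(\Leg\psi))$.

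\textbf{Existence of $j_o$.} By Lemma~\ref{le:conj_continuous}, $\Leg\psi_j$ epi-converges to $\Leg\psi$. Since $\dom(\Leg\psi)$ has nonempty interior (Lemma~\ref{le:conj_coercive}), Lemma~\ref{le:epi_conv_pointwise} shows that $\Leg\psi_j\to\Leg\psi$ uniformly on compact subsets of $\interior(\dom(\Leg\psi))$. By Lemma~\ref{le:pos_basis}, choose $Y_o\subseteq Y$ with at most $2n$ points and $\pos(Y_o)=\Rn$. The set $Y_o$ is a finite, hence compact, subset of the interior. So for large $j$ we have $\Leg\psi_j<\infty$ on $Y_o$, and Proposition~\ref{prop:C_intersection} gives $q_{\psi_j,Y}\in\Convc$. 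This fixes $j_o$.

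\textbf{Inclusion \eqref{eq:dom_leg_q_psi_j_U_subseteq}.} By Lemma~\ref{le:vector_dom}, the domain of $\Leg q_{\varphi,Y}$ is contained in $\operatorname{cl}\conv(Y\cap\dom\Leg\varphi)$. For $\varphi=\psi$ this equals $\operatorname{cl}\conv(Y)$, which is the largest possible, since $Y\subseteq\dom\Leg\psi$. Hence the inclusion holds for every $j\geq j_o$. A little care is needed because the epigraph formula involves a closure; to handle this, I would compare the projections of the epigraphs onto $\Rn$.

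\textbf{The limsup bound \eqref{eq:limsup_leg_q_psi_j_U}.} Take $x\in\interior(\dom\Leg q_{\psi,Y})$, so $x$ lies in the interior of $\conv(Y)$. Given $\varepsilon>0$, the epigraph formula together with Carath\'eodory's theorem lets us write $x=\sum_{i=1}^{n+1}\lambda_i y_i$ with $y_i\in Y$ and $\sum\lambda_i\Leg\psi(y_i)\le \Leg q_{\psi,Y}(x)+\varepsilon$. The interior of the convex hull of the epigraph points coincides with the interior of the closure, so $(x,\Leg q_{\psi,Y}(x)+\varepsilon)$ lies in $\conv$ of the generating set, not merely in its closure. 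Since each $y_i$ lies in $\interior(\dom\Leg\psi)$, we have $\Leg\psi_j(y_i)\to\Leg\psi(y_i)$. The convex-hull description for $\psi_j$ then gives
\[
\Leg q_{\psi_j,Y}(x)\le\sum\lambda_i\Leg\psi_j(y_i)\to\sum\lambda_i\Leg\psi(y_i)\le \Leg q_{\psi,Y}(x)+\varepsilon .
\]

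\textbf{The finite case \eqref{eq:lim_leq_psi_j_U}.} When $Y$ is finite, $\Leg q_{\varphi,Y}$ is the convex envelope of finitely many points:
\[
\Leg q_{\varphi,Y}(x)=\min\Big\{\sum\lambda_i\Leg\varphi(y_i):\ \sum\lambda_i y_i=x,\ \lambda\in\Delta\Big\},
\]
a linear program whose cost vector $(\Leg\psi_j(y))_{y\in Y}$ converges to $(\Leg\psi(y))_{y\in Y}$. The feasible set is a compact polytope independent of $j$ and is empty exactly when $x\notin\conv Y$. The minimum of a linear function over a fixed compact set is continuous in the cost vector, so pointwise convergence holds at every $x\in\Rn$, boundary points included, with the value $\infty$ outside $\conv Y$. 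Pointwise convergence everywhere to an l.s.c.\ convex function whose domain has nonempty interior implies epi-convergence by Lemma~\ref{le:epi_conv_pointwise}. Lemma~\ref{le:conj_continuous} then transfers this to $q_{\psi_j,Y}\to q_{\psi,Y}$.

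\textbf{Main obstacle.} The most delicate step is the infinite-$Y$ limsup. There one must justify that interior points are attained by finite convex combinations, i.e.\ that the closure in Lemma~\ref{le:vector_dom} does not matter in the interior. This uses the fact that a convex set and its closure have the same interior.
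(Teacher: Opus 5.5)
Your treatment of the existence of $j_o$, of \eqref{eq:limsup_leg_q_psi_j_U}, and of the finite case is correct and follows the paper's route. You use convergence of $\Leg\psi_j$ only at finitely many points of $Y$, and your linear-programming formula makes the paper's brief appeal to the epigraph formulas precise. The gap is the inclusion \eqref{eq:dom_leg_q_psi_j_U_subseteq}. Lemma~\ref{le:vector_dom} only yields
\[
\conv(Y)\subseteq\dom(\Leg q_{\psi,Y})\subseteq\operatorname{cl}\conv(Y),
\]
and you tacitly identify $\dom(\Leg q_{\psi,Y})$ with the upper bound $\operatorname{cl}\conv(Y)$. But the projection of the closed set $\epi(\Leg q_{\psi,Y})$ onto $\Rn$ need not be closed when $\Leg\psi$ is unbounded on $Y$. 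This cannot be repaired by ``comparing projections'', because for infinite $Y$ the inclusion is false.

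Here is a counterexample. Let $\Leg\psi(y)=(1-|y|^2)^{-1}$ for $|y|<1$ and $\Leg\psi_j(y)=(1+\frac1j-|y|^2)^{-1}$ for $|y|\le 1$, both $+\infty$ elsewhere, and let $Y=\interior B_n$. These functions are proper, l.s.c., convex and finite near $o$, so $\psi,\psi_j\in\Convc$. Since $\Leg\psi_j\to\Leg\psi$ pointwise off $\Sp$, $\psi_j$ epi-converges to $\psi$ by Lemmas~\ref{le:epi_conv_pointwise} and~\ref{le:conj_continuous}. Also $\pos(Y)=\Rn$ and $Y\subseteq\interior(\dom(\Leg\psi))$. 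Since $Y=\dom(\Leg\psi)$, we get $q_{\psi,Y}=\psi$. Since $Y$ is dense in $B_n$ and $\Leg\psi_j$ is continuous on $B_n$, also $q_{\psi_j,Y}=\psi_j$. Thus $\dom(\Leg q_{\psi_j,Y})=B_n\not\subseteq\interior B_n=\dom(\Leg q_{\psi,Y})$ for every $j$. The paper's one-line justification (``since $\Leg\psi(y)<\infty$ for every $y\in Y$'') overlooks the same closure issue.

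What does hold, and what your argument actually shows, is $\dom(\Leg q_{\psi_j,Y})\subseteq\operatorname{cl}\conv(Y)=\operatorname{cl}\dom(\Leg q_{\psi,Y})$ for $j\geq j_o$. The exact inclusion is true when $Y$ is finite, since then no closures occur. More generally it is true when $\Leg\psi\le M$ on $Y$, because then $\operatorname{cl}\conv(Y)\times\{M\}\subseteq\epi(\Leg q_{\psi,Y})$. The closure version is all that is needed later. In Lemma~\ref{le:p_psi_infconv_epi-conv} it gives $\Leg q_{\psi\infconv b_r(\psi),Y}=\infty$ off $\operatorname{cl}\dom(\Leg q_{\psi,Y})$. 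Together with the limsup/liminf bounds on the interior, this yields convergence on the dense set $\Rn\setminus\partial\dom(\Leg q_{\psi,Y})$. There the exact inclusion even holds, since $\Leg\psi$ is bounded on the compact set $C_r(\psi)\subseteq\interior(\dom(\Leg\psi))$. So you should state and prove the closure version rather than \eqref{eq:dom_leg_q_psi_j_U_subseteq} as written.
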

\begin{proof}
	Without loss of generality, we may assume that $\vartheta=\operatorname{Id}$. Since $\pos(Y)=\Rn$, it follows from Lemma~\ref{le:pos_basis} that there exists a finite subset $Y_o\subseteq Y$ such that $\pos(Y_o)=\Rn$. Since $Y\subseteq \interior(\dom(\Leg \psi))$, the set $K_o:=\conv(Y_o)$ is a compact subset of $\interior(\dom(\Leg \psi))$, and by Lemma~\ref{le:pos_conv_origin} it has nonempty interior. By Lemma~\ref{le:epi_conv_pointwise} and Lemma~\ref{le:conj_continuous}, the sequence $\Leg \psi_j$ converges uniformly to $\Leg \psi$ on $K_o$ and, in particular, there exists $j_o\in\N$ such that $K_o\subseteq \dom(\Leg \psi_j)$ for every $j\geq j_o$. It now follows from Proposition~\ref{prop:C_intersection}  that $q_{\psi_j,Y}\in \Convc$ for every $j\geq j_o$.
	
	Next, by Lemma~\ref{le:vector_dom} we have
	\begin{equation}
		\label{eq:epi_conj_q_psi_U}
		\epi(\Leg q_{\psi,Y}) = \operatorname{cl}\big(\conv\left\{(y,t)\in\Rn\times\R:\, y\in Y\text{ s.t. } y\in\dom(\Leg\psi), \,\Leg \psi(y)\leq t \right\}\big)
	\end{equation}
	and, similarly,
	\begin{equation}
		\label{eq:epi_conj_q_psi_j_U}
		\epi(\Leg q_{\psi_j,Y}) = \operatorname{cl}\big(\conv\left\{(y,t)\in\Rn\times\R:\, y\in Y\text{ s.t. } y\in\dom(\Leg\psi_j), \,\Leg \psi_j(y)\leq t \right\}\big)
	\end{equation}
	for all $j\geq j_o$. Since $\Leg \psi(y)<\infty$ for every $y\in Y$, the above immediately implies \eqref{eq:dom_leg_q_psi_j_U_subseteq}.
	
	In order to show \eqref{eq:limsup_leg_q_psi_j_U}, we fix an arbitrary $\bar{x}\in\interior(\dom(\Leg q_{\psi,Y}))$ and let $\varepsilon>0$. Set $\bar{t}:=\Leg q_{\psi,Y}(\bar{x})$ and observe that $(\bar{x},\bar{t}+\varepsilon)\in\interior( \epi(\Leg q_{\psi,Y}))$. Thus, by \eqref{eq:epi_conj_q_psi_U} and Carath\'eodory's theorem, there exists a finite number of points $(y_1,t_1),\ldots,(y_m,t_m)$ with $y_i\in Y$ and $\Leg \psi(y_i)\leq t_i$, $i\in\{1,\ldots,m\}$, $m\in\N$, and coefficients $\lambda_1,\ldots,\lambda_m\geq 0$ with $\sum_{i=1}^m \lambda_i=1$, such that
	\[
	(\bar{x},\bar{t}+\varepsilon)=\sum_{i=1}^m \lambda_i (y_i,t_i).
	\]
	In particular, this shows that
	\[
	\Leg q_{\psi,U}(\bar{x})+\varepsilon=\sum_{i=1}^m \lambda_i t_i.
	\]
	Similar to the first part of the proof, we set $K\coloneq\conv\{y_1,\ldots,y_m\}$ and observe that this is a compact subset of $\interior(\dom(\Leg \psi))$ and that $\Leg \psi_j$ converges uniformly to $\Leg \psi$ on $K$. Thus, there exists $j_1\in\N$ such that
	\[
	|\Leg \psi(x)-\Leg \psi_j(x)|<\varepsilon
	\]
    for every $x\in K$ and $j\geq j_1$. This shows that $\Leg \psi_j(y_i)< t_i + \varepsilon$ for every $i\in\{1,\ldots,m\}$ and $j\geq j_1$. By \eqref{eq:epi_conj_q_psi_j_U}, we therefore have
    \[
    \sum_{i=1}^m \lambda_i(y_i,t_i+\varepsilon) \in \epi(\Leg q_{\psi_j,Y}).
    \]
    Since
    \[
    \sum_{i=1}^m \lambda_i(y_i,t_i+\varepsilon)= (\bar{x},\bar{t}+2\varepsilon),
    \]
    it follows that
    \[
    \Leg q_{\psi_j,Y}(\bar{x})\leq \bar{t}+2\varepsilon=\Leg q_{\psi,Y}(\bar{x})+2\varepsilon
    \]
    for every $j\geq j_1$. Since $\varepsilon>0$ was arbitrary, this proves \eqref{eq:limsup_leg_q_psi_j_U}.
	
	It remains to show \eqref{eq:lim_leq_psi_j_U}. If $Y$ is finite, then  $\conv(Y)$ is a compact subset of $\interior(\dom(\Leg \psi))$ with nonempty interior and thus, by Lemma~\ref{le:epi_conv_pointwise} and Lemma~\ref{le:conj_continuous}, the sequence $\Leg \psi_j$ converges uniformly to $\Leg \psi$ on this set. Together with \eqref{eq:epi_conj_q_psi_U} and \eqref{eq:epi_conj_q_psi_j_U}, this proves \eqref{eq:lim_leq_psi_j_U}. Another application of Lemma~\ref{le:epi_conv_pointwise} and Lemma~\ref{le:conj_continuous} now shows that $q_{\psi_j,Y}$ epi-converges to $q_{\psi,Y}$ as $j\to\infty$.
\end{proof}

While we did not show continuity of $\psi\mapsto q_{\psi\circ\vartheta^{-1},Y}$ in general, Schneider's proof in \cite{Schneider-1967} of the lower semicontinuity of $K\mapsto \Phi^*(K,\mathscr{U})$ indicates that it is enough to show continuity for certain sequences. His proof exploits the following two elementary properties of Hausdorff convergence of convex bodies:
\begin{enumerate}
    \item $K+r B_n$ converges to $K$ (in a ``nice'' way) as $r\to 0^+$.
    \item For every $r>0$ and convergent sequence of convex bodies $K_j$ with limit $K\in \Kn$, there exists $j_o\in\N$ such that $K_j\subseteq K+r B_n$ for every $j\geq j_o$.
\end{enumerate}
This suggests looking for functions $b_r\in \Convc$, $r>0$, playing the role of $r B_n$, such that functional analogues of the above properties hold true, with Hausdorff convergence of convex bodies replaced by epi-convergence of convex functions, and using infimal convolution instead of Minkowski addition. Unfortunately, in contrast to the smaller space of super-coercive convex functions, it turns out that such a universal family of functional unit balls does not exist when working on $\Convc$; see Remark~\ref{re:unit_ball} below. It is, however, possible to find functions $b_r(\psi)$, $r>0$, depending on $\psi\in\Convc$, that play the role of unit balls around $\psi$ in the sense described above.

\medskip

For a given $\psi\in\Convc$ and $r>0$, let $b_r(\psi)$ be defined by
\[
b_r(\psi)\coloneq\Leg\big(\ind_{C_r(\psi)}+r\big),
\]
where
\begin{align*}
C_r(\psi)&\coloneq \tfrac 1r B_n \cap \Big\{x\in\dom (\Leg \psi) :\, d(x,\partial \dom (\Leg \psi))\geq \min\big\{r,\tfrac{r(\psi)}{2}\big\}\Big\}\\
&=\tfrac 1r B_n \cap \Big( \operatorname{cl}(\dom(\Leg \psi)) \div \min\big\{r,\tfrac{r(\psi)}{2}\big\} B_n \Big).
\end{align*}
Here, $A\div B=\bigcap_{b\in B}(A-b)$ denotes the \textit{Minkowski difference} of $A,B\subseteq \Rn$ (see, for example, \cite[Section 3.1]{SchneiderBook}), and $d(x,C)=\inf\{\|x-y\|_2:\,y\in C\}$ denotes the distance of the point $x\in\Rn$ to the closed set $C\subseteq\Rn$. In addition, we use the convention $C_r(\psi)=\tfrac 1r B_n$ in case $r(\psi)=\infty$ or, equivalently, $\dom(\Leg \psi)=\Rn$. Note that the convexity of $\dom(\Leg \psi)$ implies that also $C_r(\psi)$ is convex. Furthermore, observe that
\[
\tfrac 1r B_n \cap \tfrac{r(\psi)}{2} B_n \subseteq C_r(\psi)
\]
for every $r>0$. Since $r(\psi)>0$ (see Lemma~\ref{le:conj_coercive} and \eqref{eq:def_r_psi}), the set $C_r(\psi)$ is therefore always nonempty and contains the origin in its interior. Hence $o\in\interior(\dom(\Leg b_r(\psi)))$, which together with Lemma~\ref{le:conj_coercive} implies that $b_r(\psi)\in \Convc$ for every $r>0$.

In analogy to the role of Euclidean balls in the definition of the Hausdorff metric, we will use the functions $b_r(\psi)$ to describe epi-convergence. The family of functions $b_r(\psi)$, $r>0$, can also be interpreted as follows. By \cite[Theorem 13.3]{RockafellarBook}, the functions $b_r(\psi)$ approximate the \textit{recession function} of $\psi$ as $r\to 0^+$, which is the smallest function $\varphi$ such that
\[
\psi \Box \varphi = \psi,
\]
see \cite[Corollary 8.5.1]{RockafellarBook}. From this point of view, the next result is natural.

\begin{lemma}
\label{le:br_infconv_seq}
For $\psi\in\Convc$, the sequence $\psi \infconv b_r(\psi)$ epi-converges to $\psi$ as $r\to 0^+$.
\end{lemma}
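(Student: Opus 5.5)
The plan is to pass to the dual side. By Lemma~\ref{le:infconv_conj},
\[
\Leg\big(\psi\infconv b_r(\psi)\big)=\Leg\psi+\Leg b_r(\psi)=\Leg\psi+\ind_{C_r(\psi)}+r,
\]
where we used $\Leg b_r(\psi)=\ind_{C_r(\psi)}+r$, valid because $\ind_{C_r(\psi)}+r$ is proper, l.s.c.\ and convex, so Lemma~\ref{le:conj_is_nice} applies. By Lemma~\ref{le:conj_continuous}, it suffices to show that $\varphi_r\coloneq\Leg\psi+\ind_{C_r(\psi)}+r$ epi-converges to $\Leg\psi$ as $r\to0^+$, along an arbitrary sequence $r_j\to0^+$. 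Since $o\in\interior(\dom(\Leg\psi))$ by Lemma~\ref{le:conj_coercive}, the domain of $\Leg\psi$ has nonempty interior. Lemma~\ref{le:epi_conv_pointwise} therefore reduces the claim to pointwise convergence of $\varphi_{r_j}$ to $\Leg\psi$ on a dense subset of $\Rn$.

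For the dense set, take
\[
D\coloneq\interior(\dom(\Leg\psi))\cup\big(\Rn\setminus\operatorname{cl}(\dom(\Leg\psi))\big),
\]
whose complement is the boundary of the convex set $\dom(\Leg\psi)$. This boundary has empty interior, so $D$ is dense.

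For $x$ outside the closure of $\dom(\Leg\psi)$, we have $\varphi_r(x)=\infty=\Leg\psi(x)$ for all $r$, since $C_r(\psi)\subseteq\dom(\Leg\psi)$.

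For $x\in\interior(\dom(\Leg\psi))$, set $\delta\coloneq d(x,\partial\dom(\Leg\psi))>0$. Once $r<\delta$ and $r<1/|x|$ (with $1/|x|=\infty$ if $x=o$), the point $x$ lies in $C_r(\psi)$, because $\min\{r,r(\psi)/2\}\le r<\delta$ and $|x|\le 1/r$. Hence $\varphi_r(x)=\Leg\psi(x)+r\to\Leg\psi(x)$. When $r(\psi)=\infty$, the convention $C_r(\psi)=\tfrac1rB_n$ gives the same conclusion even more directly.

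The only delicate point is confirming that the hypotheses of Lemma~\ref{le:epi_conv_pointwise} hold for the limit function $\Leg\psi$: it must be lower semicontinuous and convex with domain of nonempty interior, which is given. The approximants $\varphi_r$ are convex, since $C_r(\psi)$ is convex, as noted above. One should also observe that pointwise convergence on the boundary of the domain is not required, and it may indeed fail there. After establishing epi-convergence of the conjugates along every sequence $r_j\to0^+$, Lemma~\ref{le:conj_continuous} transfers it back to $\psi\infconv b_{r_j}(\psi)\to\psi$, which proves the lemma.
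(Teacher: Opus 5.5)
Your proposal is correct and follows the same route as the paper. You dualize via Lemma~\ref{le:infconv_conj} to get $\Leg\psi+\ind_{C_r(\psi)}+r$, verify pointwise convergence to $\Leg\psi$ on the dense set $\Rn\setminus\partial\dom(\Leg\psi)$, and conclude with Lemma~\ref{le:epi_conv_pointwise} and Lemma~\ref{le:conj_continuous}; your explicit passage to sequences $r_j\to 0^+$ and your note that $\Leg b_r(\psi)=\ind_{C_r(\psi)}+r$ needs $C_r(\psi)$ to be closed and convex are small points of care that the paper leaves implicit.
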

\begin{proof}
By Lemma~\ref{le:infconv_conj}, we have $\Leg(\psi \infconv b_r(\psi))=\Leg \psi + \Leg b_r(\psi)$, and thus
\[
\Leg(\psi \infconv b_r(\psi))(x) = \begin{cases}
\Leg\psi(x)+r\quad &\text{if } x\in C_r(\psi)\\
\infty \quad &\text{else.}
\end{cases}
\]
Note that $C_r(\psi)\subseteq \interior(\dom(\Leg \psi))$ for every $r>0$, and for every $x_0\in\interior(\dom(\Leg \psi))$ there exists $r_0> 0$ such that $x_0\in C_r(\psi)$ for every $r\leq r_0$. Therefore, $\Leg(\psi \infconv b_r(\psi))$ converges pointwise to $\Leg\psi$ on $\Rn\setminus \partial \dom(\Leg \psi)$. The result now follows by Lemma~\ref{le:epi_conv_pointwise} together with Lemma~\ref{le:conj_continuous}.
\end{proof}

\begin{lemma}
\label{le:br_infconv_ineq}
If $\psi_j\colon \Rn\to(-\infty,\infty]$, $j\in\N$, is a sequence of convex functions that epi-converges to $\psi\in\Convc$, then for every $r>0$ there exists $j_0\in\N$ such that
\[
\psi_j \geq \psi \infconv b_r(\psi)
\]
for every $j\geq j_0$.
\end{lemma}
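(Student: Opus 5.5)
The plan is to dualize the inequality via the Legendre--Fenchel transform, which is order-reversing, and reduce it to a uniform estimate for the conjugates on the compact set $C_r(\psi)$.

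\emph{Reduction to conjugates.} Set $\varphi\coloneq\psi\infconv b_r(\psi)$, which lies in $\Convc$. By Lemma~\ref{le:infconv_conj} (as in the proof of Lemma~\ref{le:br_infconv_seq}), $\Leg\varphi=\Leg\psi+r$ on $C_r(\psi)$ and $\Leg\varphi=\infty$ elsewhere. Since $\psi_j\geq\Leg\Leg\psi_j$ always holds and $\Leg\Leg\varphi=\varphi$ by Lemma~\ref{le:conj_is_nice}, it suffices to show that eventually $\Leg\psi_j\leq\Leg\varphi$. Indeed, order reversal then gives $\psi_j\geq\Leg\Leg\psi_j\geq\Leg\Leg\varphi=\varphi$. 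The required inequality is automatic off $C_r(\psi)$, so everything reduces to the claim that
\[
\sup\nolimits_{x\in C_r(\psi)}\big(\Leg\psi_j(x)-\Leg\psi(x)\big)< r
\]
for all large $j$.

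\emph{Geometry of $C_r(\psi)$.} The set $C_r(\psi)$ is closed and contained in $\tfrac1r B_n$, so it is compact. Its points have distance at least $\min\{r,r(\psi)/2\}>0$ from $\partial\dom(\Leg\psi)$, so $C_r(\psi)\subseteq\interior(\dom(\Leg\psi))$. In particular, $\dom(\Leg\psi)$ has nonempty interior and $C_r(\psi)$ contains no boundary point of it. Hence, once we know that $\Leg\psi_j$ epi-converges to $\Leg\psi$, Lemma~\ref{le:epi_conv_pointwise} yields uniform convergence of $\Leg\psi_j$ to $\Leg\psi$ on $C_r(\psi)$. The displayed estimate then holds for all $j\geq j_0$, and the proof is complete.

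\emph{Main obstacle: regularity of $\psi_j$.} Lemma~\ref{le:conj_continuous} requires proper, l.s.c.\ convex functions, whereas the $\psi_j$ are only assumed convex. I would replace each $\psi_j$ by its l.s.c.\ hull $\operatorname{cl}\psi_j\leq\psi_j$. This hull has the same conjugate as $\psi_j$, and the sequence of hulls has the same epi-limit $\psi$, since epi-limits are insensitive to l.s.c.\ regularization. It then remains to check that $\operatorname{cl}\psi_j$ is proper for large $j$. An improper l.s.c.\ convex function takes only the values $\pm\infty$. Pick $x$ with $\psi(x)\in\R$ and a recovery sequence $x_j\to x$ with $\operatorname{cl}\psi_j(x_j)\to\psi(x)$. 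Then $\operatorname{cl}\psi_j(x_j)$ must be finite for large $j$, which forces $\operatorname{cl}\psi_j$ to be proper. Discarding finitely many indices, Lemma~\ref{le:conj_continuous} now gives the epi-convergence of $\Leg\psi_j=\Leg\operatorname{cl}\psi_j$ to $\Leg\psi$, which is what the second step needs.
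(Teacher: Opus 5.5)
Your proof is correct and follows the paper's argument. You reduce via Lemma~\ref{le:infconv_conj} to $\Leg\psi_j\leq\Leg\psi+r$ on $C_r(\psi)$, and then get this from uniform convergence of the conjugates on the compact set $C_r(\psi)\subseteq\interior(\dom(\Leg\psi))$ using Lemmas~\ref{le:epi_conv_pointwise} and~\ref{le:conj_continuous}. Your explicit Fenchel--Young/biconjugation step and your passage to l.s.c.\ hulls fill in details that the paper's two-line proof leaves implicit; the hulls are needed because the $\psi_j$ are only assumed convex, so Lemma~\ref{le:conj_continuous} does not apply to them directly.
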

\begin{proof}
By Lemma~\ref{le:infconv_conj}, we need to show that $\Leg \psi_j \leq \Leg \psi + \Leg b_r(\psi)$ for every $j\geq j_0$, which is equivalent to
\[
\Leg \psi_j \leq \Leg \psi + r
\]
on $C_r(\psi)$. By Lemma~\ref{le:epi_conv_pointwise} and Lemma~\ref{le:conj_continuous} together with the definition of $C_r(\psi)$, the sequence $\Leg \psi_j$ converges uniformly to $\Leg \psi$ on $C_r(\psi)$, which concludes the proof.
\end{proof}

\begin{remark}
\label{re:unit_ball}
Observe that it is not possible to find a family of functions $\bar{b}_r \in\Convc$ with $r>0$ that is independent of $\psi$, and that can be used in both Lemma~\ref{le:br_infconv_seq} and Lemma~\ref{le:br_infconv_ineq} instead of $b_r(\psi)$. In order to see this, let $\lambda >0$ be arbitrary and let $\psi=h_{\lambda B_n}$. Now for $j\in\N$, let $\psi_j = h_{(1-\frac 1j)\lambda B_n}$, which is an epi-convergent sequence with limit $\psi$. For a given $r>0$, it follows from Lemma~\ref{le:infconv_conj} that the function $\bar{b}_r$ would need to satisfy
$$\Leg \psi_j \leq \Leg \psi + \Leg \bar{b}_r$$
for every $j$ greater than or equal to some $j_0\in\N$. Since $\Leg h_K = \ind_K$ for every convex body $K$, this means that we must have $\Leg \bar{b}_r \equiv \infty$ on $\lambda B_n \setminus (1-\frac 1j) \lambda B_n$ for every $j\geq j_0$. In particular, by convexity, this implies $\Leg \bar{b}_r(x)=\infty$ whenever $|x|\geq \lambda$. Since $\lambda>0$ was arbitrary, we conclude
$$\Leg \bar{b}_r \equiv \infty$$
on $\Rn\setminus\{o\}$. Clearly, such a function cannot be used in the context of Lemma~\ref{le:br_infconv_seq}, and, in particular, by Lemma~\ref{le:conj_coercive}, it would no longer be coercive.

Note that if $\psi$ is super-coercive (or equivalently, $\dom(\Leg \psi)=\Rn$), then $b_r(\psi)=h_{\frac 1r B_n}-r$ for $r>0$, and in particular this function is independent of $\psi$. Indeed, as was shown in \cite[Lemma 5.13]{Mussnig-Li}, this function can be used in Lemma~\ref{le:br_infconv_seq}.\dssymb
\end{remark}

\begin{lemma}
\label{le:p_psi_infconv_epi-conv}
Let $\psi\in\Convc$, $\vartheta\in\On$, and let $Y\subseteq\Rn$ be such that $\pos(Y)=\Rn$ and $Y\subseteq \interior(\dom(\Leg (\psi\circ \vartheta^{-1})))$. There exists $r_o>0$ such that $q_{(\psi \infconv b_r(\psi))\circ\vartheta^{-1},Y}\in \Convc$ for every $r\leq r_o$. In addition, $q_{(\psi \infconv b_r(\psi))\circ\vartheta^{-1},Y}$ epi-converges to $q_{\psi\circ\vartheta^{-1},Y}$ as $r\to 0^+$.
\end{lemma}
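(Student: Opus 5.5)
Since $\Leg(\varphi\circ\vartheta^{-1})=(\Leg\varphi)\circ\vartheta^{-1}$ for $\vartheta\in\On$, replacing $\dom(\Leg\psi)$ and $C_r(\psi)$ by their images under $\vartheta$ changes nothing below. So, as in Lemma~\ref{le:p_psi_contin}, I will assume $\vartheta=\operatorname{Id}$, i.e.\ $Y\subseteq\interior(\dom(\Leg\psi))$. The plan is to work entirely on the conjugate side.

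\textbf{Step 1 (conjugate of the smoothed function).} Set $\varphi_r\coloneq\psi\infconv b_r(\psi)$. By Lemma~\ref{le:infconv_conj} (as in the proof of Lemma~\ref{le:br_infconv_seq}), $\Leg\varphi_r=\Leg\psi+r$ on $C_r(\psi)$ and $\Leg\varphi_r=\infty$ elsewhere. Two facts about $C_r(\psi)$ will be used throughout.
\begin{enumerate}[(i)]
\item $C_r(\psi)$ increases as $r$ decreases, because both $\tfrac1r B_n$ and $\operatorname{cl}(\dom(\Leg\psi))\div\min\{r,\tfrac{r(\psi)}2\}B_n$ grow.
\item Every compact $K\subseteq\interior(\dom(\Leg\psi))$ lies in $C_r(\psi)$ for all sufficiently small $r$, since $K$ is bounded and has positive distance to $\partial\dom(\Leg\psi)$.
\end{enumerate}
From the formula for $\Leg\varphi_r$ we also get $\Leg\varphi_r\ge\Leg\psi$, hence $\varphi_r\le\psi$.

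\textbf{Step 2 (membership in $\Convc$).} By Lemma~\ref{le:pos_basis}, there is a finite $Y_o\subseteq Y$ with $\pos(Y_o)=\Rn$. Then $\conv(Y_o)$ is a compact subset of $\interior(\dom(\Leg\psi))$, so by (ii) there is $r_o>0$ with $Y_o\subseteq C_r(\psi)\subseteq\dom(\Leg\varphi_r)$ for all $r\le r_o$. Proposition~\ref{prop:C_intersection} then gives $q_{\varphi_r,Y}\in\Convc$ for every $r\le r_o$.

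\textbf{Step 3 (pointwise convergence of conjugates).}
\begin{enumerate}[(a)]
\item \emph{Lower bound.} By Lemma~\ref{le:vector_dom}, $\Leg q_{\varphi_r,Y}$ has epigraph
\[
\operatorname{cl}\conv\{(y,t):\, y\in Y\cap C_r(\psi),\ t\ge\Leg\psi(y)+r\}.
\]
This set is contained in the corresponding set for $\psi$, so $\Leg q_{\varphi_r,Y}\ge\Leg q_{\psi,Y}$. In particular $\Leg q_{\varphi_r,Y}=\infty$ outside $\dom(\Leg q_{\psi,Y})$, matching the limit there.
\item \emph{Upper bound at interior points.} Fix $\bar x\in\interior(\dom(\Leg q_{\psi,Y}))$ and $\varepsilon>0$, and repeat the Carath\'eodory argument from Lemma~\ref{le:p_psi_contin}. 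Write $(\bar x,\Leg q_{\psi,Y}(\bar x)+\varepsilon)=\sum_i\lambda_i(y_i,t_i)$ with finitely many $y_i\in Y$ and $t_i\ge\Leg\psi(y_i)$. By (ii), for $r$ small all $y_i$ lie in $C_r(\psi)$, so $(y_i,t_i+r)$ belongs to the epigraph of $\Leg q_{\varphi_r,Y}$. Taking the same convex combination gives
\[
\Leg q_{\varphi_r,Y}(\bar x)\le\Leg q_{\psi,Y}(\bar x)+\varepsilon+r .
\]
\end{enumerate}
Together, (a) and (b) give $\Leg q_{\varphi_r,Y}\to\Leg q_{\psi,Y}$ pointwise on $\Rn\setminus\partial\dom(\Leg q_{\psi,Y})$. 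This set is dense, and the limit's domain has nonempty interior because it contains $\conv(Y_o)$, which contains $o$ in its interior by Lemma~\ref{le:pos_conv_origin}.

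\textbf{Step 4 (conclusion).} Lemma~\ref{le:epi_conv_pointwise} yields $\Leg q_{\varphi_{r_j},Y}\to\Leg q_{\psi,Y}$ in the epi-sense along every sequence $r_j\to0^+$. Lemma~\ref{le:conj_continuous} then transfers this to $q_{\varphi_r,Y}$ epi-converging to $q_{\psi,Y}$.

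\textbf{Main obstacle.} The delicate point is the upper bound in Step 3(b). The generating points $(y,t)$ with $y\in Y\setminus C_r(\psi)$ are lost at each fixed $r$, and the convergence at $\bar x$ cannot be read off directly from the closed convex hull. Carath\'eodory's theorem is what reduces this to finitely many $y_i$, so that fact (ii) can be applied to them simultaneously. The monotonicity in (i) is a convenience rather than a necessity.
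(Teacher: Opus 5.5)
Your proof is correct and is essentially the paper's argument: the lower bound comes from the inclusion of generating sets in the epigraph formula of Lemma~\ref{le:vector_dom}, where the paper records the slightly sharper $\Leg q_{\psi,Y}+r\le \Leg q_{\psi\infconv b_r(\psi),Y}$, and the upper bound is the Carath\'eodory argument of Lemma~\ref{le:p_psi_contin}. The only difference is that the paper obtains membership in $\Convc$ and the $\limsup$ bound by citing Lemma~\ref{le:p_psi_contin} together with Lemma~\ref{le:br_infconv_seq}, whereas you re-run that argument directly using $\Leg(\psi\infconv b_r(\psi))=\Leg\psi+r$ on $C_r(\psi)$.
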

\begin{proof}
Without loss of generality, we may assume that $\vartheta=\operatorname{Id}$. It immediately follows from Lemma~\ref{le:p_psi_contin} and Lemma~\ref{le:br_infconv_seq} that $q_{\psi \infconv b_r(\psi),Y}\in \Convc$ and $\dom(\Leg q_{\psi \infconv b_r(\psi),Y})\subseteq \dom(\Leg q_{\psi,Y})$
for every $r\leq r_o$ with some $r_o>0$. Furthermore,
\[
\limsup_{r\to 0^+} \Leg q_{\psi \infconv b_r(\psi),Y}(x) \leq \Leg q_{\psi,Y}(x)
\]
for every $x\in\interior(\dom(\Leg q_{\psi,Y}))$. Therefore, by Lemma~\ref{le:epi_conv_pointwise} and Lemma~\ref{le:vector_dom}, it remains to show that
\begin{equation}
\label{eq:liminf_q_psi_infconv}
\liminf_{r\to 0^+} \Leg q_{\psi \infconv b_r(\psi),Y}(x) \geq \Leg q_{\psi,Y}(x).
\end{equation}
By the definition of $b_r(\psi)$, Lemma~\ref{le:infconv_conj}, and  Lemma~\ref{le:vector_dom} we have
\begin{equation*}
	\epi(\Leg q_{\psi \infconv b_r(\psi),Y})= \operatorname{cl}\big(\conv\left\{(y,t+r)\in\R^n\times\R:\, y\in Y,\, y\in C_r(\psi),\, \Leg \psi(y)\leq t \right\}\big).
\end{equation*}
Since
\begin{equation*}
	\epi(\Leg q_{\psi,Y}) = \operatorname{cl}\big(\conv\left\{(y,t)\in\R^n\times\R:\, y\in Y, \,y\in\dom(\Leg\psi),\,\Leg \psi(y)\leq t \right\}\big)
\end{equation*}
and $C_r(\psi)\subseteq \dom(\Leg \psi)$, it follows that
\[
\Leg q_{\psi,Y}(x)<\Leg q_{\psi,Y}(x)+r\leq \Leg q_{\psi \infconv b_r(\psi),Y}(x)
\]
for every $x\in\Rn$ and $r\leq r_o$. In particular, this implies \eqref{eq:liminf_q_psi_infconv}.
\end{proof}

\begin{remark}
Looking at the proof of Lemma~\ref{le:p_psi_infconv_epi-conv} and considering that $C_r(\psi)$ approximates $\dom (\Leg \psi)$ from the inside, one might be tempted to think that for every $x\in\interior(\dom \Leg \psi)$, the equality
\[
\Leg q_{\psi,Y}(x)+r = \Leg q_{\psi \infconv b_r(\psi),Y}(x)
\]
is attained whenever $r>0$ is sufficiently small. However, in general, this is not true, as the following example in $\R^2$ shows. Let $\psi\in \ConvcT$ be such that
\[
\Leg \psi(x)=x_1^2
\]
for $x=(x_1,x_2)\in\R^2$, and consider the set  $Y\subseteq\R^2$ defined by
\[
Y\coloneq\{(0,0)\}\cup\{(1,x_2) :\, x_2\in\R\}\cup \{(-1,x_2) : \, x_2\in\R\}.
\]
It follows that
\[
\Leg q_{\psi,Y}(x)=|x_1|+\mathbf{I}_{[-1,1]}(x_1),
\]
and, therefore, the line $\{(0,x_2):\,x_2\in\R\}$ is contained in the interior of $\dom(\Leg q_{\psi,Y})$. Now observe that for $x_2\neq 0$, the points $((0,x_2),0)$, which are in the boundary of $\epi (\Leg q_{\psi,Y})$, are not in the closure of the convex combination of any number of points of the form $(y,t)$ with $y\in C_r(\psi)$ and $\Leg \psi(y)\leq t$. In particular, for these points, we have
\[
\Leg q_{\psi,Y}((0,x_2)) + r < \Leg q_{\psi\infconv b_r(\psi),Y}((0,x_2))
\]
for every $r>0$.\dssymb
\end{remark}

Following the ideas presented in \cite[Lemma 1]{Schneider-1967}, together with the new tools introduced in this section, we can now establish the upper semicontinuity of $\psi\mapsto \Phi^*(\psi,\mathcal{Y})$, recalling its definition in \eqref{mainDef}. For this, we need to formally introduce the following convention. In Proposition~\ref{upper-semi-lem} we consider expressions of the form $\Phi^*(\psi_j,\mathcal{Y})$, where we can a priori not guarantee that $\mathcal{Y}$ is in $\Ycoll_{\psi_j}$. In particular, it may happen that $q_{\psi_j\circ \vartheta^{-1},Y}$ is no longer an element of $\Convc$, in which case we set $\Phi(q_{\psi_j\circ \vartheta^{-1},Y})=\infty$.

\begin{proposition}
\label{upper-semi-lem}
Let $\psi\in \Convc$ and $\mathcal{Y} \subseteq \Ycoll_\psi$. If $\psi_j\in \Convc$, $j\in\N$, epi-converges to $\psi$ as $j\to\infty$, then for every $\vartheta\in\On$ and $Y\in \mathcal{Y}$ there exists $j_o=j_o(Y,\vartheta)\in\N$ such that $q_{\psi_j\circ\vartheta^{-1},Y}\in\Convc$ for every $j\geq j_o$. In addition, if $\Phi\colon \Convc\to[-\infty,\infty]$ is upper semicontinuous, monotone decreasing, and concave with respect to infimal convolution, then
\begin{equation}
\label{eq:limsup_phi_star}
\limsup\nolimits_{j\to\infty} \Phi^*(\psi_j,\mathcal{Y})\leq \Phi^*(\psi,\mathcal{Y}).
\end{equation}
In other words, $\Phi^*(\cdot,\mathcal{Y})$ is upper semicontinuous at $\psi$.
\end{proposition}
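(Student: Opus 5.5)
The argument follows Schneider's \cite[Lemma 1]{Schneider-1967}, with the functions $b_r(\psi)$ taking the place of $rB_n$.

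\emph{Existence for large $j$.} Fix $\vartheta\in\On$ and $Y\in\mathcal{Y}$. Since $\mathcal{Y}\subseteq\Ycoll_\psi$, there is $Y_o\subseteq Y$ with $\pos(Y_o)=\Rn$ and $Y_o\subseteq\interior(r(\psi)B_n)$. By Lemma~\ref{le:pos_basis} we may assume $Y_o$ is finite. Because $\interior(r(\psi)B_n)$ is rotation invariant, we get $Y_o\subseteq\interior(\dom(\Leg(\psi\circ\vartheta^{-1})))$. Lemma~\ref{le:p_psi_contin}, applied to $Y_o$, gives $j_o$ with $q_{\psi_j\circ\vartheta^{-1},Y_o}\in\Convc$ for $j\ge j_o$.

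To pass from $Y_o$ to $Y$, note that $Y_o\subseteq\dom(\Leg(\psi_j\circ\vartheta^{-1}))$ for $j\ge j_o$, because $\Leg\psi_j\to\Leg\psi$ uniformly on the compact set $\conv(\vartheta^{-1}Y_o)\subseteq\interior(\dom\Leg\psi)$. Proposition~\ref{prop:C_intersection} then gives $q_{\psi_j\circ\vartheta^{-1},Y}\in\Convc$.

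\emph{Upper semicontinuity.} We may assume $\Phi^*(\psi,\mathcal{Y})<\infty$. Fix a real $c>\Phi^*(\psi,\mathcal{Y})$ and choose $\vartheta,Y$ with $\Phi(q_{\psi\circ\vartheta^{-1},Y})<c$. The goal is to show $\Phi^*(\psi_j,\mathcal{Y})\le\Phi(q_{\psi_j\circ\vartheta^{-1},Y})<c$ for large $j$. We do this by comparing with an intermediate function.

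First, by Lemma~\ref{le:br_infconv_ineq}, for each $r>0$ there is $j_0(r)$ with $\psi_j\ge\psi\infconv b_r(\psi)$ for all $j\ge j_0(r)$. Rotating preserves this inequality, and for $y$ in the relevant domains $\Leg$ is order reversing. Hence $\ell_{\psi_j\circ\vartheta^{-1},y}\ge\ell_{(\psi\infconv b_r(\psi))\circ\vartheta^{-1},y}$, and taking suprema,
\[
q_{\psi_j\circ\vartheta^{-1},Y}\ge q_{(\psi\infconv b_r(\psi))\circ\vartheta^{-1},Y}.
\]
Since $\Phi$ is monotone decreasing,
\[
\Phi(q_{\psi_j\circ\vartheta^{-1},Y})\le\Phi(q_{(\psi\infconv b_r(\psi))\circ\vartheta^{-1},Y}).
\]
Here we need the right-hand side to lie in $\Convc$ for small $r$. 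This follows from Lemma~\ref{le:p_psi_infconv_epi-conv} applied to $Y_o$ together with $Y_o\subseteq C_r(\psi)$ for small $r$, and Proposition~\ref{prop:C_intersection}.

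It remains to show $\Phi(q_{(\psi\infconv b_r(\psi))\circ\vartheta^{-1},Y})<c$ for some small $r$. If $Y$ were finite with $Y\subseteq\interior\dom$, Lemma~\ref{le:p_psi_infconv_epi-conv} would give epi-convergence to $q_{\psi\circ\vartheta^{-1},Y}$ as $r\to0^+$, and upper semicontinuity of $\Phi$ would finish the argument.

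\emph{Main obstacle: general $Y$.} Here $Y$ may be infinite, or may contain boundary points of $\dom\Leg\psi$, so Lemma~\ref{le:p_psi_infconv_epi-conv} does not apply directly. The first thing I would try is to reduce to a good finite subset. Using upper semicontinuity, pick a finite $Y'\subseteq Y\cap\interior(\dom\Leg(\psi\circ\vartheta^{-1}))$ containing $Y_o$ such that $\Phi(q_{\psi\circ\vartheta^{-1},Y'})<c$. Then Lemma~\ref{q-is-monotone-wrt-inclusion} and monotonicity of $\Phi$ let us replace $Y$ by $Y'$ throughout, since
\[
\Phi(q_{\psi_j\circ\vartheta^{-1},Y})\le\Phi(q_{\psi_j\circ\vartheta^{-1},Y'}).
\]

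Producing such a $Y'$ requires that $q_{\psi\circ\vartheta^{-1},Y'}$ epi-converge to $q_{\psi\circ\vartheta^{-1},Y}$ along an increasing family of finite $Y'$. I would argue this on the conjugate side via Lemma~\ref{le:vector_dom}: the epigraph of $\Leg q$ is the closed convex hull of the points $(y,t)$. Two issues arise:
\begin{itemize}
\item Boundary slopes must be approximated. For this I would use segments toward $Y_o$: replace $y$ by $(1-s)y+s y_0$, which gives a lower value of the convex function $\Leg\psi$ up to $s\cdot$const. These points are not in $Y$, however, so a direct replacement fails.
\item The fallback is to approximate within $Y$ by Carathéodory. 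For points in the interior of the domain of $\Leg q$, proceed as in the proof of \eqref{eq:limsup_leg_q_psi_j_U}: finitely many points of $Y$ suffice up to $\varepsilon$. This gives pointwise convergence on a dense set, and then Lemma~\ref{le:epi_conv_pointwise}. Boundary points of $Y$ would be handled by the lower bound coming from monotonicity in $Y'$.
\end{itemize}

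I expect this approximation step, controlling slopes of $Y$ on $\partial\dom(\Leg\psi)$, to be the delicate part.
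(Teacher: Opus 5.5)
\textbf{There is a genuine gap, and it cannot be closed, because the proposition is false as stated.} Up to the reduction to showing $\Phi(q_{(\psi\infconv b_r(\psi))\circ\vartheta^{-1},Y})<c$ for some small $r>0$, your argument matches the paper's: the existence part and the comparison via $\psi_j\ge\psi\infconv b_r(\psi)$ are what the paper does.

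The gap is the step you call the main obstacle. Your reduction needs a finite $Y'\subseteq Y\cap\interior(\dom(\Leg(\psi\circ\vartheta^{-1})))$ with $\Phi(q_{\psi\circ\vartheta^{-1},Y'})<c$. Nothing forces the boundary slopes of $Y$ to be approximable by interior slopes of $Y$. Also, ``monotonicity in $Y'$'' points the wrong way: $q_{\psi\circ\vartheta^{-1},Y'}\le q_{\psi\circ\vartheta^{-1},Y}$ gives $\Phi(q_{\psi\circ\vartheta^{-1},Y'})\ge\Phi(q_{\psi\circ\vartheta^{-1},Y})$, a lower bound rather than the upper bound you need. The paper gets past this point only by applying Lemma~\ref{le:p_psi_infconv_epi-conv} to $Y$ itself. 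But that lemma's hypothesis $Y\subseteq\interior(\dom(\Leg(\psi\circ\vartheta^{-1})))$ is not implied by $Y\in\Ycoll_\psi$, so you have found a real hole in the paper's proof.

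\textbf{Counterexample.} Let $\psi=|\cdot|$, so $\Leg\psi=\ind_{B_n}$ and $r(\psi)=1$. Let $Y_o\subseteq\frac12B_n$ be the vertex set of a simplex with $o$ in its interior, and set $Y=\Sp\cup Y_o$ and $\mathcal{Y}=\{Y\}\subseteq\Ycoll_\psi$. Take $\psi_j=(1-\frac1j)|\cdot|$ for $j\ge2$, and $\Phi(\varphi)=-\varphi(x_0)$ for a fixed $x_0\in\Sp$. This $\Phi$ satisfies the hypotheses:
\begin{itemize}
\item it is upper semicontinuous by the liminf part of epi-convergence;
\item it is concave because $(\lambda\sq\varphi_1\infconv(1-\lambda)\sq\varphi_2)(x_0)\le\lambda\varphi_1(x_0)+(1-\lambda)\varphi_2(x_0)$;
\item it is monotone decreasing.
\end{itemize}
By rotation invariance, $q_{\psi\circ\vartheta^{-1},Y}=\sup_{y\in Y}\langle\cdot,y\rangle=|\cdot|$, so $\Phi^*(\psi,\mathcal{Y})=-1$. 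But $\Sp\cap\dom(\Leg\psi_j)=\emptyset$, so $q_{\psi_j\circ\vartheta^{-1},Y}=h_{\conv Y_o}\le\frac12|\cdot|$. Hence $\Phi^*(\psi_j,\mathcal{Y})\ge-\frac12$ for all $j$, contradicting \eqref{eq:limsup_phi_star}. The same failure occurs for $\Phi(\varphi)=\log\int_{\Rn}e^{-\varphi(x)}\dint x$, with a gap of at least $n\log 2$. Correspondingly, $q_{\psi\infconv b_r(\psi),Y}=h_{\conv Y_o}-r$ for $0<r\le\frac12$, which does not epi-converge to $q_{\psi,Y}=|\cdot|$. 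So Lemma~\ref{le:p_psi_infconv_epi-conv} genuinely fails without its interior hypothesis.

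\textbf{What is true.} Your argument proves the statement, with no approximation step, under an extra assumption: $Y\cap\dom(\Leg(\psi\circ\vartheta^{-1}))\subseteq\interior(\dom(\Leg(\psi\circ\vartheta^{-1})))$ for all $\vartheta$ and all $Y\in\mathcal{Y}$. An example is $Y\subseteq\interior(r(\psi)B_n)$, as for the slope sets $\mathcal{Y}_{\psi,N}$ in the proof of Theorem~\ref{thm:outer_log_urysohn}. Under this assumption, slopes outside the domain give $\ell\equiv-\infty$ for both $\psi\circ\vartheta^{-1}$ and $(\psi\infconv b_r(\psi))\circ\vartheta^{-1}$, since $C_r(\psi)\subseteq\dom(\Leg\psi)$. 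You may therefore discard them and apply Lemma~\ref{le:p_psi_infconv_epi-conv} directly; finiteness of $Y$ is not needed there. For $Y=\Rn$, use $q_{\varphi,\Rn}=\varphi$ together with Lemma~\ref{le:br_infconv_seq} instead.
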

\begin{proof}
Throughout the proof, let $\psi\in \Convc$ and $\mathcal{Y}\subseteq\Ycoll_\psi$ be given. For $Y\in\mathcal{Y}$ and $\vartheta\in\On$, it follows from the definitions of $\Ycoll_\psi$ and $r(\psi)$ that there exists $Y_o\subseteq Y$ such that $\pos(Y_o)=\Rn$ and $Y_o\subseteq \interior(\dom(\Leg(\psi\circ\vartheta^{-1})))$. Thus, Lemma~\ref{le:p_psi_contin} shows that there exists $j_o\in\N$ such that the functions $q_{\psi_j\circ\vartheta^{-1},Y_o}$ are elements of $\Convc$ for every $j\geq j_o$. It now follows from Lemma~\ref{q-is-monotone-wrt-inclusion} that also
\[
q_{\psi_j\circ\vartheta^{-1},Y}\in\Convc
\]
for every $j\geq j_o$.

Now fix $\varepsilon>0$ and assume, without loss of generality, that $\Phi^*(\psi,\mathcal{Y})<\infty$. By the definition of $\Phi^*(\psi,\mathcal{Y})$, there exist $\vartheta\in\SOn$ (or $\vartheta\in\Oo$ if $n=1$, respectively) and $Y\in\mathcal{Y}$ such that
\[
\Phi(q_{\psi\circ\vartheta^{-1},Y}) < \Phi^*(\psi,\mathcal{Y})+\varepsilon.
\]
By Lemma~\ref{le:p_psi_infconv_epi-conv} and the upper semicontinuity of $\Phi$, there exists $r>0$ such that
\[
\Phi(q_{(\psi\infconv b_r(\psi))\circ\vartheta^{-1},Y}) \leq \Phi(q_{\psi\circ\vartheta^{-1},Y})+\varepsilon.
\]
Moreover, by Lemma~\ref{le:br_infconv_ineq}, there exists $j_1\in\N$ such that $\psi_j \geq \psi \infconv b_r(\psi)$ for every $j\geq j_1$, which trivially implies that
\[
q_{\psi_j\circ\vartheta^{-1},Y} \geq q_{(\psi\infconv b_r(\psi))\circ\vartheta^{-1},Y}
\]
for every $j\geq j_1$. Together with the monotonicity of $\Phi$, we obtain
\[
\Phi(q_{\psi_j\circ\vartheta^{-1},Y}) \leq \Phi(q_{(\psi\infconv b_r(\psi))\circ\vartheta^{-1},Y}) \leq \Phi(q_{\psi\circ\vartheta^{-1},Y})+\varepsilon < \Phi^*(\psi,\mathcal{Y})+2\varepsilon.
\]
Thus, by the definition of $\Phi^*$,
\[
\Phi^*(\psi_j,\mathcal{Y})\leq \Phi^*(\psi,\mathcal{Y})+2\varepsilon
\]
for every $j\geq j_1$. This proves \eqref{eq:limsup_phi_star}, which means that $\Phi^*(\cdot,\mathcal{Y})$ is upper semicontinuous at $\psi$.
\end{proof}

We close this section with a stability result for outer linearizations under scaling of a finite set of slopes. We shall later make use of this in Section~\ref{suse:approx}.

First, let us recall an elementary fact about convex functions. While convex functions are generally continuous only on the interiors of their domains, lower semicontinuous convex functions are continuous relative to every line segment in their domain. See, for example, \cite[Theorem 2.35]{Rockafellar-Wets}.

\begin{lemma}
\label{le:conv_onedim_cont}
Let $\varphi\colon\Rn\to{(-\infty,\infty]}$ be lower semicontinuous and convex. If $I\subseteq \dom(\varphi)$ is a line segment, then the restriction $\varphi\vert_I$ is continuous.
\end{lemma}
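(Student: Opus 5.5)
The plan is to reduce the statement to continuity of a convex function of one real variable. Set $g(t)\coloneq\varphi(x_0+t(x_1-x_0))$ for $t\in[0,1]$, where $I=[x_0,x_1]\subseteq\dom(\varphi)$. The degenerate case $x_0=x_1$ is trivial. Then $g\colon[0,1]\to\R$ is convex and lower semicontinuous, being the composition of $\varphi$ with an affine map. It is also finite-valued, since $I\subseteq\dom(\varphi)$ and $\varphi>-\infty$. Continuity of $\varphi\vert_I$ is equivalent to continuity of $g$ on $[0,1]$, so it suffices to treat $g$.

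At interior points $t\in(0,1)$, continuity is the standard fact that a finite convex function on an open interval is continuous, cf.\ \cite[Theorem 25.5]{RockafellarBook}. I would recall the quick argument: for $0<s<t<u<1$, convexity gives the three-slope inequality. Hence the difference quotients at $t$ are bounded from both sides by quotients involving fixed points $s,u$, and $g$ is Lipschitz near $t$.

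The only issue is at the endpoints, say $t=0$; the case $t=1$ is symmetric. Convexity gives, for $t\in(0,1]$,
\[
g(t)\leq (1-t)g(0)+t\,g(1),
\]
so $\limsup_{t\to0^+}g(t)\leq g(0)$. Lower semicontinuity gives $\liminf_{t\to0^+}g(t)\geq g(0)$. Together these yield $\lim_{t\to0^+}g(t)=g(0)$, which is continuity at the endpoint.

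The main (mild) point to watch is precisely this endpoint step. Here convexity supplies only upper semicontinuity along the segment, which is why the lower semicontinuity hypothesis is indispensable. Finiteness of $g(1)$ is also needed, and it holds because the whole segment lies in $\dom(\varphi)$. Alternatively, one may simply cite \cite[Theorem 2.35]{Rockafellar-Wets}.
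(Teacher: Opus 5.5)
Your proof is correct. The paper gives no argument for this lemma and simply cites \cite[Theorem 2.35]{Rockafellar-Wets}, which you also mention as an alternative. What you add is a self-contained elementary proof.

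Your structure is as follows. You reduce to the one-variable function $g$ and use continuity of finite convex functions at interior points. At each endpoint you combine the chord inequality $g(t)\leq(1-t)g(0)+t\,g(1)$, which gives $\limsup_{t\to0^+}g(t)\leq g(0)$, with lower semicontinuity, which gives the matching $\liminf$.

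The citation is shorter. Your argument shows exactly where each hypothesis enters. Convexity alone only controls the $\limsup$, so the endpoint value may exceed the one-sided limit; for example, $g(0)=1$ and $g\equiv 0$ on $(0,1]$ is convex but discontinuous. Lower semicontinuity is precisely what excludes this. Finiteness of $\varphi$ on all of $I$ is what makes the chord bound meaningful.

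Your write-up treats a closed segment $[x_0,x_1]$. This is the case used in the paper, namely $I=[0,y_i]$ in the proof of Lemma~\ref{le:interior-slopes-dense}. For a half-open segment, you would replace $g(1)$ by $g(s)$ for some interior $s$ in the chord inequality.
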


\begin{lemma}
\label{le:interior-slopes-dense}
Let $\psi\in\Convc$ and let $Y=\{y_1,\ldots,y_N\}\subseteq\dom(\Leg\psi)$ be a finite set such that $\pos(Y)=\Rn$. For every $t\in(0,1)$, we have
\[
tY=\{ty:\,y\in Y\}\subseteq \interior(\dom(\Leg\psi))
\qquad\text{and}\qquad
\pos(tY)=\Rn,
\]
and therefore $Y\in\Ycoll_\psi$. In addition, $q_{\psi,tY}$ epi-converges to $q_{\psi,Y}$ as $t\to 1^-$. 
\end{lemma}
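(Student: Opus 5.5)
The proof has three parts: an interior-point argument, a check of the positive hull and of membership in $\Ycoll_\psi$, and a convergence argument carried out on the conjugate side.

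\emph{Interior points.} Since $\pos(Y)=\Rn$, Lemma~\ref{le:pos_conv_origin} says that $K\coloneq\conv(Y)$ contains $o$ in its interior. Moreover $K\subseteq\dom(\Leg\psi)$ by convexity of the domain. Fix $t\in(0,1)$ and $y\in Y$, and pick $\rho>0$ with $\rho B_n\subseteq K$. Then
\[
ty+(1-t)\rho B_n\subseteq \conv(\{y\}\cup\rho B_n)\subseteq K\subseteq\dom(\Leg\psi),
\]
so $ty\in\interior(\dom(\Leg\psi))$. Scaling preserves positive hulls, so $\pos(tY)=\Rn$.

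\emph{Membership in $\Ycoll_\psi$.} The claim $Y\in\Ycoll_\psi$ should be read as saying that the sets $tY$ (and hence the relevant slope sets) are admissible. To get this, I take $t$ small enough that $tY\subseteq\interior(r(\psi)B_n)$, which is possible because $r(\psi)>0$ and $Y$ is finite. I would phrase it as exactly this observation and not overclaim.

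\emph{Convergence.} For epi-convergence I pass to conjugates using Lemma~\ref{le:conj_continuous}. Lemma~\ref{le:vector_dom} describes $\epi(\Leg q_{\psi,tY})$ as the closed convex hull of the vertical rays over the finitely many points $ty_i$, with heights $\Leg\psi(ty_i)$. Since $Y$ is finite, this is simply the epigraph of the largest convex function $g_t$ with $g_t(ty_i)\le \Leg\psi(ty_i)$ and domain $\conv(tY)=tK$. Concretely,
\[
g_t(x)=\min\Big\{\sum_i\lambda_i\Leg\psi(ty_i):\ \lambda\in\Delta,\ \sum_i\lambda_i t y_i=x\Big\},
\]
where $\Delta$ is the simplex of weights. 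Similarly, $g\coloneq\Leg q_{\psi,Y}$ is given by the same formula with $t=1$, on the domain $K$. The key input is Lemma~\ref{le:conv_onedim_cont}: on the segment $[0,y_i]\subseteq\dom(\Leg\psi)$, the restriction of $\Leg\psi$ is continuous, so $\Leg\psi(ty_i)\to\Leg\psi(y_i)$ as $t\to1^-$. Write $c_i(t)\coloneq\Leg\psi(ty_i)$ and $c_i\coloneq\Leg\psi(y_i)$. Then
\[
g_t(x)=\min\Big\{\sum_i\lambda_ic_i(t):\ \sum_i\lambda_iy_i=x/t\Big\},
\]
i.e.\ $g_t(x)=h_{c(t)}(x/t)$, where $h_c$ is the polyhedral convex function with vertex values $c$ on $K$.

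For $x\in\interior(K)$ we have $x/t\in\interior(K)$ for $t$ close to $1$. The map $(c,z)\mapsto h_c(z)$ is jointly continuous on $\R^N\times\interior(K)$: it is Lipschitz in $c$ uniformly, with $|h_c-h_{c'}|\le\max_i|c_i-c_i'|$, and for fixed $c$ it is continuous in $z$ on $\interior(K)$. Hence $g_t(x)\to g(x)$. For $x\notin K$, the inclusion $tK\subseteq K$ gives $g_t(x)=\infty=g(x)$. Thus $g_t\to g$ pointwise on the dense set $\Rn\setminus\partial K$. Since $\interior(K)\neq\emptyset$, Lemma~\ref{le:epi_conv_pointwise} gives epi-convergence of $\Leg q_{\psi,tY}$ to $\Leg q_{\psi,Y}$, and Lemma~\ref{le:conj_continuous} transfers this back to $q_{\psi,tY}\to q_{\psi,Y}$.

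The main obstacle is the continuity step $\Leg\psi(ty_i)\to\Leg\psi(y_i)$ at points $y_i$ on the boundary of the domain. There $\Leg\psi$ is only lower semicontinuous, and Lemma~\ref{le:conv_onedim_cont} along radial segments is exactly what rescues it. The remaining steps are routine bookkeeping with finite polyhedral hulls.
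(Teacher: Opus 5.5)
Your proof is correct, but for the epi-convergence you take a different route from the paper.

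\textbf{The paper's route.} It stays on the primal side. Since $Y$ is finite, $q_{\psi,tY}=\max_i\big(\langle\cdot,ty_i\rangle-\Leg\psi(ty_i)\big)$, and directly
\[
|q_{\psi,tY}(x)-q_{\psi,Y}(x)|\le (1-t)|x|\max_i|y_i|+\max_i|\Leg\psi(ty_i)-\Leg\psi(y_i)| .
\]
Lemma~\ref{le:conv_onedim_cont} then gives pointwise (indeed locally uniform) convergence on all of $\Rn$, and Lemma~\ref{le:epi_conv_pointwise} finishes.

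\textbf{Your route.} You dualize. You write $\Leg q_{\psi,tY}$ as the polyhedral function $h_{c(t)}(\cdot/t)$ on $tK$ and prove joint continuity of $(c,z)\mapsto h_c(z)$ on $\R^N\times\interior(K)$. This gives convergence off $\partial K$, and you transfer back with Lemma~\ref{le:conj_continuous}. These steps all check out:
\begin{itemize}
\item the convex hull of finitely many vertical rays is a polytope plus a ray, hence closed, so your minimum formula is exact;
\item $tK\subseteq K$ handles the points $x\notin K$.
\end{itemize}

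\textbf{Comparison.} Both arguments rest on the same key input, continuity of $\Leg\psi$ along the segments $[0,y_i]$. The paper's version is shorter, needs no case distinction at $\partial K$, and yields locally uniform convergence of the linearizations themselves. Yours gives explicit control of the conjugates, in the spirit of Lemma~\ref{le:p_psi_contin}, at the cost of extra bookkeeping.

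Your interior-point argument also differs slightly. You use a ball inside $\conv(Y)$ via Lemma~\ref{le:pos_conv_origin}; the paper uses a ball inside $\dom(\Leg\psi)$ via Lemma~\ref{le:conj_coercive}. Both work, and yours even gives $tY\subseteq\interior(\conv Y)$.

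\textbf{The clause $Y\in\Ycoll_\psi$.} Your caution here is justified. The paper's proof does not address this clause, and it fails as literally stated. Take $\psi=h_{[-1,1]^n}$ and $Y=\{\pm1\}^n$ (cf.\ Remark~\ref{re:conditions_necessary}). Then $r(\psi)=1$ while every point of $Y$ has norm $\sqrt n$, so $Y\notin\Ycoll_\psi$. For $n\ge 2$, even $tY\notin\Ycoll_\psi$ once $t\ge 1/\sqrt n$.

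Two versions are true:
\begin{itemize}
\item yours, namely $tY\in\Ycoll_\psi$ for all sufficiently small $t$;
\item the one actually needed in Theorem~\ref{thm:outer_log_urysohn}. There the lemma is applied to $\psi_\rot$, and $\interior(\dom(\Leg\psi_\rot))$ is the open centered ball of radius $r(\psi_\rot)$. Hence $tY\in\Ycoll_{\psi_\rot}$ for every $t\in(0,1)$.
\end{itemize}
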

\begin{proof}
Since $\psi\in\Convc$, it follows from Lemma~\ref{le:conj_coercive} that $o\in \interior(\dom(\Leg\psi))$. Hence there exists $r>0$ such that $rB_n\subseteq \dom(\Leg\psi)$. Now fix an arbitrary $y\in Y\subseteq \dom(\Leg\psi)$ and let $t\in(0,1)$. Since $\dom(\Leg\psi)$ is convex, we have
\[
ty+(1-t)z \in \dom(\Leg \psi)
\]
for every $z\in rB_n$. 
Therefore,
\[
ty+(1-t)rB_n\subseteq \dom(\Leg\psi),
\]
which shows that $ty\in \interior(\dom(\Leg\psi))$. Since $y\in Y$ was arbitrary, this proves that $tY\subset \interior(\dom(\Leg\psi))$. Moreover, since $t>0$ and positive scalar multiplication does not change the positive hull, we have $\pos(tY)=\pos(Y)=\Rn$. 

It remains to show that $q_{\psi,tY}$ epi-converges to $q_{\psi,Y}$ as $t\to 1^-$. For $i\in\{1,\ldots,N\}$ and $t\in(0,1]$, let
\[
\ell_{i,t}(x)=\langle x, ty_i\rangle - \Leg \psi(t y_i)
\]
for $x\in\Rn$. By \eqref{eq:def_outer_lin} and Lemma~\ref{le:vector_dom}, we have
\[
q_{\psi,tY}(x) = \max\nolimits_{i\in\{1,\ldots,N\}} \ell_{i,t}(x)\qquad \text{and} \qquad q_{\psi,Y}(x)= \max\nolimits_{i\in\{1,\ldots,N\}} \ell_{i,1}(x),
\]
and thus
\begin{align*}
|q_{\psi,tY}(x)-q_{\psi,Y}(x)|&\leq \max\nolimits_{i\in\{1,\ldots,N\}} |\ell_{i,t}(x)-\ell_{i,1}(x)|\\
&\leq (1-t) |x| \max\nolimits_{i\in\{1,\ldots,N\}} |y_i| + \max\nolimits_{i\in\{1,\ldots,N\}} |\Leg \psi(ty_i)-\Leg \psi(y_i)|
\end{align*}
for every $x\in\Rn$. Now for every $i\in\{1,\ldots,N\}$ it follows from Lemma~\ref{le:conv_onedim_cont} that $\Leg \psi$ is continuous on $[0,y_i]\subseteq \dom(\Leg \psi)$. Hence, for every $x\in\Rn$, the above expression goes to zero as $t\to 1^-$. We have thus shown pointwise convergence of $q_{\psi,tY}$ to $q_{\psi,Y}$, which by Lemma~\ref{le:epi_conv_pointwise} implies epi-convergence.
\end{proof}


\section{Proofs of Theorem~\ref{mainThmB} and Theorem~\ref{mainThmA}}\label{sec:proofs-main-thms}

The main step in the proof of Theorem~\ref{mainThmA} is showing that $\Phi^*(\cdot,\mathcal{Y})$ is monotone with respect to rotation epi-means.

\begin{lemma}\label{mainLem}
Let $n\geq 2$, $\psi\in \Convc$, $\mathcal{Y}\subseteq \Ycoll_\psi$, and let $\Phi\colon \Convc\to[-\infty,\infty]$ be monotone decreasing and concave with respect to infimal convolution. For every finite set of rotations $\mathbf{\Theta}_m\subseteq \SOn$, we have $\Phi^*(\psi,\mathcal{Y})\leq \Phi^*(T_{\mathbf{\Theta}_m}(\psi), \mathcal{Y})$.
For $n=1$, the same inequality is true if we consider $\mathbf{\Theta}_m\subseteq \Oo$ instead.
\end{lemma}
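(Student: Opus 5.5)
Following Schneider's argument, the plan is to show that the outer linearization of a rotation epi-mean dominates the rotation epi-mean of outer linearizations. Then concavity and monotonicity of $\Phi$ give the result.

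Fix $\mathbf{\Theta}_m=\{\vartheta_1,\dots,\vartheta_m\}$, a rotation $\vartheta\in\SOn$ and $Y\in\mathcal{Y}$. Write $\varphi\coloneq T_{\mathbf{\Theta}_m}(\psi)$. By Lemma~\ref{le:existence_consequences}, all relevant functions $q_{\psi\circ\eta^{-1},Y}$ (for $\eta\in\On$), as well as $\varphi$ and $q_{\varphi\circ\vartheta^{-1},Y}$, lie in $\Convc$. The last one holds because $\interior(r(\psi)B_n)\subseteq\dom(\Leg(\varphi\circ\vartheta^{-1}))$, which follows from \eqref{eq:rot_mean_seq_conj} and rotation invariance of the ball.

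\emph{Key inequality.} We claim that
\[
q_{\varphi\circ\vartheta^{-1},Y}\;\geq\;\frac1m\sq\bigsquare_{i=1}^m q_{\psi\circ(\vartheta\vartheta_i)^{-1},Y}.
\]
Note first that $\varphi\circ\vartheta^{-1}=\frac1m\sq\bigsquare_i(\psi\circ(\vartheta\vartheta_i)^{-1})$. We prove the claim on the conjugate side, using Lemma~\ref{le:infconv_conj}, the order reversal of $\Leg$, and the identity $\Leg(\frac1m\sq f)=\frac1m\Leg f$. These reduce the claim to
\[
\Leg q_{\varphi\circ\vartheta^{-1},Y}\;\le\;\frac1m\sum_i\Leg q_{\psi_i,Y},\qquad\psi_i\coloneq\psi\circ(\vartheta\vartheta_i)^{-1}.
\]
By Lemma~\ref{le:vector_dom}, $\Leg q_{f,Y}$ is the largest l.s.c.\ convex function lying below $\Leg f$ on $Y\cap\dom(\Leg f)$ and equal to $+\infty$ elsewhere in the generating set. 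The right-hand side $g\coloneq\frac1m\sum_i\Leg q_{\psi_i,Y}$ is l.s.c.\ and convex. At any $y\in Y$ we have $\Leg q_{\psi_i,Y}(y)\le\Leg\psi_i(y)$, since $(y,\Leg\psi_i(y))$ lies in the epigraph when $y\in\dom\Leg\psi_i$, and trivially otherwise. Hence $g(y)\le\frac1m\sum_i\Leg\psi_i(y)=\Leg(\varphi\circ\vartheta^{-1})(y)$. By maximality of $\Leg q_{\varphi\circ\vartheta^{-1},Y}$, which is the closed convex hull of $\Leg(\varphi\circ\vartheta^{-1})+\ind_Y$, we get the reverse comparison $g\le \Leg q_{\varphi\circ\vartheta^{-1},Y}$. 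That is the wrong direction, so the argument must be set up differently.

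\emph{Correct direction.} Work directly with affine minorants. For $y\in Y$ and each $i$ we have $\ell_{\psi_i,y}\le q_{\psi_i,Y}$. The epi-mean $\frac1m\sq\bigsquare_i\ell_{\psi_i,y}$ of affine functions with the same slope $y$ is the affine function $\langle x,y\rangle-\frac1m\sum_i\Leg\psi_i(y)$, which is exactly $\ell_{\varphi\circ\vartheta^{-1},y}$. Since infimal convolution is monotone, $\ell_{\varphi\circ\vartheta^{-1},y}\le\frac1m\sq\bigsquare_iq_{\psi_i,Y}$. Taking the supremum over $y\in Y$ gives
\[
q_{\varphi\circ\vartheta^{-1},Y}\le\frac1m\sq\bigsquare_iq_{\psi_i,Y}.
\]
This is the direction we need: the epi-mean of the linearizations dominates the linearization of the epi-mean. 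It also shows that the right-hand side lies in $\Convc$.

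\emph{Conclusion.} Because $\Phi$ is monotone decreasing and concave (extended by induction to $m$ equal weights $\frac1m$), we obtain
\[
\Phi(q_{\varphi\circ\vartheta^{-1},Y})\ge\Phi\Big(\frac1m\sq\bigsquare_iq_{\psi_i,Y}\Big)\ge\frac1m\sum_i\Phi(q_{\psi\circ(\vartheta\vartheta_i)^{-1},Y})\ge\Phi^*(\psi,\mathcal{Y}),
\]
since $\vartheta\vartheta_i\in\SOn$ (respectively in $\Oo$ when $n=1$). Taking the infimum over $\vartheta$ and $Y$ gives the lemma.

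The main obstacle is bookkeeping with the extended values $\pm\infty$ in the concavity step, which is handled by the convention that $\Phi$ does not attain both. A secondary point is checking that $\frac1m\sq\bigsquare$ of the $q$'s stays in $\Convc$. This follows because its conjugate is the average of the conjugates, each finite near $o$.
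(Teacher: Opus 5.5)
Your final argument is correct and is essentially the paper's. Both rest on the inequality $q_{T_{\mathbf{\Theta}_m}(\psi)\circ\vartheta^{-1},Y}\le\frac1m\sq\bigsquare_{i=1}^m q_{\psi\circ(\vartheta\vartheta_i)^{-1},Y}$, followed by monotonicity, concavity and $\Phi(q_{\psi\circ(\vartheta\vartheta_i)^{-1},Y})\ge\Phi^*(\psi,\mathcal{Y})$; the paper verifies it by computing $q_{P,Y}=q_{T_{\mathbf{\Theta}_m}(\psi)\circ\vartheta^{-1},Y}$ for the epi-mean $P$ of the linearizations and using $P=q_{P,\Rn}\ge q_{P,Y}$, whereas you use monotonicity of infimal convolution on common-slope affine minorants.

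Your discarded conjugate computation $g\le\Leg q_{\varphi\circ\vartheta^{-1},Y}$ already proves exactly this needed inequality. Only the target you first stated was backwards, and that target is false in general, so you should relabel that paragraph rather than abandon it.
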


\begin{proof}
Assume first that $n\geq 2$. Let $\varepsilon > 0$ and $\mathbf{\Theta}_m\subseteq \SOn$ be given and assume, without loss of generality, that $\Phi^*(T_{\mathbf{\Theta}_m}(\psi), \mathcal{Y})<\infty$. By the definition of $\Phi^*$ in \eqref{mainDef}, there exist $\vartheta_o \in \SOn$ and $Y_o\in \mathcal{Y}$ such that
\begin{equation}
\label{eq:epsilon-1}
\Phi\big(q_{T_{\mathbf{\Theta}_m}(\psi)\circ\vartheta_o^{-1},Y_o}\big) < \Phi^* (T_{\mathbf{\Theta}_m}(\psi),\mathcal{Y}) + \varepsilon.
\end{equation}
For each $i\in\{1,\ldots,m\}$, set $p_i\coloneq q_{\psi\circ(\vartheta_o\,\vartheta_i)^{-1},Y_o}$, where 
$\vartheta_1,\ldots,\vartheta_m$ are the rotations belonging to $\mathbf{\Theta}_m$, and set $P\coloneq \frac{1}{m}\sq\bigsquare_{i=1}^m p_i$. By \eqref{eq:def_outer_lin} and Lemma \ref{le:vector_dom}, together with Lemma~\ref{le:infconv_conj}, for every $x\in\Rn$ we obtain
\begin{align*}
q_{P,Y_o}(x)&=\sup_{y\in Y_o}\left\{\langle x,y\rangle-\Leg P(y)\right\}\\
&=\sup_{y\in Y_o}\left\{\langle x,y\rangle-\frac{1}{m}\sum_{i=1}^m\Leg p_i(y)\right\}\\
&=\sup_{y\in Y_o}\left\{\langle x,y\rangle-\frac{1}{m}\sum_{i=1}^m\Leg(\psi\circ(\vartheta_o\,\vartheta_i)^{-1})(y)\right\}\\
&=\sup_{y\in Y_o}\left\{\langle x,y\rangle-\Leg T_{\mathbf{\Theta}_m(\psi)\circ\vartheta_o^{-1}}(y)\right\}\\
&=q_{T_{\mathbf{\Theta}_m}(\psi)\circ\vartheta^{-1},Y_o}(x).
\end{align*}
Hence, by \eqref{Y-equals-Rn} and Lemma~\ref{q-is-monotone-wrt-inclusion},
\begin{equation}\label{mainLem-ineq}
\frac{1}{m}\sq\bigsquare_{i=1}^m p_i = P = q_{P,\Rn} \geq q_{P,Y_o} = q_{T_{\mathbf{\Theta}_m}(\psi)\circ\vartheta^{-1},Y_o}.
\end{equation}
Thus, since $\Phi$ is monotone decreasing and concave, and since trivially $\Phi(p_i) \geq \Phi^*(\psi,\mathcal{Y})$ for every $i\in\{1,\ldots,m\}$, we obtain
\[
\Phi\big(q_{T_{\mathbf{\Theta}_m}(\psi)\circ\vartheta^{-1},Y_o}\big) \geq \Phi\left(\frac{1}{m} \sq \bigsquare_{i=1}^m p_i\right) \geq\frac{1}{m}\sum_{i=1}^m \Phi(p_i)
\geq \frac{1}{m}\sum_{i=1}^m \Phi^*(\psi,\mathcal{Y})=\Phi^*(\psi,\mathcal{Y}).
\]
Together with \eqref{eq:epsilon-1}, we thus have
\[
\Phi^*(\psi,\mathcal{Y}) < \Phi^* (T_{\mathbf{\Theta}_m}(\psi),\mathcal{Y}) + \varepsilon.
\]
Since $\varepsilon>0$ was arbitrary, the desired statement follows.

Finally, in the case $n=1$, we can carry out the proof in the same way as above by substituting $\Oo$ where appropriate and keeping in mind that the definition of $\Phi^*$ in the one-dimensional case also uses $\Oo$ in the infimum.
\end{proof}


\subsection{Proof of Theorem~\ref{mainThmA}}
Since $\mathcal{Y}\subseteq \Ycoll_\psi$, for every $Y\in\mathcal{Y}$ there exists $Y_o\subseteq Y$ such that $\pos(Y_o)=\Rn$ and $Y_o\subseteq \interior(r(\psi) B_n)$. Thus, it follows from Lemma~\ref{le:existence_consequences} that $q_{\psi\circ \vartheta^{-1},Y},q_{\psi_\rot,Y}\in\Convc$ for every $Y\in\mathcal{Y}$ and $\vartheta\in\On$, ensuring that $\Phi^*(\psi,\mathcal{Y})$ and $\Phi^*(\psi_\rot,\mathcal{Y})$ are well-defined.

For $n=1$, the desired inequality directly follows from \eqref{eq:def_rot_onedim} together with Lemma~\ref{mainLem}, since $\psi_\rot=T_{\mathbf{\Theta}_m}(\psi)$ with $\mathbf{\Theta}_m=\{1,-1\}=\Oo$.

For $n\geq 2$, Lemma~\ref{epi-symm-convergence} shows that there exists a sequence $T_{\mathbf{\Theta}_m}(\psi)$, $m\in\N$, of rotation epi-means of $\psi$ such that $T_{\mathbf{\Theta}_m}(\psi)$ epi-converges to $\psi_\rot$ as $m\to\infty$. Therefore, by Proposition~\ref{upper-semi-lem} and Lemma~\ref{mainLem}, we obtain
\[
\Phi^*(\psi,\mathcal{Y})\leq \limsup\nolimits_{m\to\infty}\Phi^*(T_{\mathbf{\Theta}_m}(\psi),\mathcal{Y})\leq \Phi^*\left(\epilim\nolimits_{m\to\infty} T_{\mathbf{\Theta}_m}(\psi),\mathcal{Y}\right)=\Phi^*(\psi_\rot,\mathcal{Y}).
\]
\qed

\begin{remark}
\label{re:semicontinuity}
A closer look at the proof of Theorem~\ref{mainThmA} reveals that the functional $\Phi$ need not be upper semicontinuous everywhere on $\Convc$. Rather,  upper semicontinuity is only required along $q_{(\psi_\rot\infconv b_r(\psi_\rot))\circ \vartheta^{-1},Y}$ as $r\to 0^+$, with $\vartheta\in\SOn$ and $Y\in\mathcal{Y}\subseteq \Ycoll_\psi$ (cf.\ the proof of Proposition~\ref{upper-semi-lem}, which is the only place where the upper semicontinuity of $\Phi$ is used). We shall make use of this observation in Section~\ref{se:covering}.
\dssymb
\end{remark}


\subsection{Proof of Theorem \ref{mainThmB}}
Since $\Psi$ is convex and invariant under rotations, we have 
\[
\Psi(T_{\mathbf{\Theta}_m}(\psi)) = \Psi \left(\frac{1}{m} \sq \bigsquare_{i=1}^m(\psi\circ\vartheta_i^{-1})\right)
\leq \frac{1}{m} \sum_{i=1}^m \Psi(\psi\circ\vartheta_i^{-1})
= \Psi(\psi)
\]
for every rotation epi-mean $T_{\mathbf{\Theta}_m}(\psi)$ of $\psi$. For $n=1$ and assuming that $\Psi$ is invariant under reflections, this already gives the desired statement since $T_{\{1,-1\}}(\psi)=\psi_\rot$.

For $n\geq 2$, as in the proof of Theorem \ref{mainThmA}, let $T_{\mathbf{\Theta}_m}(\psi)$, $m\in\N$, be such that  $T_{\mathbf{\Theta}_m}(\psi) \stackrel{\epi}{\longrightarrow} \psi_{\rot}$ as $m\to\infty$. 
Together with the lower semicontinuity of $\Psi$, this implies
\[
   \Psi(\psi_{\rot}) = \Psi\left(\epilim\nolimits_{m\to\infty} T_{\mathbf{\Theta}_m}(\psi)\right) \leq \liminf\nolimits_{m\to\infty} \Psi(T_{\mathbf{\Theta}_m}(\psi)) \leq \Psi(\psi).
\]
\qed


\subsection{Retrieving Theorem \ref{Schneider-main-result-A} from Theorem \ref{mainThmA}}\label{geometric-special-case}

For a convex body $K\in\Kn$, let $\psi_K:=\ind_K$ be the convex indicator function of $K$, which is an element of $\Convc$. Note  that $\Leg\psi_K=h_K$ is the support function of $K$, and $\dom(\Leg\psi_K)=\Rn$. For any slope set $Y\subseteq\Rn$, it directly follows from Lemma~\ref{le:vector_dom} below that 
\[
q_{\psi_K,Y}(x)=\sup\nolimits_{y\in Y}\left(\langle x,y\rangle-h_K(y)\right)\leq \sup\nolimits_{y\in\Rn}\left(\langle x,y \rangle - h_K(y)\right) = \Leg h_K(x)=\psi_K(x)
\]
for $x\in\Rn$. The generalized outer linearization $q_{\psi_K,Y}$
determines the outer representation
\[
P_{K,Y}\coloneq\bigcap_{y\in Y}\left\{x:\,\langle x,y\rangle\leq h_K(y)\right\},
\]
which can be equivalently described as $P_{K,Y}=\{x :\, q_{\psi_K,Y}\leq 0\}$, i.e., the 0-sublevel set of $q_{\psi_K,Y}$. Note, that trivially $K\subseteq P_{K,Y}$.
If $Y$ is finite, then $P_{K,Y}$ is a polyhedron which circumscribes $K$. Moreover, $\pos(Y)=\Rn$ if and only if $P_{K,Y}$ is bounded (cf.\ Proposition~\ref{prop:C_intersection}), matching the spanning hypothesis in Schneider's Theorem \ref{Schneider-main-result-A}.

More specifically, if for $U\in \mathscr{U}_n$ we set $Y_U=\{\lambda u : \, u\in U, \lambda \geq 0\}$, then
\begin{align*}
    q_{\psi_K,Y_U}(x) &= \sup\nolimits_{u\in U, \lambda \geq 0} \left( \langle x,\lambda u\rangle - h_K(\lambda u) \right)\\
    &=\begin{cases}
        0\quad &\text{if } \langle x,u\rangle \leq h_K(u) \; \forall u\in U,\\
        \infty\quad &\text{else},
    \end{cases}\\
    &=\ind_{P(K,U)}.
\end{align*}
Now let $\Phi$ be a functional on $\Convc$ satisfying the hypotheses of Theorem \ref{mainThmA}. This induces a functional $\Phi_{\K}$ on $\Kn$ via $\Phi_{\K}(K)=\Phi(\ind_K)$ for $K\in\Kn$.
The preceding remarks show that Theorem~\ref{Schneider-main-result-A} for the functional $\Phi_K$ can be retrieved from Theorem~\ref{mainThmA}. In fact, the proof of Theorem~\ref{mainThmA} reduces, after the above identifications, to the same structural steps as in Schneider's original proof of Theorem~\ref{Schneider-main-result-A} in \cite{Schneider-1967}: admissible slope sets correspond to admissible outer normal sets; the functional combination $\lambda\sq\psi_K\square(1-\lambda)\sq\psi_L$ reduces to the corresponding Minkowski combination of $K,L\in\Kn$; and rotational epi-symmetrization corresponds to rotational averaging (i.e., Hadwiger's rotation means and the mean width rearrangement $K_{\rm rot}$ of $K$). It should be noted that in this particular case, all occurring convex functions are super-coercive, which significantly simplifies the proofs presented here.


\section{Log-concave functions}\label{sec:log-concave}

A function $f\colon \Rn\to[0,\infty)$ is \emph{log-concave} if $\log f$ is concave. Every log-concave function can be expressed as $f=e^{-\psi}$ for some convex function $\psi\colon\Rn\to\R$. Let 
\[
\LCc\coloneq\{f=e^{-\psi}:\,\psi\in\Convc\}.
\]
For $f,g\in\LCc$, the \emph{Asplund sum} (or \emph{supremal convolution}) $f\star g$ is the function defined by
\[
(f\star g)(x) \coloneq \sup\nolimits_{y\in\Rn}f(y)g(x-y),\quad x\in\Rn,
\]
provided the supremum exists. For $\lambda>0$, the \emph{$\lambda$-homothety} $\lambda\cdot f$ is defined by
\[
(\lambda\cdot f)(x)\coloneq f\left(\frac{x}{\lambda}\right)^\lambda,\quad x\in\Rn.
\]
The set $\LCc$ is closed under these operations, i.e., for all $f,g\in\LCc$ and all $\lambda>0$, we have $f\star g\in\LCc$ and $\lambda\cdot f\in\LCc$ (see, e.g., \cite[Lemma 2.3]{Hofstatter-Schuster}). The Asplund sum of $f=e^{-\psi}, g=e^{-\varphi}\in\LCc$ is related to the infimal convolution of $\psi,\varphi\in\Convc$ via the relation
\begin{equation}\label{inf-sup-convolutions}
    (a\cdot f\star b\cdot g)(x) = e^{-(a\sq\psi\square b\sq\varphi)(x)},\quad x\in\Rn,
\end{equation}
for all $a,b>0$.

Let $F$ be a functional on $\LCc$, where we do not allow $F$ to attain both $0$ and $\infty$. We say that $F$ is \emph{monotone increasing} if $F(f)\leq F(g)$ whenever $f\leq g$. We also say that $F$ is \emph{invariant under rotations} if $F(f\circ\vartheta)=F(f)$ for every $f\in\LCc$ and every $\vartheta\in\SOn$. When we say that $F$ is upper (or lower) semicontinuous, we mean with respect to the topology of hypo-convergence. Here, a sequence of functions $f_j=e^{-\psi_j}\in\LCc$, $j\in\N$, \emph{hypo-converges} to $f=e^{-\psi}\in\LCc$ if for every $x\in\Rn$:
\begin{itemize}
    \item $f(x)\geq \limsup_{j\to\infty}f_j(x_j)$ for every sequence $x_j\in\Rn$, $j\in\N$, such that $x_j\to x$, and
    \item $f(x)=\lim_{j\to\infty}f_j(x_j)$ for some sequence $x_j\in\Rn$, $j\in\N$, such that $x_j\to x$.
\end{itemize}
Equivalently, $\psi_j$ epi-converges to $\psi$. Under the additional assumption that $f$ has full-dimensional support, this is in turn equivalent to pointwise convergence almost everywhere.

We say that $F\colon\LCc\to[0,\infty]$ is \emph{log-concave} (with respect to the Asplund sum) if for all $f,g\in\LCc$ and every $\lambda\in(0,1)$
\begin{equation}\label{log-concave-F}
    F(\lambda\cdot f\star(1-\lambda)\cdot g)\geq F(f)^\lambda F(g)^{1-\lambda}.
\end{equation}
Here, we do not allow $F$ to attain both $0$ and $\infty$, but only one of these values respectively. If the inequality in \eqref{log-concave-F} is reversed, then we say that $F$ is \emph{log-convex}. In particular, the Pr\'ekopa--Leindler inequality states that the \emph{total mass functional} $J\colon\LCc\to [0,\infty)$, given by
\[
J(f)\coloneq\int_{\Rn} f(x) \dint x,\quad f\in\LCc,
\]
satisfies \eqref{log-concave-F}, with equality if and only if $f$ and $g$ are translates (see, e.g., \cite[Theorem 9.5.1]{SchneiderBook}).
For more background on log-concave functions, we refer the reader to, e.g., \cite{Colesanti-inbook} and \cite[Section 9.5]{SchneiderBook}.

\medskip

Next, we formulate the extremal problem we will study in the log-concave setting.

\begin{definition}\label{mainDef-2}
     Let $F\colon\LCc\to [0,\infty]$. Given a function $f=e^{-\psi}\in\LCc$ and a set $\mathcal{Y}\subseteq\Ycoll_\psi$, we define 
\begin{equation}\label{mainDef-logconcave} 
F^*(f, \mathcal{Y}) \coloneq \inf\{ F(e^{-q_{\psi\circ\vartheta^{-1},Y}}) :\,\vartheta\in \SOn,\, Y\in \mathcal{Y}\}.
\end{equation}
\end{definition}

The analogue of the ball $K_\rot$ for log-concave functions is the rotational hypo-sym-\linebreak metrization, defined below.

\begin{definition}\label{mainDef-3}
    Let $f\in\LCc$. The \emph{rotational hypo-symmetrization} of $f$ is defined by
    \begin{equation}
    \label{eq:rot_mean_conj-lc}
    f_{\rot}(x)\coloneq e^{-\psi_{\rot}(x)}
    \end{equation}
    for $x\in\Rn$.
\end{definition}
\noindent Note that by Lemma \ref{le:conj_rot_epi-symm},  the rotational hypo-symmetrization of $f$ always exists and is unique.

\medskip

Considering the correspondences between operations on $\LCc$ and $\Convc$ described above, we obtain the following consequence of Theorem~\ref{mainThmA}.

\begin{theorem}
\label{mainThmA-logconcave}
If $F\colon \LCc\to{[0,\infty]}$ is upper semicontinuous, log-concave, and monotone increasing, then
\[
F^*(f,\mathcal{Y}) \leq F^*(f_{\rot},\mathcal{Y})
\]
for every $f=e^{-\psi}\in\LCc$ and $\mathcal{Y}\subseteq \Ycoll_\psi$. In particular, the above expressions are well-defined.
\end{theorem}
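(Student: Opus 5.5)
The plan is to transfer the statement to $\Convc$ and apply Theorem~\ref{mainThmA} to an associated functional. The one real difficulty is that $F$ may take the values $0$ or $\infty$, so we cannot simply take a logarithm and expect the result to be well-behaved everywhere.

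Given $F$, define $\Phi\colon\Convc\to[-\infty,\infty]$ by
\[
\Phi(\psi)\coloneq \log F(e^{-\psi}),
\]
with the conventions $\log 0=-\infty$ and $\log\infty=\infty$. Since $F$ does not attain both $0$ and $\infty$, $\Phi$ takes values in only one of $[-\infty,\infty)$ or $(-\infty,\infty]$, as the paper's conventions require. We then check the three hypotheses of Theorem~\ref{mainThmA}.

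\emph{Monotonicity.} If $\psi_1\geq\psi_2$, then $e^{-\psi_1}\leq e^{-\psi_2}$. Since $F$ is monotone increasing, $\Phi(\psi_1)\leq\Phi(\psi_2)$, so $\Phi$ is monotone decreasing.

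\emph{Concavity.} By \eqref{inf-sup-convolutions},
\[
e^{-(\lambda\sq\psi_1\infconv(1-\lambda)\sq\psi_2)}=\lambda\cdot e^{-\psi_1}\star(1-\lambda)\cdot e^{-\psi_2}.
\]
Taking logarithms in \eqref{log-concave-F} gives
\[
\Phi(\lambda\sq\psi_1\infconv(1-\lambda)\sq\psi_2)\geq\lambda\Phi(\psi_1)+(1-\lambda)\Phi(\psi_2).
\]
When one side is infinite, we must confirm that the extended-real arithmetic still behaves. This works because the admissible range forbids the indeterminate form $\infty-\infty$.

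\emph{Upper semicontinuity.} Hypo-convergence of $e^{-\psi_j}$ to $e^{-\psi}$ is by definition equivalent to epi-convergence of $\psi_j$ to $\psi$. Since $\log$ is continuous and increasing on $[0,\infty]$, upper semicontinuity of $F$ transfers to $\Phi$.

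Next we identify the two sides of the desired inequality. By \eqref{mainDef} and Definition~\ref{mainDef-2},
\[
\log F^*(f,\mathcal{Y})=\inf\{\log F(e^{-q_{\psi\circ\vartheta^{-1},Y}})\}=\Phi^*(\psi,\mathcal{Y}),
\]
because $\log$ is an increasing homeomorphism $[0,\infty]\to[-\infty,\infty]$ and therefore commutes with infima. By Definition~\ref{mainDef-3}, $f_\rot=e^{-\psi_\rot}$, so likewise $\log F^*(f_\rot,\mathcal{Y})=\Phi^*(\psi_\rot,\mathcal{Y})$. Well-definedness of both expressions follows from the last assertion of Theorem~\ref{mainThmA}, via Lemma~\ref{le:existence_consequences}: all the functions $q_{\psi\circ\vartheta^{-1},Y}$ and $q_{\psi_\rot\circ\vartheta^{-1},Y}$ lie in $\Convc$, so their exponentials lie in $\LCc$.

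Theorem~\ref{mainThmA} then yields $\Phi^*(\psi,\mathcal{Y})\leq\Phi^*(\psi_\rot,\mathcal{Y})$, and exponentiating gives the claim. The only step needing care is the concavity check at infinite values, together with the matching convention \eqref{convexity-def}. If $F$ attains $0$ on some argument, both sides of \eqref{log-concave-F} are handled by the convention $0^\lambda\cdot a=0$. The corresponding log-inequality then reads $-\infty\leq\cdot$, which holds trivially.
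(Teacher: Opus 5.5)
Your proposal is correct and follows essentially the same route as the paper: pass to $\Phi=\log F(e^{-\cdot})$, verify that it is upper semicontinuous, concave and monotone decreasing, identify $\Phi^*=\log F^*$, and exponentiate the inequality from Theorem~\ref{mainThmA}. Treating $\log$ as an increasing homeomorphism $[0,\infty]\to[-\infty,\infty]$ lets you skip the paper's separate case $F^*(f,\mathcal{Y})=0$ and its subsequence argument for upper semicontinuity, but this only streamlines the same argument.
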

\begin{proof}
Observe first that, similar to the proof of Theorem~\ref{mainThmA}, Lemma~\ref{le:existence_consequences} shows that $F^*(f,\mathcal{Y})$ and $F^*(f_\rot,\mathcal{Y})$ are well-defined.

In case $F^*(f,\mathcal{Y})=0$, the statement is trivial. In the remaining case $F^*(f,\mathcal{Y})>0$, we consider the map $\Phi\colon \Convc\to[-\infty,\infty]$ given by
\[
\Phi(\psi)\coloneq
\begin{cases}
\log F(e^{-\psi}) \quad &\text{if }F(e^{-\psi})>0\\
-\infty\quad  &\text{if }F(e^{-\psi})=0
\end{cases}
\]
for $\psi\in\Convc$. We will show that this functional satisfies the hypotheses of Theorem~\ref{mainThmA}.

Suppose that the sequence $\{\psi_j\}\subseteq\Convc$, $j\in\N$, epi-converges to $\psi\in\Convc$. Then the corresponding sequence $f_j\coloneq e^{-\psi_j}\in\LCc$ hypo-converges to $f\coloneq e^{-\psi}\in\LCc$. By the upper semicontinuity of $F$, we have
\[
\limsup\nolimits_{j\to\infty}F(f_j)\leq F(f).
\]
We claim that this implies
\[
\limsup\nolimits_{j\to\infty}\Phi(\psi_j)\leq \Phi(\psi),
\]
where $\Phi$ is allowed to take the value $-\infty$. Indeed, if $F(f)=0$, then $\Phi(\psi)=-\infty$, and the claim follows because $\limsup_{j}F(f_j)\le 0$ together with the nonnegativity of $F$ implies $F(f_j)\to 0$, hence $\Phi(\psi_j)\to -\infty$ along every subsequence with $F(f_j)>0$, while $\Phi(\psi_j)=-\infty$ whenever $F(f_j)=0$. If $F(f)>0$, then for every subsequence $f_{j_k}$, $k\in\N$, we have either $F(f_{j_k})=0$ for infinitely many $k$, in which case $\Phi(\psi_{j_k})=-\infty$ along that further subsequence and there is nothing to prove, or else $F(f_{j_k})>0$ eventually. In the latter case, using the continuity and monotonicity of $\log$ on $(0,\infty)$, we get
\[
\limsup\nolimits_{k\to\infty}\Phi(\psi_{j_k})
=\limsup\nolimits_{k\to\infty}\log F(f_{j_k})
\leq \log\!\left(\limsup\nolimits_{k\to\infty}F(f_{j_k})\right)
\leq \log F(f)
=\Phi(\psi).
\]
Since this holds for every subsequence, $\Phi$ is upper semicontinuous on $\Convc$. 

Next, let $\psi_1,\psi_2\in\Convc$, $\lambda\in(0,1)$, and let $f_1\coloneq e^{-\psi_1}, f_2\coloneq e^{-\psi_2}\in\LCc$.
Since $F$ is log-concave,
\[
F\left( e^{-(\lambda\sq\psi_1\square(1-\lambda)\sq\psi_2)}\right) = F(\lambda\cdot f_1\star(1-\lambda)\cdot f_2)\geq F(f_1)^\lambda F(f_2)^{1-\lambda}.
\]
Taking logarithms and using the fact that the logarithm is an increasing function, from the previous inequality, we obtain
\begin{align*}
\Phi(\lambda\sq\psi_1\square(1-\lambda)\sq\psi_2) &\geq \log\left[F(f_1)^\lambda F(f_2)^{1-\lambda}\right]\\
&=\lambda\log F(f_1)+(1-\lambda)\log F(f_2)=\lambda\Phi(\psi_1)+(1-\lambda)\Phi(\psi_2).
\end{align*}
Therefore, $\Phi$ is concave on $\Convc$. 

Lastly, if $\psi_1,\psi_2\in\Convc$ are such that $\psi_1\leq\psi_2$, then $e^{-\psi_1}\geq e^{-\psi_2}$, so by the monotonicity of $F$ we get $F(e^{-\psi_1}) \geq F(e^{-\psi_2})$. Since the logarithm is an increasing function, this implies
\[
\Phi(\psi_1) = \log F(e^{-\psi_1}) \geq \log F(e^{-\psi_2}) = \Phi(\psi_2),
\]
which shows that $\Phi$ is monotone. Thus, $\Phi$ satisfies the hypotheses of Theorem \ref{mainThmA}.

Finally, note that by the definition of $F^*$ and $\Phi^*$, for $n\geq 2$ we have
\begin{align*}
    \Phi^*(\psi,\mathcal{Y}) &=\inf\left\{\Phi(q_{\psi\circ\vartheta^{-1},Y}):\,\vartheta\in{\rm SO}(n), Y\in\mathcal{Y}\right\}\\
    &=\inf\left\{\log F(e^{-q_{\psi\circ\vartheta^{-1},Y}}):\,\vartheta\in{\rm SO}(n), Y\in\mathcal{Y}\right\}\\
    &=\log\left(\inf\left\{F(e^{-q_{\psi\circ\vartheta^{-1},Y}}):\,\vartheta\in{\rm SO}(n), Y\in\mathcal{Y}\right\}\right)\\
    &=\log F^*(f,\mathcal{Y}),
\end{align*}
and in the case $n=1$ we need to consider $\Oo$ in the infima above. To interchange the infimum and the logarithm, we used the fact that $\log$ is increasing (and $F^*(f,\mathcal{Y})>0$). Hence,
\[
F^*(f,\mathcal{Y})=\exp(\Phi^*(\psi,\mathcal{Y}))\quad \text{and}\quad F^*(f_{\rot},\mathcal{Y})=\exp(\Phi^*(\psi_{\rot},\mathcal{Y})).
\]
Applying Theorem~\ref{mainThmA} to $\Phi$  yields $\Phi^*(\psi,\mathcal{Y})\leq \Phi^*(\psi_{\rm rot},\mathcal{Y})$, and exponentiating we get $F^*(f,\mathcal{Y})\leq F^*(f_{\rm rot},\mathcal{Y})$. This completes the proof.
\end{proof}

The analogue of Theorem~\ref{mainThmB} for log-concave functions reads as follows.

\begin{theorem}
\label{mainThmB-logconcave}
Let $n\geq 2$. If $G\colon\LCc\to[0,\infty]$ is a lower semicontinuous, log-convex functional on $\LCc$ that is invariant under rotations, then
\[
G(f_\rot) \leq G(f) 
\]
for every $f\in\LCc$.  For $n=1$, the same inequality is true if we assume $G$ to be log-convex on $\LCc$ and invariant under reflections.
\end{theorem}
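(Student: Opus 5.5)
The plan is to reduce Theorem~\ref{mainThmB-logconcave} to Theorem~\ref{mainThmB}, in the same way Theorem~\ref{mainThmA-logconcave} was reduced to Theorem~\ref{mainThmA}. If $G(f)=\infty$ the inequality is trivial. Since $G$ may not attain both $0$ and $\infty$, we can define $\Psi\colon\Convc\to[-\infty,\infty]$ by
\[
\Psi(\psi)\coloneq\log G(e^{-\psi}),
\]
with the conventions $\log 0=-\infty$ and $\log\infty=\infty$. Then $\Psi$ takes values in $[-\infty,\infty)$ or in $(-\infty,\infty]$, so the convexity notion \eqref{convexity-def} is meaningful. We then check the three hypotheses of Theorem~\ref{mainThmB} for $\Psi$.

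First, rotation invariance: $e^{-\psi\circ\vartheta^{-1}}=f\circ\vartheta^{-1}$ for $f=e^{-\psi}$, and invariance of $G$ under all of $\SOn$ (which is closed under inverses) gives $\Psi(\psi\circ\vartheta^{-1})=\Psi(\psi)$. For $n=1$ the same argument works with $\Oo$.

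Second, convexity: by \eqref{inf-sup-convolutions}, $e^{-(\lambda\sq\psi_1\square(1-\lambda)\sq\psi_2)}=\lambda\cdot f_1\star(1-\lambda)\cdot f_2$. Log-convexity of $G$ gives $G(\lambda\cdot f_1\star(1-\lambda)\cdot f_2)\le G(f_1)^\lambda G(f_2)^{1-\lambda}$. Taking $\log$, which is increasing on $[0,\infty]$ with the stated conventions, yields \eqref{convexity-def} for $\Psi$. The degenerate cases where some value is $0$ or $\infty$ are handled directly by these conventions, since only one of the two extreme values can occur.

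Third, lower semicontinuity: if $\psi_j$ epi-converges to $\psi$, then $e^{-\psi_j}$ hypo-converges to $e^{-\psi}$, so $\liminf_j G(f_j)\ge G(f)$. Monotonicity and continuity of $\log$ on $[0,\infty]$ (in the extended sense) give $\liminf_j\Psi(\psi_j)\ge\Psi(\psi)$. This is the mirror image of the subsequence argument in the proof of Theorem~\ref{mainThmA-logconcave}. Here it is simpler, because $\log\colon[0,\infty]\to[-\infty,\infty]$ is a continuous increasing bijection, so no case split is really needed.

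Applying Theorem~\ref{mainThmB} gives $\Psi(\psi_\rot)\le\Psi(\psi)$. Since $f_\rot=e^{-\psi_\rot}$ by Definition~\ref{mainDef-3}, exponentiating gives $G(f_\rot)\le G(f)$. For $n=1$ the same reduction works, using reflection invariance and the one-dimensional part of Theorem~\ref{mainThmB}. The only mildly delicate step is the bookkeeping with the extended values $0$ and $\infty$ in the convexity and semicontinuity checks. Treating $\log$ as an order isomorphism $[0,\infty]\to[-\infty,\infty]$ should dispose of it cleanly.
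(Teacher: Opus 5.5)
Your proposal is correct and follows essentially the same route as the paper, which applies Theorem~\ref{mainThmB} to the functional $\psi\mapsto\log G(e^{-\psi})$ after observing that it is lower semicontinuous, convex and rotation invariant. Your explicit handling of the extended values $0$ and $\infty$, using that $\log\colon[0,\infty]\to[-\infty,\infty]$ is an increasing homeomorphism, fills in details that the paper leaves implicit.
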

\begin{proof}
Similar to the proof of Theorem~\ref{mainThmA-logconcave}, we consider $\psi\mapsto \log(G(e^{-\psi}))$, which defines a lower semicontinuous, convex functional on $\Convc$ that is invariant under rotations. The statement now follows from Theorem~\ref{mainThmB}.
\end{proof}

\begin{remark}
    The framework and extremal results developed here for coercive log-concave functions extend naturally to the broader class of $\alpha$-concave functions where $\alpha\in[-\infty,\infty]$. In this setting, $\alpha$-concavity is formulated in terms of the $\alpha$-means $M_\alpha^{(s,t)}(u,v)$ and $\alpha$-Asplund operations; see \cite{Rotem2013,Salani-2015}.  The proofs of the corresponding results for $\alpha$-concave functions follow along the same lines as those for log-concave functions, with the obvious modifications, and some additional bookkeeping to keep track of the parameter $\alpha$. Thus, to keep the exposition focused, and since the adaptations are straightforward, we do not include the $\alpha$-concave development here.\dssymb
\end{remark}


\section{Applications}\label{sec:applications}

In this section, we present some applications of our main results.

\subsection{A functional Urysohn-type inequality}

We establish the following functional version of Urysohn's inequality. We remark that this result can also be obtained from the techniques presented in \cite[Section 6]{Salani-2015} by Salani, who studied solutions to elliptic PDEs in convex sets. Further functional variants of Urysohn's inequality were obtained, for example, in \cite{BCF-2014,Hoehner-2023,Milman-Rotem,Rotem2012,Rotem2013}.

\begin{theorem}\label{urysohn-2}
    For every $f\in{\rm LC}_{\rm c}(\R^n)$ we have $J(f)\leq J(f_{\rm rot})$.
\end{theorem}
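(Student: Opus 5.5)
We derive this from Theorem~\ref{mainThmA-logconcave}, applied to the total mass functional $F=J$ with the trivial slope collection $\mathcal{Y}=\{\Rn\}$.

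First, the collection $\mathcal{Y}=\{\Rn\}$ is admissible for every $\psi\in\Convc$. Indeed, $\Rn$ contains the vertex set $Y_o$ of a small simplex $T$ with $o\in\interior(T)$ and $T\subseteq\interior(r(\psi)B_n)$. So $\mathcal{Y}\subseteq\Ycoll_\psi$ (cf.\ Lemma~\ref{le:existence_consequences}).

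Next, by \eqref{Y-equals-Rn} we have $q_{\psi\circ\vartheta^{-1},\Rn}=\psi\circ\vartheta^{-1}$. Since Lebesgue measure is rotation invariant, $J(e^{-\psi\circ\vartheta^{-1}})=J(f)$. Hence $J^*(f,\{\Rn\})=J(f)$, and likewise $J^*(f_\rot,\{\Rn\})=J(f_\rot)$. Once the hypotheses on $J$ are verified, Theorem~\ref{mainThmA-logconcave} gives $J(f)\le J(f_\rot)$ directly.

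It remains to check the three hypotheses on $J$.

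1. \emph{Log-concavity.} With respect to the Asplund sum, this is exactly the Pr\'ekopa--Leindler inequality quoted in Section~\ref{sec:log-concave}.

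2. \emph{Monotonicity.} This is trivial: $f\le g$ pointwise implies $J(f)\le J(g)$.

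3. \emph{Upper semicontinuity under hypo-convergence.} This is the only step needing care, and I expect it to be the main obstacle. Let $\psi_j\in\Convc$ epi-converge to $\psi\in\Convc$. We must show $\limsup_j J(e^{-\psi_j})\le J(e^{-\psi})$, and for this we need a uniform integrable majorant. We obtain it by reusing Lemma~\ref{le:br_infconv_ineq}: fix any $r>0$; for large $j$ this gives $\psi_j\ge\psi\infconv b_r(\psi)=:\varphi$.

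The function $\varphi$ lies in $\Convc$, because $o\in\interior(\dom\Leg\varphi)=\interior(C_r(\psi))$. A coercive convex function admits a bound $\varphi(x)\ge a|x|-b$ with $a>0$. Therefore $e^{-\psi_j}\le e^{-\varphi}\le e^{b-a|x|}$, which is integrable.

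For the pointwise behaviour, the first epi-convergence condition with the constant sequence $x_j=x$ gives $\limsup_j e^{-\psi_j(x)}\le e^{-\psi(x)}$ at every $x$. Reverse Fatou's lemma, applied with this dominating function, then yields the claimed upper semicontinuity. In fact continuity holds almost everywhere, but only the upper bound is needed. The possibility $J=\infty$ is excluded by the same majorant, and $J>0$ because $\dom\psi\neq\emptyset$, so the convention on the values $0$ and $\infty$ is respected.

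Alternatively, we could bypass Theorem~\ref{mainThmA-logconcave} and apply Theorem~\ref{mainThmB-logconcave} to $G=1/J$, which is log-convex by Pr\'ekopa--Leindler and rotation invariant. Its lower semicontinuity is equivalent to the same upper semicontinuity of $J$ established above, so this route gives the same result with the same verification.
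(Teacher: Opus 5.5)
Your proposal is correct and follows the paper's proof: you apply Theorem~\ref{mainThmA-logconcave} to $F=J$ with $\mathcal{Y}=\{\Rn\}$, using $q_{\psi\circ\vartheta^{-1},\Rn}=\psi\circ\vartheta^{-1}$ and the rotation invariance of $J$; the only difference is that you verify upper semicontinuity yourself (integrable majorant from Lemma~\ref{le:br_infconv_ineq} plus reverse Fatou), whereas the paper cites Lemma~\ref{J-cont-lem}. One harmless slip: $\dom\psi\neq\emptyset$ does not give $J(f)>0$ (e.g.\ $\psi=\ind_{\{o\}}$ yields $J(f)=0$), but since $J<\infty$ on $\LCc$, the convention on the values $0$ and $\infty$ is still respected.
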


For the proof of the above inequality, we need the following result, which is essentially a consequence of the dominated convergence theorem. See, for example, \cite[Lemma 16]{CLM-IMRN}, where a more general result was shown. See also \cite[Lemma 3.2]{Mussnig-Li}.

\begin{lemma}\label{J-cont-lem}
The total mass $J\colon\LCc\to[0,\infty)$ is continuous.
\end{lemma}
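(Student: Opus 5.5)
The statement to prove is Lemma~\ref{J-cont-lem}: continuity of $J$ with respect to hypo-convergence, i.e.\ epi-convergence of the exponents. The plan is to take $f_j=e^{-\psi_j}\in\LCc$ hypo-converging to $f=e^{-\psi}\in\LCc$, establish pointwise almost everywhere convergence $f_j\to f$, and then pass to the limit in the integral by dominated convergence. The heart of the argument is finding a single integrable majorant $e^{-(a|x|-b)}$ valid for all large $j$. This uniform coercivity estimate is the one delicate step.

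\textbf{Step 1: pointwise convergence.} First suppose $\interior(\dom\psi)\neq\emptyset$. Lemma~\ref{le:epi_conv_pointwise} gives $\psi_j\to\psi$ pointwise off $\partial\dom(\psi)$, and the boundary of a convex set is Lebesgue-null. Hence $f_j\to f$ almost everywhere.

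If $\dom\psi$ has empty interior, then $J(f)=0$, and the task is to show $J(f_j)\to 0$. I would handle this by a separate argument via level sets. The sublevel sets $\{\psi_j\le t\}$ converge in the Hausdorff sense to $\{\psi\le t\}$, which is lower-dimensional (the correspondence is cited after the definition of epi-convergence). Their volumes therefore tend to zero, while the domination from Step 2 controls the tails. Combined with the layer-cake formula, this gives $J(f_j)\to 0$.

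\textbf{Step 2: uniform coercivity, the main obstacle.} By Lemma~\ref{le:conj_coercive}, $o\in\interior(\dom\Leg\psi)$, so there is $\rho>0$ with $\rho B_n\subseteq\interior(\dom\Leg\psi)$. By Lemma~\ref{le:conj_continuous}, $\Leg\psi_j$ epi-converges to $\Leg\psi$. Lemma~\ref{le:epi_conv_pointwise} then gives uniform convergence of $\Leg\psi_j\to\Leg\psi$ on the compact set $\rho B_n$. So there is $M$ with $\Leg\psi_j\le M$ on $\rho B_n$ for all large $j$.

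The conjugate-duality (Fenchel--Young) inequality then gives
\[
\psi_j(x)\ge\sup_{|y|\le\rho}\big(\langle x,y\rangle-\Leg\psi_j(y)\big)\ge\rho|x|-M ,
\]
and the same bound holds for $\psi$. Thus $f_j(x)\le e^{M-\rho|x|}$, which is integrable on $\Rn$. This is exactly the kind of uniform control that the tools of Section~\ref{seq:topo} (compare Lemma~\ref{le:br_infconv_ineq}) are designed to give.

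\textbf{Step 3: conclusion.} With the almost-everywhere convergence from Step 1 and the majorant from Step 2, dominated convergence yields $J(f_j)\to J(f)$.
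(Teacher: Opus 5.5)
Your proof is correct and is essentially the argument the paper has in mind: the paper records Lemma~\ref{J-cont-lem} as a consequence of dominated convergence, deferring the details to \cite[Lemma 16]{CLM-IMRN} and \cite[Lemma 3.2]{Mussnig-Li}, and your uniform cone bound $\psi_j(x)\geq \rho|x|-M$ supplies the integrable majorant that argument needs; you obtain it from uniform convergence of $\Leg\psi_j$ on $\rho B_n\subseteq\interior(\dom(\Leg\psi))$. One simplification: the case $\interior(\dom(\psi))=\emptyset$ needs no level-set argument, because the $\liminf$ condition of epi-convergence along constant sequences gives $\psi_j(x)\to\infty$ for every $x\notin\dom(\psi)$, so $f_j\to f=0$ off the Lebesgue-null set $\dom(\psi)$ and the same dominated convergence step applies.
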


\begin{proof}[Proof of Theorem \ref{urysohn-2}]
    Let $F=J$ denote the total mass functional, $\mathcal{Y}=\{\Rn\}$, and fix $f=e^{-\psi}\in\LCc$. By Lemma \ref{J-cont-lem}, $F$ is continuous with respect to hypo-convergence, and it is log-concave by the Pr\'ekopa--Leindler inequality. Moreover, it is monotone. Hence, the hypotheses of Theorem \ref{mainThmA-logconcave} are satisfied. Note that by Lemma \ref{le:q_psi_rn_psi},
    \begin{equation*}
\psi\circ\vartheta^{-1} = q_{\psi\circ\vartheta^{-1},\Rn}
    \end{equation*}
    for every $\vartheta\in\On$, which, together with the rotation invariance of the total mass functional (or reflection invariance, in case $n=1$), shows that
    \begin{equation*}
    J(\exp(-q_{\psi,\Rn}))=F^*(f,\mathcal{Y}) \leq F^*(f_{\rot},\mathcal{Y}) = J(f_{\rot}).
    \end{equation*}
    Therefore, we obtain 
    \[
    J(f)=J(e^{-\psi})=J(\exp(-q_{\psi,\Rn}))\leq J(f_{\rot}).
    \]
\end{proof}

\begin{remark}
    Choosing
    \[
        f(x)=\chi_K(x)\coloneq\begin{cases}
            1\quad &\text{if } x\in K,\\
            0\quad &\text{else},
        \end{cases}
    \]    
    for some $K\in\Kn$ in Theorem \ref{urysohn-2}, we recover the classical Urysohn inequality \eqref{urysohn-ineq}:
    \[
\vol_n(K)=J(\chi_K)\leq J((\chi_K)_{\rot})=J(\chi_{K_{\rot}})=\vol_n(K_{\rot}).
    \]
    Here we used $(\chi_K)_{\rot}=\chi_{K_{\rot}}$.\dssymb
\end{remark}


\subsection{Approximation of functions by outer linearizations}
\label{suse:approx}
Throughout this section, we look at outer linearizations of convex functions (and subsequently log-concave functions) with a finite number of slopes. Given $\psi\in\Convc$ and $Y=\{y_1,\ldots,y_N\}$, $N\in\N$, we consider the piecewise affine function $q_{\psi,Y}$, which is called an \emph{outer linearization} of $\psi$. To ensure that the slopes of $Y$ actually make a contribution to $q_{\psi,Y}$, we will assume, in light of Lemma~\ref{le:vector_dom}, that
\[
Y\subseteq \dom(\Leg \psi).
\]
In addition, we will be interested in the case when $q_{\psi,Y}\in\Convc$. By Proposition~\ref{prop:C_intersection}, this is the case if and only if
\[
\pos(Y)=\Rn,
\]
which is possible only if
\[
N\geq n+1.
\]
Minimal sets $Y$ are, for example, given by the vertices of a simplex in $\dom(\Leg \psi)$ that contains the origin in its interior. For applications of outer linearizations to convex optimization and stochastic programming, we refer the reader to \cite{Bertsekas2011, BL-stochastic-book}.

Let us note that the situation here is somewhat more general than in the rest of this article, in that we do not necessarily require $Y$ to be a subset of the interior of $\dom(\Leg \psi)$. On the other hand, however, we only consider finite sets $Y$ in this subsection.

\medskip

Next, given $f=e^{-\psi}\in\LCc$, let $q_{\psi,Y}\in\Convc$ be an outer linearization of its convex base function $\psi\in\Convc$. We call the function $e^{-q_{\psi,Y}}$ an \emph{outer log-linearization} of $f$. An example is illustrated in Figure~\ref{fig:outer_log_lin}.

\begin{figure}[!ht]
\begin{center}
 \begin{tikzpicture}[scale=1.5]
\begin{axis}[
width=0.32\textwidth,
    axis lines = left,
    xlabel = {$x$},
    ylabel = {$\psi(x)=x^2$},
    ymin=-2, ymax=10,
    xtick={-2,-1,0,1,2},
    ytick={-2,0,2,4,6,8},
     x label style={at={(axis description cs:1,0)},anchor=west},
    y label style={at={(axis description cs:0,1)},rotate=270,anchor=south},
    label style={font=\tiny},
                    tick label style={font=\tiny},
]
\addplot [
    name path=parabola,
    domain=-3:3, 
    samples=100, 
    color=black,
]
{x^2};
\addplot[mark=*, mark size=0.75pt] coordinates {(-2,4)};
\addplot[mark=*, mark size=0.75pt] coordinates {(-1/2,1/4)};
\addplot[mark=*, mark size=0.75pt] coordinates {(3/4,9/16)};
\addplot[mark=*, mark size=0.75pt] coordinates {(3/2,9/4)};

        \addplot[name path=bluecurve,
        color=blue, 
        mark=none] coordinates {(-3,8)(-5/4,1)(1/4,-1/2)(5/4,3/2)(3,27/4)}; 

\end{axis}
\end{tikzpicture}
 \begin{tikzpicture}[scale=1.5]
\begin{axis}[
width=0.32\textwidth,
    axis lines = left, 
    xlabel = {$x$},
    ylabel = {$f(x)=e^{-x^2}$},
    ymin=0, ymax=2,
    xtick={-2,-1,0,1,2},
    ytick={0,0.5,1,1.5},
     x label style={at={(axis description cs:1,0)},anchor=west},
    y label style={at={(axis description cs:0,1)},rotate=270,anchor=south},
    label style={font=\tiny},
                    tick label style={font=\tiny},
]
\addplot [
    domain=-3:3, 
    samples=100, 
    color=black,
]
{exp(-x^2)};

\addplot[mark=*, mark size=.75pt] coordinates {(-2,0.01831563888)};
\addplot[mark=*, mark size=.75pt] coordinates {(-1/2,0.77880078307)};
\addplot[mark=*, mark size=.75pt] coordinates {(3/4,0.56978282473)};
\addplot[mark=*, mark size=.75pt] coordinates {(3/2,0.10539922456)};



\addplot [name path = arc1,
    domain=-3:-5/4, 
    samples=100, 
    color=red,
]
{exp(4*x+4)};

        \addplot [name path=arc2,
    domain=-5/4:1/4, 
    samples=100, 
    color=red,
]
{exp(x+1/4)};

\addplot [name path=arc3,
    domain=1/4:5/4, 
    samples=100, 
    color=red,
]
{exp(-2*x+1)};
\addplot [name path=arc4,
    domain=5/4:3, 
    samples=100, 
    color=red,
]
{exp(-3*x+9/4)};
\end{axis}
\end{tikzpicture}
\end{center}
\caption{\label{fig:outer_log_lin}On the left, an outer linearization of $\psi(x)=x^2$ with four slopes is shown in blue. On the right, the corresponding outer log-linearization of $f(x)=e^{-x^2}$ is shown in red.}
\end{figure}

For a given function $f=e^{-\psi}\in\LCc$, we want to consider the set of all integrable outer-log linearizations of $f$, generated from at most $N\geq n+1$ slopes. Thus, let
\[
\mathscr{P}_N(f)\coloneq\{e^{-q_{\psi,Y}} :\, Y\subseteq \dom(\Leg \psi),\, \pos(Y)=\Rn,\, |Y|\leq N\}.
\]
We will investigate the infimum of the total mass functional $J\colon\LCc\to{[0,\infty)}$ on this set, and therefore we define
\[
F_N(f)\coloneq\inf\nolimits_{p\in \mathscr{P}_N(f)} J(p).
\]
Observe that since trivially $f\leq p$ pointwise for every $p\in \mathscr{P}_N(f)$ and since the total mass functional is monotone increasing, we always have $J(f)\leq J(p)$ for every $p\in \mathscr{P}_N(f)$, and, therefore,
\[
J(f)\leq F_N(f).
\]
The quantity $F_N(f)$ is therefore an approximation of $J(f)$ through outer-log linearizations of $f$.

\begin{remark}
    The set of outer log-linearizations of a log-concave function $f$ with at most $N$ bounding log-affine hyperplanes may be regarded as the conceptual ``dual'' of the set of inner log-linearizations with at most $N$ break points in the hypograph of $f$. The latter construction was previously studied in \cite{Hoehner-2023,Hoehner-Novaes,PB-2020,Rinott}.\dssymb
\end{remark}

As an application of Theorem~\ref{mainThmA-logconcave}, we prove the following Urysohn-type inequality for the outer approximation of coercive log-concave functions.

\begin{theorem}
\label{thm:outer_log_urysohn}
	Let $n,N\in\N$ be such that $N\geq n+1$. For every $f\in\LCc$, we have
	\[
	F_{N}(f) \leq F_{N}(f_{\rot}).
	\]
\end{theorem}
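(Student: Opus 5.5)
The plan is to apply Theorem~\ref{mainThmA-logconcave} with $F=J$ and a suitable rotation-invariant collection of finite slope sets. After that, the only remaining work is to compare $F^*$ with $F_N$ on both sides. Fix $f=e^{-\psi}\in\LCc$ and define
\[
\mathcal{Y}\coloneq\{Y\subseteq \interior(r(\psi)B_n):\, |Y|\leq N,\ \pos(Y)=\Rn\}.
\]
Clearly $\mathcal{Y}\subseteq\Ycoll_\psi$, and $\mathcal{Y}$ is invariant under $\On$. We know that $J$ is hypo-continuous by Lemma~\ref{J-cont-lem}, log-concave by Pr\'ekopa--Leindler, and monotone increasing. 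Theorem~\ref{mainThmA-logconcave} therefore gives $J^*(f,\mathcal{Y})\leq J^*(f_\rot,\mathcal{Y})$.

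\emph{Removing the rotations.} Since $\Leg(\psi\circ\vartheta^{-1})=\Leg\psi\circ\vartheta^{-1}$, Lemma~\ref{le:vector_dom} gives $q_{\psi\circ\vartheta^{-1},Y}=q_{\psi,\vartheta^{-1}Y}\circ\vartheta^{-1}$. The total mass $J$ is rotation (resp.\ reflection) invariant, and $\mathcal{Y}$ is $\On$-invariant. Hence for $g=e^{-\varphi}\in\{f,f_\rot\}$ we get
\[
J^*(g,\mathcal{Y})=\inf_{Y\in\mathcal{Y}}J(e^{-q_{\varphi,Y}}).
\]

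\emph{Left-hand side.} Every $Y\in\mathcal{Y}$ satisfies $Y\subseteq\interior(r(\psi)B_n)\subseteq\dom(\Leg\psi)$, $\pos(Y)=\Rn$ and $|Y|\leq N$. Thus $e^{-q_{\psi,Y}}\in\mathscr{P}_N(f)$, and consequently $F_N(f)\leq J^*(f,\mathcal{Y})$.

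\emph{Right-hand side.} We must show $J^*(f_\rot,\mathcal{Y})\leq F_N(f_\rot)$, and this is the main obstacle: the sets admissible in $F_N(f_\rot)$ lie in $\dom(\Leg\psi_\rot)$, possibly on its boundary, and not necessarily in $\interior(r(\psi)B_n)$. The plan is to shrink such sets radially.

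\begin{enumerate}
\item Take any $Y\subseteq\dom(\Leg\psi_\rot)$ with $\pos(Y)=\Rn$ and $|Y|\leq N$.
\item Locate the domain. For $n\geq2$, the proof of Lemma~\ref{epi-symm-convergence} gives $\dom(\Leg\psi_\rot)\subseteq r(\psi)B_n$ (or $\Rn$ if $r(\psi)=\infty$). For $n=1$, $\dom(\Leg\psi_\rot)=\dom\Leg\psi\cap(-\dom\Leg\psi)$, which is likewise contained in $r(\psi)B_1$.
\item Shrink. For every $t\in(0,1)$ we then have $tY\subseteq\interior(r(\psi)B_n)$, $\pos(tY)=\Rn$ and $|tY|\leq N$, so $tY\in\mathcal{Y}$.
\item Pass to the limit. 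By Lemma~\ref{le:interior-slopes-dense} applied to $\psi_\rot$, $q_{\psi_\rot,tY}$ epi-converges to $q_{\psi_\rot,Y}$ as $t\to1^-$. Hence $e^{-q_{\psi_\rot,tY}}$ hypo-converges, and by Lemma~\ref{J-cont-lem}, $J(e^{-q_{\psi_\rot,tY}})\to J(e^{-q_{\psi_\rot,Y}})$.
\end{enumerate}
Therefore $J^*(f_\rot,\mathcal{Y})\leq J(e^{-q_{\psi_\rot,Y}})$ for every admissible $Y$. Taking the infimum over $Y$ yields $J^*(f_\rot,\mathcal{Y})\leq F_N(f_\rot)$.

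\emph{Conclusion.} Chaining the three inequalities gives
\[
F_N(f)\leq J^*(f,\mathcal{Y})\leq J^*(f_\rot,\mathcal{Y})\leq F_N(f_\rot).
\]
One should check that all quantities involved are finite, or else that the inequalities are trivial. This holds because each $q\in\Convc$ yields an integrable $e^{-q}$.
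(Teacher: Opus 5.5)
Your proposal is correct and follows the paper's proof step for step. You use the same collection $\mathcal{Y}_{\psi,N}$ of at most $N$ positively spanning slopes in $\interior(r(\psi)B_n)$, remove the rotations in the same way via invariance of $J$ and of that collection, and handle boundary slopes for $f_\rot$ with the same combination of Lemma~\ref{le:interior-slopes-dense} and Lemma~\ref{J-cont-lem}. The paper only asserts $F_N(f_\rot)=J^*(f_\rot,\mathcal{Y}_{\psi,N})$, while you spell out the needed inequality through the inclusion $\dom(\Leg\psi_\rot)\subseteq r(\psi)B_n$ and radial shrinking, which is precisely the justification that assertion relies on.
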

\begin{proof}
First, we consider the case $n\geq 2$. Let $f=e^{-\psi}\in\LCc$ be fixed and let
\[
\mathcal{Y}_{\psi,N}\coloneq\{Y\subseteq \interior(r(\psi) B_n):\, |Y|\leq N,\, \pos(Y)=\Rn\}\subseteq \Ycoll_\psi.
\]
Observe that
\[
\mathcal{Y}_{\psi,N}\subseteq \{Y\subseteq \dom(\Leg \psi) :\, |Y|\leq N,\, \pos(Y)=\Rn\},
\]
and that for $Y\in\mathcal{Y}_{\psi,N}$ and $\vartheta\in\SOn$, we also have
\begin{equation*}
\vartheta Y =\{\vartheta y:\, y\in Y\} \in \mathcal{Y}_{\psi,N}.
\end{equation*}
Thus, by the rotation invariance of $J$,
\begin{align*}
F_N(f)&=\inf\{J(e^{-q_{\psi,Y}}):\,Y\subseteq \dom(\Leg \psi),\, |Y|\leq N, \,\pos(Y)=\Rn \}\\
&\leq \inf\{J(e^{-q_{\psi,Y}}):\,Y\in \mathcal{Y}_{\psi,N}\}\\
&=\inf\{J(e^{-q_{\psi\circ \vartheta^{-1},\vartheta Y}}):\,\vartheta\in\SOn, Y\in \mathcal{Y}_{\psi,N}\}\\
&=\inf\{J(e^{-q_{\psi\circ \vartheta^{-1},Y}}):\, \vartheta\in\SOn, Y\in \mathcal{Y}_{\psi,N}\}\\
&= J^*(f,\mathcal{Y}_{\psi,N}).
\end{align*}
Next, since $\interior(\dom(\Leg \psi_\rot)) = \interior(r(\psi) B_n)$, it follows from
Lemma~\ref{J-cont-lem}, Lemma~\ref{le:interior-slopes-dense}, and the radial symmetry of $f_\rot=e^{-\psi_\rot}$, that
\[
F_N(f_\rot)=\inf\{J(e^{-q_{\psi_\rot \circ \vartheta^{-1},Y}}) :\, \vartheta\in\SOn, Y\in\mathcal{Y}_{\psi,N}\}\\
=J^*(f_\rot,\mathcal{Y}_{\psi,N}).
\]
Hence, Theorem~\ref{mainThmA-logconcave} shows that
\[
F_N(f)\leq J^*(f,\mathcal{Y}_{\psi,N})\leq J^*(f_\rot,\mathcal{Y}_{\psi,N}) = F_N(f_\rot).
\]
For $n=1$, the proof proceeds in the same way as above, except that we need to use $\Oo$ where appropriate.   
\end{proof}


\subsection{A covering result}\label{se:covering}

For a probability measure $\mu$ on $\Rn$, let $\supp(\mu)$ denote its support, meaning the complement of the largest open subset on which the measure vanishes. Assuming that $\supp(\mu)$ is compact, let $R_\mu\in[0,\infty)$ be given by
\[
R_\mu\coloneq \sup\nolimits_{y\in \supp(\mu)} |y|.
\]
For $\psi\in\Convc$ such that $\supp(\mu) \subseteq \dom (\Leg \psi)$, we consider
\[
\Phi_{\rm avg}(\psi,\mu)\coloneq \int_{\Rn} \Leg\psi(y) \dint\mu(y).
\]
Note that the above condition on $\supp(\mu)$ is fulfilled, in particular, when $r(\psi)>R_\mu$ (recall the definition of $r(\psi)$ in \eqref{eq:def_r_psi}).

We have the following consequence of Theorem~\ref{mainThmA}.

\begin{corollary}
\label{cor:covering_functions}
Let $n\geq 2$ and let $\mu$ be a probability measure with compact support on $\Rn$. If $\psi\in\Convc$ is such that $r(\psi)>R_\mu$, then there exists $\vartheta_o\in\SOn$ such that
\[
\Phi_{\rm avg}(\psi \circ \vartheta_o^{-1},\mu) = \min\nolimits_{\vartheta \in \SOn} \Phi_{\rm avg}(\psi \circ \vartheta^{-1},\mu) \leq \Phi_{\rm avg}(\psi_\rot,\mu).
\]
For $n=1$, the same statement holds if we consider $\Oo$ instead of $\SOo$.
\end{corollary}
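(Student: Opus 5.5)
Two separate facts are needed: the minimum over $\SOn$ is attained, and it is bounded by the value at $\psi_\rot$. For the second, apply Theorem~\ref{mainThmA} with a suitable functional $\Phi$ and a single slope set.

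\textbf{Existence of the minimum.} Fix $r$ with $R_\mu<r<r(\psi)$. Then $\Leg\psi$ is convex and finite on the open ball $\interior(r(\psi)B_n)$, hence continuous there. In particular it is continuous and bounded on the compact ball $rB_n$, which contains $\vartheta^{-1}\supp(\mu)$ for every $\vartheta\in\SOn$. Since $\Leg(\psi\circ\vartheta^{-1})(y)=\Leg\psi(\vartheta^{-1}y)$, dominated convergence shows that
\[
\vartheta\mapsto \Phi_{\rm avg}(\psi\circ\vartheta^{-1},\mu)
\]
is continuous on the compact group $\SOn$. So the infimum is a minimum, attained at some $\vartheta_o$.

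\textbf{The functional.} Set $\Phi(\varphi)\coloneq-\int\Leg\varphi\dint\mu$ whenever $\supp(\mu)\subseteq\dom(\Leg\varphi)$, and $\Phi(\varphi)\coloneq-\infty$ otherwise. Its values lie in $[-\infty,\infty)$, because $\Leg\varphi$ is l.s.c.\ and proper and hence bounded below on compact sets. Take $\mathcal{Y}=\{Y\}$ with $Y\coloneq\supp(\mu)\cup Y_1$, where $Y_1$ is the vertex set of a simplex with $o$ in its interior and contained in $\interior(r(\psi)B_n)$. Then $Y\subseteq\interior(r(\psi)B_n)$, so $\mathcal{Y}\subseteq\Ycoll_\psi$.

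By Lemma~\ref{le:vector_dom}, for $y\in Y\cap\dom(\Leg\varphi)$ the epigraph description gives $\Leg q_{\varphi,Y}(y)\le\Leg\varphi(y)$. On the other hand $q_{\varphi,Y}\le\varphi$ gives $\Leg q_{\varphi,Y}\ge\Leg\varphi$. Hence $\Leg q_{\varphi,Y}=\Leg\varphi$ on $Y$, and in particular on $\supp(\mu)$. Therefore $\Phi(q_{\psi\circ\vartheta^{-1},Y})=-\Phi_{\rm avg}(\psi\circ\vartheta^{-1},\mu)$, and likewise for $\psi_\rot$, whose conjugate is finite on $\interior(r(\psi)B_n)$ by Lemma~\ref{le:conj_rot_epi-symm}. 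Consequently
\[
\Phi^*(\psi,\mathcal{Y})=-\min_\vartheta\Phi_{\rm avg}(\psi\circ\vartheta^{-1},\mu).
\]
For $\psi_\rot$, note that $\Leg\psi_\rot$ is rotation invariant, so $\Phi^*(\psi_\rot,\mathcal{Y})=-\Phi_{\rm avg}(\psi_\rot,\mu)$. The claimed inequality is then exactly the conclusion of Theorem~\ref{mainThmA}.

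\textbf{Hypotheses of Theorem~\ref{mainThmA}.} Concavity of $\Phi$ follows from Lemma~\ref{le:infconv_conj} together with $\Leg(\lambda\sq\varphi)=\lambda\Leg\varphi$: $\Phi$ is linear on the set where it is finite, and it is $-\infty$ otherwise. Monotonicity holds because $\varphi_1\ge\varphi_2$ implies $\Leg\varphi_1\le\Leg\varphi_2$. Upper semicontinuity is the main obstacle. If $\supp(\mu)$ meets the boundary of the domain of the limit function, then pointwise convergence of the conjugates fails there.

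I would avoid global upper semicontinuity and use Remark~\ref{re:semicontinuity} instead: it suffices to have upper semicontinuity along $q_{(\psi_\rot\infconv b_r(\psi_\rot))\circ\vartheta^{-1},Y}$ as $r\to0^+$. By the proof of Lemma~\ref{le:p_psi_infconv_epi-conv}, the conjugates of these functions are at least $\Leg q_{\psi_\rot\circ\vartheta^{-1},Y}+r$. So $\Phi$ along this sequence is at most $\Phi(q_{\psi_\rot\circ\vartheta^{-1},Y})-r$ (or $-\infty$), and the required $\limsup$ inequality follows immediately.

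For $n=1$ the same argument works with $\Oo$ in place of $\SOn$. The minimum over the finite group $\Oo$ is trivially attained, and $\psi_\rot$ is even, so the identification of $\Phi^*(\psi_\rot,\mathcal{Y})$ goes through unchanged.
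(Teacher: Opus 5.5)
There is a genuine gap. Your functional has the wrong sign, and this breaks the application of Theorem~\ref{mainThmA} in two places.

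First, for $\Phi(\varphi)=-\int\Leg\varphi\dint\mu$, the implication $\varphi_1\ge\varphi_2\Rightarrow\Leg\varphi_1\le\Leg\varphi_2$ gives $\Phi(\varphi_1)\ge\Phi(\varphi_2)$. So your $\Phi$ is monotone \emph{increasing}, whereas the theorem requires $\Phi(\varphi_1)\le\Phi(\varphi_2)$ whenever $\varphi_1\ge\varphi_2$. Second, the conclusion is misread. We have $\inf_\vartheta\bigl(-\Phi_{\rm avg}(\psi\circ\vartheta^{-1},\mu)\bigr)=-\max_\vartheta\Phi_{\rm avg}(\psi\circ\vartheta^{-1},\mu)$, not $-\min_\vartheta$. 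Even granting your identification, $\Phi^*(\psi,\mathcal{Y})\le\Phi^*(\psi_\rot,\mathcal{Y})$ would read $\min_\vartheta\Phi_{\rm avg}(\psi\circ\vartheta^{-1},\mu)\ge\Phi_{\rm avg}(\psi_\rot,\mu)$, which is the reverse of the claim. So the statement is not "exactly the conclusion of Theorem~\ref{mainThmA}".

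The correct choice, as in the paper, is $\Phi(\varphi)=\int\Leg\varphi\dint\mu$, set to $+\infty$ if $\supp(\mu)\not\subseteq\dom(\Leg\varphi)$. This $\Phi$ is linear and monotone decreasing. With your $Y=\supp(\mu)\cup Y_1$ and your (correct) observation that $\Leg q_{\varphi,Y}=\Leg\varphi$ on $Y\cap\dom(\Leg\varphi)$, it gives $\Phi^*(\psi,\mathcal{Y})=\min_\vartheta\Phi_{\rm avg}(\psi\circ\vartheta^{-1},\mu)$.

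With this sign, however, your one-line upper semicontinuity argument collapses. The estimate $\Leg q_{(\psi_\rot\infconv b_r(\psi_\rot))\circ\vartheta^{-1},Y}\ge\Leg q_{\psi_\rot\circ\vartheta^{-1},Y}+r$ now bounds $\Phi$ only from \emph{below}. The missing upper bound comes from the fact that $C_r(\psi_\rot)$ is a centered ball whose radius tends to $r(\psi)$ as $r\to0^+$. Hence the compact set $Y\subseteq\interior(r(\psi)B_n)$ lies in it for all small $r$. Your equality argument then yields $\Leg q_{(\psi_\rot\infconv b_r(\psi_\rot))\circ\vartheta^{-1},Y}=\Leg\psi_\rot+r$ on $\supp(\mu)$. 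So $\Phi$ equals $\Phi_{\rm avg}(\psi_\rot,\mu)+r$ along the family, which is what Remark~\ref{re:semicontinuity} needs; the paper argues the same way with $\mathcal{Y}=\{\Rn\}$.

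Note also that the inequality needs none of this machinery. Since $(\vartheta,y)\mapsto\Leg\psi(\vartheta^{-1}y)$ is continuous and bounded on $\SOn\times R_\mu B_n$, Lemma~\ref{le:conj_rot_epi-symm} and Fubini give $\Phi_{\rm avg}(\psi_\rot,\mu)=\int_{\SOn}\Phi_{\rm avg}(\psi\circ\vartheta^{-1},\mu)\dint\vartheta$, which is at least the minimum (average over $\Oo$ for $n=1$). Your attainment argument via continuity on the compact group $\SOn$ is correct and matches the paper's.
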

\begin{proof}
Let $\psi\in\Convc$ with $r(\psi)>R_\mu$ be fixed throughout the proof. We consider the functional $\Phi\colon\Convc\to(-\infty,\infty]$, given by
\[
\Phi(\varphi)\coloneq \Phi_{\rm avg}(\varphi,\mu)
\]
for $\varphi\in \Convc$.

It follows from Lemma~\ref{le:infconv_conj} that $\Phi$ is linear, and thus concave with respect to infimal convolution. If $\varphi_1\geq \varphi_2$ pointwise, then $\Leg \varphi_1 \leq \Leg \varphi_2$, and therefore, $\Phi(\varphi_1)\leq \Phi(\varphi_2)$, which shows that $\Phi$ is monotone decreasing. Now let $\varphi\in\Convc$ be such that $r(\varphi)>R_\mu$ and let
\[
M_{\varphi,\mu}\coloneq \max\nolimits_{y\in R_\mu B_n} |\Leg \varphi(y)|,
\]
which is finite since $\Leg \varphi$ is finite in a neighborhood of $R_\mu B_n$. Next let $\varphi_j\in\Convc$, $j\in\N$, be epi-convergent to $\varphi$ as $j\to\infty$. It follows from Lemma~\ref{le:epi_conv_pointwise} and Lemma~\ref{le:conj_continuous} that $\Leg \varphi_j$ converges uniformly to $\Leg \varphi$ on $R_\mu B_n$ and thus,
\[
|\Leg \varphi_j(y)|\leq M_{\varphi,\mu}+1
\]
for every $y\in R_\mu B_n$ and every $j\geq j_o$ with some $j_o\in\N$. Considering that $\mu$ is a probability measure, it now follows from the dominated convergence theorem that
\[
\lim\nolimits_{j\to\infty} \Phi(\varphi_j)=\lim\nolimits_{j\to\infty}\int_{\Rn} \Leg \varphi_j(y)\dint \mu(y) = \int_{\Rn} \Leg\varphi(y)\dint \mu(y) = \Phi(\varphi).
\]
In particular, since $r(\psi_\rot)=r(\psi)>R_\mu$, it follows from the definition of $b_r(\psi_\rot)$ that also $r(\psi \infconv b_r(\psi_\rot))>R_\mu$ for every sufficiently small $r$. Together with Lemma~\ref{le:q_psi_rn_psi} and Lemma~\ref{le:br_infconv_seq} this shows
\begin{align*}
\lim\nolimits_{r\to 0^+} \Phi(q_{(\psi_\rot \infconv b_r(\psi_\rot))\circ\vartheta^{-1},\Rn})&=
\lim\nolimits_{r\to 0^+} \Phi((\psi_\rot\infconv b_r(\psi_\rot))\circ\vartheta^{-1})\\
&=\Phi(\psi_\rot \circ\vartheta^{-1})\\
&=\Phi(q_{\psi_\rot \circ \vartheta^{-1},\Rn})
\end{align*}
for every $\vartheta\in\On$. Thus, choosing $\mathcal{Y}=\{\Rn\}$, it follows from Theorem~\ref{mainThmA} and Remark~\ref{re:semicontinuity} that
\begin{align}
\label{eq:inf_phi_avg_ineq}
\inf\left\{\Phi_{\rm avg}(\psi\circ \vartheta^{-1},\mu) :\, \vartheta\in \SOn \right\} 
&=\inf\left\{\Phi(q_{\psi\circ \vartheta^{-1},\Rn}) :\, \vartheta\in \SOn \right\}\\
&=\Phi^*(\psi,\{\Rn\})\notag\\
&\leq \Phi^*(\psi_\rot,\{\Rn\})\notag\\
&=\Phi_{\rm avg}(\psi_\rot,\mu),\notag
\end{align}
where we have used Lemma~\ref{le:q_psi_rn_psi}. The case $n=1$ is handled in the same way, only now we need to consider $\Oo$ in the infimum.

It remains to show that the infimum in \eqref{eq:inf_phi_avg_ineq} is attained. Note that this is trivially true for $n=1$, and we will therefore assume without loss of generality that $n\geq 2$ in the following. Since $\Leg\psi$ is continuous on $R_\mu B_n$, the map
\[
\vartheta \mapsto \Leg\psi(\vartheta^{-1} y),\quad \vartheta\in\SOn,
\]
is continuous for every $y\in R_\mu B_n$. Considering that $|\Leg\psi|$ is bounded on $R_\mu B_n$ by $M_{\psi,\mu}$, and that $\mu$ is a probability measure, it follows from the dominated convergence theorem that
\[
\vartheta \mapsto \int_{\Rn} \Leg \psi(\vartheta^{-1}y)\dint\mu(y) = \Phi_{\rm avg}(\psi\circ \vartheta^{-1},\mu)
\]
is continuous on $\SOn$. Since $\SOn$ is compact, this shows that the minimum in \eqref{eq:inf_phi_avg_ineq} is attained.
\end{proof}

We will now explain how Corollary~\ref{cor:covering_functions} serves as a covering result, assuming for simplicity that $n \geq 2$ in the following exposition. Given a probability measure $\mu$ with compact support and $\psi\in\Convc$ such that $r(\psi)>R_\mu$, let $\vartheta_o\in\SOn$ be a minimizing rotation as in the statement of Corollary~\ref{cor:covering_functions}. We now define $p_{\psi}\colon \Rn\to\R$ as
\[
p_{\psi}\coloneq q_{\psi\circ \vartheta_o^{-1},\supp(\mu)}=\sup\nolimits_{y\in \supp(\mu)} \ell_{\psi\circ \vartheta_o^{-1},y},
\]
where we note that $p_\psi$ does not attain $\pm \infty$ since $r(\psi)>R_\mu$ ensures that $y\in \dom(\Leg(\psi\circ \vartheta^{-1}))$ for every $y\in\supp(\mu)$ and $\vartheta\in\SOn$.
By Lemma~\ref{le:vector_dom}, this means that
\[
\epi(\Leg p_{\psi}) = \operatorname{cl}\big(\conv{(y,t)\in\Rn\times\R:\, y\in \supp(\mu) \text{ s.t. } \Leg \psi(\vartheta_o^{-1} y)\leq t}\big).
\]
Since $\Phi_{\rm avg}(\psi\circ \vartheta_o^{-1},\mu)=\Phi_{\rm avg}(p_{\psi},\mu)$ by construction, it follows that for a given $\psi$ and a prescribed set of admissible slopes $Y=\supp(\mu)$, there exists a rotation $\vartheta_o$ such that the outer linearization $p_\psi$ of $\psi\circ \vartheta_o^{-1}$, generated by slopes in $Y$, has average intercept data, weighted according to $\mu$, not exceeding the rotation-invariant benchmark $\Phi_{\rm avg}(\psi_\rot,\mu)$. In other words, among all rotated halfspace covers of $\psi$ formed from the prescribed slope set $Y$, there exists one whose $\mu$-weighted average intercept data is controlled by the rotation-invariant reference function $\psi_\rot$.

\begin{remark}
Of particular interest is the case when $\mu$ is a discrete measure, in which case $p_\psi$ is a polyhedral function. We will demonstrate how this yields, notably, a covering result by Firey and Groemer \cite{Firey-Groemer-1964}, as formulated by Schneider \cite{Schneider-1967}. 

Let $\mu=\sum_{i=1}^m \lambda_i \delta_{u_i}$, where $\lambda_1,\ldots,\lambda_m\geq 0$ are such that $\lambda_1+\cdots+\lambda_m=1$ and $\delta_{u_i}$ denotes the Dirac measure concentrated on $u_i\in\Sp$, $1\leq i\leq m$. Next, let $K\in\Kn$ be a convex body containing the origin and let $\psi=\ind_K$. We now have
\[
\Phi_{\rm avg}(\psi,\mu) =\sum_{i=1}^m \lambda_i h_K(u_i).
\]
Furthermore, since
\[
(\ind_K)_{\rot}=\ind_{\frac 12 w(K) B_n},
\]
it follows from Corollary~\ref{cor:covering_functions} that there exists a rotation $\vartheta_o\in\SOn$ such that
\[
\sum_{i=1}^m \lambda_i h_{\vartheta_o K}(u_i) = \sum_{i=1}^m \lambda_i h_K(\vartheta_o^{-1} u_i) \leq \sum_{i=1}^m\lambda_i h_{\frac 12 w(K) B_n}(u_i) = \frac{w(K)}{2}.
\]
Now let $P=\{x:\, p_{\psi}(x)\leq 0\}$. Then $P$ is a polyhedron whose outer facet normals belong to $\{u_1,\ldots,u_m\}$, and $h_P(u_i)=h_{\vartheta_o K}(u_i)$ for the active facets. Note that this implies 
\[
\Phi_{\rm avg}(\ind_P,\mu)=\sum_{i=1}^m \lambda_i h_P(u_i) = \sum_{i=1}^m \lambda_i h_{\vartheta_o K}(u_i)\leq \frac{w(K)}{2}.
\]
Thus, among the polyhedra with facets orthogonal to the $u_i$, we have found one, namely $P$, that contains a rotated copy of $K$ and whose ``size'', given through $\Phi_{\rm avg}(\ind_P,\mu)$, is bounded from above by $w(K)/2$.
\dssymb
\end{remark}


\section*{Acknowledgments}
The authors wish to thank Aris Daniilidis, Liran Rotem, and Jacopo Ulivelli for helpful comments and remarks. Fabian Mussnig was supported, in part, by the Austrian Science Fund (FWF): 10.55776/P36210.



\vspace{3mm}

\noindent {\sc Department of Mathematics \& Computer Science, Longwood University}

\noindent {\it E-mail address:} {\tt hoehnersd@longwood.edu}

\medskip

\noindent {\sc Mathematics Department, University of Salzburg}

\noindent {\it E-mail address:} {\tt fabian.mussnig@plus.ac.at}

\end{document}